\documentclass{article}
\usepackage[utf8]{inputenc}
\usepackage{amsmath}
\usepackage{amssymb}
\usepackage{amsthm}
\usepackage{graphicx}
\usepackage{biblatex}
\usepackage{caption}
\usepackage{authblk}

\newtheorem{theorem}{Theorem}[section]
\newtheorem{lemma}[theorem]{Lemma}
\newtheorem{proposition}[theorem]{Proposition}

\newtheorem{definition}{Definition}[section]

\newtheorem{question}[theorem]{Question}

\begin{document}

\title{Rigidity of multisections to the universal cubic plane curve}

\author[*]{Jinwen Yao}

\affil[*]{Qiuzhen College, Tsinghua University

Email address: yaojw24@mails.tsinghua.edu.cn}

\renewcommand*{\Affilfont}{\small}

\maketitle

\begin{abstract}
   We study the following question of Farb: For which positive integers $n$ is it possible to choose $n$ distinct points continuously on every smooth cubic plane curve? This paper is a continuation of the work of Banerjee-Chen. We prove that such choice is unique up to homotopy for $n=27$ (smallest open case), and does not exist for $n=36k+18$.
\end{abstract}

\section{Introduction}

It is well known that every nonsingular cubic curve has precisely 9 inflection points (first attributed to Maclaurin; see Introduction of \cite{Hesse_pencil} for a brief history) and 27 sextatic points (first studied by Cayley in \cite{Cayley}). Motivated by these classical constructions from algebraic geometry, Benson Farb asked the following question: 

\begin{question}[A special case of Question 3.1 in \cite{Farb}]
Is it possible to choose in a continuously varying manner an unordered n-tuple of distinct points on every smooth cubic curve in $\mathbb{CP}^2$?
\end{question}

We now make the question precise. Let $\mathcal{X}$ denote the space of all nonsingular cubic curves,  in other words,
$$ \mathcal{X} = \{F(x,y,z) | F \ \mathrm{is \  a  \ nonsingular \ homogeneous \ polynomial \ of \ degree} \ 3 \} / \mathbb{C}^{*}. $$

Each $F \in \mathcal{X}$ defines a nonsingular cubic curve
$C_F = \{p \in \mathbb{CP}^2 \ | \ F(p) = 0 \} \subset \mathbb{CP}^2$. Let \textit{the universal cubic plane curve} be a fiber bundle $\pi: E \rightarrow \mathcal{X}$ over $\mathcal{X}$ whose fiber at point $F$ is $C_F$. For any positive integer $n$, we have a fiber bundle $\pi_n: E_n \rightarrow \mathcal{X}$, where the fiber at point $F \in \mathcal{X}$ is $\mathrm{UConf}_n(C_F)$, where $\mathrm{UConf}_n(C_F)$ is the unordered configuration space, or precisely, 

$$ \mathrm{UConf}_n(C_F) = \{ \, (x_1, \dots, x_n) \in (C_F)^n \ | \, x_i \neq x_j, \forall i \neq j \, \}  / S_n.$$
A \textit{multisection} of degree $n$ is a section of $\pi_n: E_n \rightarrow \mathcal{X}$. 

In general, we know from algebraic geometry that there exist multisections  $\mathcal{X}$ of degree $n$ for some integers $n$. By works of Maclaurin (see e.g. \cite{Hesse_pencil}), Cayley \cite{Cayley} and Gattazzo \cite{Gattazzo}, we know that there exist multisections of degree $n$ when 

$$
n = 9 \sum_{m \in I} J_2(m), \ \ \ \ \   J_2(m) = m^2 \prod_{p | m, \, p \, \mathrm{prime}} (1 - p^{-2}), 
$$
and $I$ is a set of positive integers. These multisections are constructed as follows: Every nonsingular cubic curve $C_F$ defined by $F=0$ is a Riemann surface of genus 1, which is a torus. If we choose an inflection point as the identity, then $C_F$ is an abelian group homomorphic to $(\mathbb{R}/\mathbb{Z})^2$. We can select $J_2(m)$ points for each $C_F$ by selecting all points $v$ such that $3v$ is an $m$-torsion. The set of points selected in such a way is well-defined, since different choices of inflection points differ by a 3-torsion. Moreover, we can take the disjoint union of the multisections corresponding to $m$ for every $m \in I$. 
In particular, let $I=\{1\}$ and we get the multisection containing all the inflection points, denoted by $\sigma_{\mathrm{flex}}$, which is of degree $9J_2(1)=9$; let $I=\{2\}$ and we get the multisection containing all the sextatic points, denoted by $\sigma_{\mathrm{sext}}$, which is of degree $9J_2(2)=27$.

\vspace{2mm}

Below is a summary that lists previous work on this question: 
\begin{enumerate}
    \item In \cite{Chen_2018}, Chen proved that if there exists a multisection of degree $n$, then $n$ is divisible by 9.
    \item Later in \cite{B_Chen}, Banerjee and Chen proved that multisections of degree 9 are homotopic to $\sigma_{\mathrm{flex}}$, and multisections of degree 18 do not exist. 
    \item However, also in \cite{B_Chen}, Banerjee and Chen gave a list of $n$ where there exists a multisection of degree $n$, but not from the above constructions. The list contains 216, 225, 243, 252, 288, 297, etc. In particular, from Theorem 1.9 in \cite{B_Chen}, we know that there exists a multisection of degree $n$ for $n= 108k+m$, where $k \in \mathbb{Z}_{\geq 0}$, and $m \in \{0,9,27,36,72,81,99\}$.  
    \item In the Appendix of \cite{B_Chen}, they recorded the proof of McMullen that any \textbf{algebraic} multisection must be from the known algebraic structures.

\end{enumerate}

This paper carries on the work of \cite{B_Chen}, using similar notations and methods. In this paper, we prove the following three theorems.
\begin{theorem}
Every degree 27 multisection is homotopic to $\sigma_\mathrm{sext}$.
\end{theorem}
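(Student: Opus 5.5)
We reduce the homotopy classification of degree $27$ multisections to a group-theoretic classification, following the Banerjee--Chen method. Fix a basepoint $F_0\in\mathcal{X}$ with fiber $C=C_{F_0}$, a torus. The monodromy of $\pi$ factors through an action of $\pi_1(\mathcal{X})$ on $H_1(C)$ and on $C$ up to isotopy. Since the family has a distinguished affine-linear structure coming from the flex points, this gives the affine group $\Gamma=(\mathbb{Z}/3)^2\rtimes \mathrm{SL}_2(\mathbb{Z})$ (or the relevant finite-index image) acting on $C\cong \mathbb{R}^2/\mathbb{Z}^2$ by affine maps. We use the principle from \cite{B_Chen}: the total space $E$ is homotopy equivalent to a Borel-type construction in which the fiber $C$ is a $K(\pi,1)$. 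Consequently, a multisection of degree $n$ up to homotopy corresponds to a $\pi_1(\mathcal{X})$-invariant conjugacy class of splittings, namely a finite $\Gamma$-orbit structure on $C$ up to fiberwise homotopy. Concretely, we work in the affine model. A continuous multisection restricted to the fiber gives a $\pi_1(\mathcal{X})$-equivariant unordered configuration, and homotopy classes correspond to lifts of $\pi_1(\mathcal{X})\to \pi_1$ of the configuration bundle.

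Key steps, in order:

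(1) Decompose a degree $27$ multisection into orbits. Any multisection splits, after passing to monodromy, into $\pi_1(\mathcal{X})$-orbits of points. Each orbit gives a multisection of smaller degree. By Chen's theorem every such sub-multisection has degree divisible by $9$. So the orbit sizes form a partition of $27$ into parts from $\{9,18,27\}$. By \cite{B_Chen} there is no multisection of degree $18$. Degree-$9$ pieces are homotopic to $\sigma_{\mathrm{flex}}$, and two disjoint copies cannot coexist, since a second degree-$9$ piece disjoint from the first would also have to be homotopic to $\sigma_{\mathrm{flex}}$. To rule this out, we would show that the "$3v=0$" rigidity forces the two pieces to coincide at the level of the stabilizer computation. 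This leaves two cases: a single transitive orbit of size $27$, or $9+9+9$. The latter we exclude by the same argument.

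(2) Transitive case. An orbit of size $27$ corresponds to a finite-index subgroup $H\subset \pi_1(\mathcal{X})$ of index $27$, together with a fixed point (up to homotopy) of $H$ acting on the fiber. The affine map induced on $C$ must have a fixed point in its action. By Lefschetz, a fixed point class of an affine map $x\mapsto Ax+b$ on the torus exists iff $\det(I-A)\ne 0$, or the relevant coset condition holds. Using the known presentation of the monodromy (a central extension of $\mathrm{SL}_2(\mathbb{Z}/3)\ltimes(\mathbb{Z}/3)^2$-type data used in \cite{B_Chen}), we enumerate the index-$27$ subgroups whose elements all admit a common fixed point class. We then show the resulting point set is $\{v: 3v\in C[2]\setminus\{0\}\}$, i.e. $\sigma_{\mathrm{sext}}$.

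(3) Homotopy uniqueness. We show that the fixed point class of $H$ is unique, which is where Nielsen theory on the torus enters: fixed point classes of $x\mapsto Ax+b$ are indexed by $\mathrm{coker}(I-A)$. We intersect these over generators of $H$, and we also check that the $H$-fixed lift is unique up to the $27$ choices permuted by $\pi_1(\mathcal{X})$.

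The main obstacle is step (2): classifying index-$27$ subgroups of the monodromy group and checking the common-fixed-point condition. We would first pass to the finite quotient $\mathrm{ASL}_2(\mathbb{Z}/6)$-type action, because only $2$- and $3$-torsion can appear. There we enumerate orbits of size $27$ on torsion points and compare them with the $J_2$-count, which shows that only the orbit of points $v$ with $3v$ of exact order $2$ has size $27$.
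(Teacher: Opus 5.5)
There is a genuine gap in your steps (2)--(3). Fixed-point classes of the affine action of $H$ on a single torus fiber are the same data as lifts of $\pi_1(\tilde{\mathcal{X}}_{27})/Z(K)$ to $\pi_1(E)/Z(K)\cong\mathbb{Z}^2\rtimes\mathrm{SL}_2(\mathbb{Z})$ up to conjugacy, which form a torsor under $H^1(\Gamma_{27};\mathbb{Z}^2)$. This data classifies homotopy classes of lifts $s:\tilde{\mathcal{X}}_{27}\to E$ of $p_{27}$. It cannot detect whether a class contains an \emph{injective} lift, and injectivity is the entire content of being a multisection.

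Concretely, $\Gamma_{27}$ has index $3$ in $\mathrm{SL}_2(\mathbb{Z})$, so it contains $-I$, because a subgroup avoiding $-I$ has index divisible by $4$. Hence $\tilde{\mathcal{X}}_{27}$ is a $3$-sheeted cover $f$ of $\tilde{\mathcal{X}}_{\mathrm{flex}}$, and $s_{\mathrm{flex}}\circ f$ is a legitimate lift. So the flex class is a common fixed-point class for every such $H$:
\begin{itemize}
\item For $\Gamma_{27}=\Gamma_{\mathrm{sext}}$ there are two classes, since $H^1=\mathbb{Z}/2$, so the uniqueness you assert in (3) is false.
\item For the normal index-$3$ subgroup, which surjects onto $\mathrm{SL}_2(\mathbb{Z}/2)$ so that $H^1=0$, the flex class is the \emph{only} one.
\end{itemize}
Lefschetz/Nielsen theory cannot exclude these classes. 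Your count of size-$27$ orbits on torsion points in an $\mathrm{ASL}_2(\mathbb{Z}/6)$-model only reproduces McMullen's classification of \emph{algebraic} multisections: an injective lift in the class of $s_{\mathrm{flex}}\circ f$ would have no reason to consist of torsion points. Your opening paragraph does mention splittings into $\pi_1$ of the configuration bundle, which would see injectivity. But steps (2)--(3) only ever use single-point data on $T^2$, which forgets it.

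The missing ingredient is an obstruction to homotoping $s_{\mathrm{flex}}\circ f$ to an injective map when $\deg f=m\geq 2$. The paper obtains it from braids.
\begin{itemize}
\item Straightening the homotopy $s_{\mathrm{flex}}\circ f\simeq s$ to geodesics gives $m$ distinct tangent vectors at each flex, i.e.\ an $m$-point configuration in the line bundle $\xi_{\mathrm{flex}}$.
\item Over a loop $\gamma$ with $\rho(\gamma)=-I$ this configuration yields a braid $b\in Br_m$. It is pure, because $-I$ is central and lies in $\Gamma_{27}$, so every lift of $\gamma$ closes up.
\item Since $\gamma^2$ is nullhomotopic and $-I$ negates the geodesic velocity vectors, one can trivialize $\gamma^*\xi_{\mathrm{flex}}$ so that its square is the nullhomotopy trivialization twisted once. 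This forces $b^2=\Delta^2$.
\item Every square root of $\Delta^2$ is conjugate to $\Delta$, which is not pure for $m\geq 2$, a contradiction.
\end{itemize}
Only after this is the class $[s]\neq 0$ left. There $\Gamma_{27}\subset\Gamma_{\mathrm{sext}}$ with equal index makes $f$ an isomorphism, and one still needs $[s_{\mathrm{sext}}]\neq 0$ to identify that class with $\sigma_{\mathrm{sext}}$.

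Your step (1) is fixable, but not by ``$3v=0$ rigidity''. Two disjoint degree-$9$ pieces are both homotopic to $\sigma_{\mathrm{flex}}$, so no stabilizer computation separates or identifies them. Instead, note that any two of them already form a degree-$18$ multisection, which Banerjee--Chen rule out; this is where $e(\xi_{\mathrm{flex}})\neq 0$ enters.
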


For two positive integers $n' \leq n$, we say that a multisection $\sigma_n: \mathcal{X} \rightarrow E_n$ of degree $n$ \textit{contains} another multisection $\sigma_{n'}: \mathcal{X} \rightarrow E_{n'}$ of degree $n'$, if $\sigma_{n'}(F) \subset \sigma_n(F)$ for every point $F \in \mathcal{X}$, where $\sigma_n(F) =\{x_1, \dots x_n\}$ and $\sigma_{n'}(F)=\{y_1, \dots, y_{n'}\}$ are treated as subsets of $C_F$. A multisection is \textit{indecomposable} if it does not contain any nonzero multisections other than itself.

\begin{theorem}
There does not exist any indecomposable multisection other than $\sigma_\mathrm{flex}$ or $\sigma_\mathrm{sext}$ up to homotopy, unless its degree can be divided by 36.
\end{theorem}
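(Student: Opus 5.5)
The plan is to use the Banerjee--Chen reduction of multisections to group theory. By \cite{B_Chen}, homotopy classes of degree $n$ multisections correspond to orbit data of the affine action. One has $\pi_1(\mathcal{X})$ and, through the monodromy of the universal curve, an action of the fundamental group on the fibre. Up to homotopy this factors through the action of $\mathrm{SL}_2(\mathbb{Z})\ltimes$ (a lattice) on the torus $T=(\mathbb{R}/\mathbb{Z})^2$. More precisely, the relevant group is an extension of $\mathrm{SL}_2(\mathbb{Z})$ acting affinely on $T$ through $\mathrm{ASL}_2(\mathbb{Z}/3)$ on the 3-torsion.

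A multisection of degree $n$ gives a finite $\pi_1$-invariant subset of the fibre. Equivalently, it gives a finite-index subgroup $\Gamma$ of the point-pushing/monodromy group. An indecomposable multisection is one whose invariant set is a single orbit. So the first step is to recall from \cite{B_Chen} the criterion: an orbit of size $n$ corresponds to a subgroup $H$ of index $n$ such that the induced fixed-point problem on the torus has a solution. In practice, this means a finite-index subgroup $\Gamma\le \mathrm{SL}_2(\mathbb{Z})$ together with a $\Gamma$-fixed point of the twisted affine action, i.e.\ a class in $H^1(\Gamma, \mathbb{Z}^2)$ or $H^1(\Gamma,\mathbb{Q}^2/\mathbb{Z}^2)$ twisted by the flex cocycle. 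The orbit size is then $9\cdot[\mathrm{SL}_2(\mathbb{Z}):\Gamma']$, up to the factor coming from the stabilizer in the translation part.

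Given Chen's divisibility result, write $n=9m$ for an indecomposable orbit. I will then split into cases according to whether the orbit points are torsion points relative to the flex origin. Say $3v$ has finite order $d$.

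\textbf{Torsion case.} The orbit is contained in the set of $v$ with $3v$ of exact order $d$. Since $\mathrm{SL}_2(\mathbb{Z})\to \mathrm{SL}_2(\mathbb{Z}/d)$ is surjective and acts transitively on points of exact order $d$, this set is a single orbit of size $9J_2(d)$. Indecomposability forces the orbit to be all of it. One then checks that $J_2(d)$ is divisible by $4$ for every $d\ge 3$: for $d$ odd it carries a factor $(p^2-1)$ divisible by $8$, and for $d=4k$ or $2\cdot$odd$>2$ one also gets a factor $4$. Hence $9J_2(d)$ is divisible by $36$ unless $d\in\{1,2\}$, which give $\sigma_{\mathrm{flex}}$ and $\sigma_{\mathrm{sext}}$.

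\textbf{Non-torsion case.} The points are non-torsion but $\Gamma$-fixed for a finite-index $\Gamma$; this includes the exotic multisections of \cite{B_Chen}. I expect this case to be the main obstacle. The approach I would try first is to use the torsion elements of $\mathrm{SL}_2(\mathbb{Z})$: the elements of order $4$ and $6$, namely $S$ and $ST$, together with $-I$.

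An orbit of size $n$ gives a permutation action of $\pi_1$ on $n$ points. The elements lifting $-I$, $S$ and $ST$, composed with suitable translations, act on the fibre as affine maps of finite order. An affine map $v\mapsto Av+b$ with $A=-I$ has exactly $4$ fixed points on $T$; with $A$ of order $4$ it has $2$ fixed points, and with $A$ of order $3$ or $6$ it has $3$ or $1$. Non-torsion points are never fixed by such a finite-order affine map, because fixed points of $A\neq I$ affine maps are torsion. Hence the lifts of $S$ act freely, which gives $4\mid n$. Combined with $9\mid n$, this gives $36\mid n$. The delicate point is that the actual lifts in $\pi_1(\mathcal{X})$ may have order larger than $4$, as they do in the braid-like extension. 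I would handle this by using \cite{B_Chen}'s explicit description of $\pi_1(\mathcal{X})$ to identify an element acting as an order-$4$ affine map of the fibre up to homotopy, and then count orbit sizes of the cyclic group it generates.
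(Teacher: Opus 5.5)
There is a genuine gap, and it starts with the model. A continuous multisection is not a finite $\pi_1(\mathcal{X})$-invariant subset of a fibre. The monodromy of $E\to\mathcal{X}$ exists only as a mapping class, so no affine action on $C_F$ has the multisections as its orbits, and the chosen points need not be torsion in any sense. The torsion-orbit picture is McMullen's description of \emph{algebraic} multisections. What the topology actually gives you is this: indecomposable means the cover $p_n:\tilde{\mathcal{X}}_n\to\mathcal{X}$ is connected, with $\Gamma_n$ of index $n/9$; homotopy classes of lifts of $p_n$ are classified by $H^1(\Gamma_n;\mathbb{Z}^2)$; and, on top of that, there is the non-cohomological requirement that the lift be homotopic to an injective one.

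Your formula ``orbit size $=9[\mathrm{SL}_2(\mathbb{Z}):\Gamma']$'' conflates two indices. One is the index of the stabilizer of a chosen point; the other is the degree $[\mathrm{SL}_2(\mathbb{Z}):\Gamma_n]$ of the cover. They differ precisely when the naive lift is non-injective, and that is the case your torsion step dismisses with ``indecomposability forces the orbit to be all of it''. Concretely, when $-I\in\Gamma_n$ the paper shows (Proposition~\ref{homotopic_sectionmap}) that $\tilde{\mathcal{X}}_n$ is a connected $m$-sheeted cover $f$ of $\tilde{\mathcal{X}}_{\mathrm{flex}}$ or $\tilde{\mathcal{X}}_{\mathrm{sext}}$, with $s_n\simeq s_{\mathrm{flex}}\circ f$ or $s_n\simeq s_{\mathrm{sext}}\circ f$. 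This uses $H^1(\Gamma_n;(\mathbb{Z}/3\mathbb{Z})^2)=0$ and $H^1(\Gamma_n;\mathbb{Z}^2)\subset(\mathbb{Z}/2\mathbb{Z})^2$, both obtained from the $-I$ trick. For $m\ge 2$, nothing in your proposal prevents such a lift from being perturbed to an injective one. Ruling this out is the real content of the theorem.

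The paper needs a genuinely new idea for this step (Proposition~\ref{m=1}). Take a loop $\gamma$ with $\rho(\gamma)=-I$ and $\gamma\cdot\gamma\simeq 1$. The initial velocities of the geodesics from the flex (or sextatic) point to the $m$ points of $s_n$ over it trace out a braid $b\in Br_m$ with two properties.
\begin{itemize}
\item $b$ is pure: $-I$ is central in $\mathrm{SL}_2(\mathbb{Z})$ and lies in $\Gamma_n$, so every lift of $\gamma$ closes up.
\item $b^2=\Delta^2$: $\rho(\gamma)=-I$ forces a framing of $\gamma^*\xi$ to square to the null-homotopy framing of $(\gamma\cdot\gamma)^*\xi$ twisted by one full turn.
\end{itemize}
Every square root of $\Delta^2$ is conjugate to $\Delta$, which is not pure for $m\ge 2$, so $m=1$.

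Your non-torsion case contains the right germ but uses the wrong dichotomy. The split that matters is $-I\in\Gamma_n$ versus $-I\notin\Gamma_n$. When $-I\in\Gamma_n$, the identity $2\phi(g)=\phi(-I)-g\cdot\phi(-I)$ shows that every class in $H^1(\Gamma_n;\mathbb{Z}^2)$ comes from a $\Gamma_n$-fixed $2$-torsion point. So in your language that case is always ``torsion'', and it is exactly the hard case above. Freeness of $S$ also cannot hold there: $S^2=-I$ is central, hence lies in every conjugate of $\Gamma_n$, so $\langle S\rangle$ has orbits of length at most $2$ on the sheets.

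When $-I\notin\Gamma_n$, the divisibility is pure group theory and needs no action on the fibre. Your fixed-point claim is false anyway: $v\mapsto Av+b$ has only non-torsion fixed points when $b$ is non-torsion. The correct argument is this: an element $a$ of order $4$ has $a^2=-I$, so no conjugate of $\Gamma_n$ meets $\langle a\rangle$ nontrivially. Hence every $\langle a\rangle$-orbit on $\mathrm{SL}_2(\mathbb{Z})/\Gamma_n$ has length $4$, giving $4\mid n/9$ (Lemma 6.1).

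The correct skeleton is therefore:
\begin{itemize}
\item if $-I\notin\Gamma_n$, then $36\mid n$ by this coset count;
\item if $-I\in\Gamma_n$, then the multisection is $\sigma_{\mathrm{flex}}$ or $\sigma_{\mathrm{sext}}$ by the cohomology-plus-braid rigidity argument, which is what your proposal is missing.
\end{itemize}
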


The indecomposable multisections we already know are those from the above construction satisfying that $I=\{n\}$ is a set of a single number. $J_2(3)=8$ and $12$ divides $J_2(n)$ for $n \geq 4$, therefore other than $\sigma_\mathrm{flex}$ or $\sigma_\mathrm{sext}$, these multisections have $\frac{n}{9}=0,8$ mod 12. By this theorem, the only cases we still do not know are $n/9 = 12k + 4$ for some integer $k$.

\begin{theorem}
There does not exist any multisection of degree $36k+18$ for any $\ k \in \mathbb{Z}_{\geq 0}$.
\end{theorem}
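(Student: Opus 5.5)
The plan is to reduce the statement to the Banerjee--Chen framework, where a multisection of degree $n$ corresponds to a finite $\pi_1(\mathcal{X})$-invariant subset of the torus fiber. That framework gives a finite orbit decomposition: a multisection is a disjoint union of indecomposable ones. We then classify which orbit sizes can occur.

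Concretely, recall from \cite{B_Chen} (following \cite{Chen_2018}) the structure of $\pi_1(\mathcal{X})$ acting on a fiber $C_F\cong(\mathbb{R}/\mathbb{Z})^2$. The monodromy lands in the affine group $(\mathbb{Z}/3)^2\rtimes \mathrm{SL}_2(\mathbb{Z})$, acting on the fiber up to homotopy, and a multisection up to homotopy is a finite invariant configuration. A multisection of degree $n=36k+18$ decomposes into indecomposable pieces. By Theorem 1.2, each piece is either $\sigma_{\mathrm{flex}}$ (degree 9), $\sigma_{\mathrm{sext}}$ (degree 27), or has degree divisible by $36$.

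The first step is to observe that the pieces are pairwise disjoint as subsets of each fiber. Since $\sigma_{\mathrm{flex}}$ is unique up to homotopy and its points are the flexes, it can appear at most once; the same holds for $\sigma_{\mathrm{sext}}$. For the latter I would argue, as in the proof of Theorem 1.1, that two disjoint copies of degree-27 invariant configurations cannot coexist. This uses the fact that any degree-27 multisection is homotopic to $\sigma_{\mathrm{sext}}$ and that two homotopic multisections can be made disjoint only if the configuration space $\mathrm{UConf}_{54}$ admits the corresponding lift. I expect this to follow from the same linking or monodromy computation, but it is the delicate point.

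With multiplicities of $9$ and $27$ at most one each, the total degree is
$$n = 9a+27b+36m, \qquad a,b\in\{0,1\}.$$
Reducing modulo $36$, the possible residues are $0, 9, 27, 36\equiv 0$. Note that $9+27=36\equiv 0$, so the pair $a=b=1$ contributes nothing new. In particular $n\equiv 18 \pmod{36}$ is impossible, which proves the theorem.

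The main obstacle is justifying that $\sigma_{\mathrm{flex}}$ and $\sigma_{\mathrm{sext}}$ each appear at most once, that is, ruling out two disjoint copies. Without it, $9+9=18$ or $9+9+27+9$ patterns could give residue 18.

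For $\sigma_{\mathrm{flex}}$ the argument is direct. Any degree-9 invariant set is the $3$-torsion coset, which is unique as a set, so two disjoint copies are impossible.

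For degree 27 I would identify the invariant set as the set $\{v: 3v\in E[2]\setminus 0\}$ up to the homotopy of Theorem 1.1. Two disjoint copies would then give an invariant 54-point set whose translation structure forces a second orbit of the same type. I would check against the finite affine-group orbit sizes that no such orbit exists. This is done by computing orbits of the congruence-level monodromy group on $(\frac{1}{6}\mathbb{Z}/\mathbb{Z})^2$-type points.

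If this uniqueness fails in general, the fallback is a parity argument. Compute the sign or degree of the monodromy action of a specific loop (e.g. a Dehn twist lift) on the $n$ points. Compare it with the known obstruction for $n=18$ in \cite{B_Chen}, and show that it depends only on $n \bmod 36$.
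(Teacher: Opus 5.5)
Your global structure is the paper's (Proposition 6.2 and the proof of Theorem 1.4). You split into connected components, use the classification of indecomposable multisections (Theorem 1.3) to write $n=9a+27b+36m$, show $a,b\le 1$, and finish mod $36$; that arithmetic is fine. The gap is the step $a,b\le 1$, which is exactly the new content here: for $n=54$, for instance, one must exclude two components each homotopic to $\sigma_{\mathrm{sext}}$.

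Your arguments for this step do not work, because they treat a multisection up to homotopy as a finite subset of the fiber invariant under an affine monodromy action. But $\pi_1(\mathcal{X})$ acts on $C_F$ only through mapping classes, and a component that is merely homotopic to $\sigma_{\mathrm{flex}}$ or $\sigma_{\mathrm{sext}}$ need not consist of flexes or sextatic points. ``Unique up to homotopy'' does not forbid two disjoint homotopic copies. Likewise, orbit counts of torsion points under $(\mathbb{Z}/3)^2\rtimes\mathrm{SL}_2(\mathbb{Z})$ cannot detect whether $\sigma_{\mathrm{sext}}$ can be continuously pushed off itself. If your ``unique as a set'' reasoning were valid, Banerjee--Chen's nonexistence of degree-$18$ multisections would be trivial, and it is not. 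The fallback parity invariant is never defined, and there is no reason such an invariant would depend only on $n \bmod 36$.

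The missing idea is the Euler class obstruction of Proposition 5.1. Suppose two components are homotopic to $\tilde{\mathcal{X}}_{\mathrm{sext}}$. Use the homotopies to $s_{\mathrm{sext}}$ and the flat metric to replace each point $x_i$ by the initial velocity $V_i\in T_qC_F$ of the geodesic from $q$ homotopic to its track. Injectivity forces $V_1\neq V_2$, so $V_1-V_2$ is a nonvanishing section of $\xi_{\mathrm{sext}}$ over $\tilde{\mathcal{X}}_{\mathrm{sext}}$, giving $e(\xi_{\mathrm{sext}})=0$.

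One must then prove $e(\xi_{\mathrm{sext}})\neq 0$. The classifying map factors as $\tilde{\mathcal{X}}_{\mathrm{sext}}\to B\Gamma_{\mathrm{sext}}\to B\mathrm{SL}_2(\mathbb{Z})\to B\mathrm{SL}_2(\mathbb{R})$.
\begin{enumerate}
\item The first map is injective on $H^2$, by the Serre spectral sequence with fiber $Y$, where $\pi_1(Y)\cong\mathbb{Z}/3$ so $H^1(Y;\mathbb{Z})=0$.
\item The last map is surjective onto $H^2(B\mathrm{SL}_2(\mathbb{Z});\mathbb{Z})\cong\mathbb{Z}/12$.
\item A transfer argument for the index-$3$ subgroup $\Gamma_{\mathrm{sext}}$ shows the middle map sends a generator to an element of order $4$ or $12$.
\end{enumerate}

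For flexes, either run the same argument with $e(\xi_{\mathrm{flex}})\neq 0$, or note that two flex-type components would form a degree-$18$ multisection, which Banerjee--Chen exclude.
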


After Theorem 1.4 in this paper and Theorem 1.9 in \cite{B_Chen}, the only $n$ for which Question 1.1 remains the open case are $n= 108k+ 45 \ \mathrm{or} \ 108k+ 63$, where $k \in \mathbb{Z}_{\geq 0}$. Theorem 1.4 in this paper gives a negative answer to Question 1.1 when $n= 36k+18$, and Theorem 1.9 in \cite{B_Chen} gave a positive answer when $n = 108k+m , \, m \in \{0, 9, 27, 36, 72, 81, 99\}$. 

We also propose a question as a supplement to Theorem 1.3.

\begin{question}
Does there exist any indecomposable multisection of degree 36? 
\end{question}

We will explain why this question is important in Subsection 6.2.

\vspace{2mm}

The proofs of Theorem 1.2, 1.3, and 1.4 follow a similar path, which we will sketch here: Consider two cases divided by the ``monodromy group" $\Gamma_n$ (see Definition 3.1) of an indecomposable multisection. If $-I \in \Gamma_n$ we prove that the multisection can only be either $\sigma_{\mathrm{flex}}$ or $\sigma_{\mathrm{sext}}$. If $-I \notin \Gamma_n$, we prove that 36 divides the degree of the multisection. 

This paper is organized as follows: In Section 2, we introduce the concept of virtual section, which is equivalent to the concept of multisection, but with more topology information. In Section 3, we study the properties of how indecomposable multisections are embedded into $E$. In Section 4, we find an element in the Artin's braid group $Br_m$ for some particular loops inside covering spaces of $\mathcal{X}$, which leads to an equation in $Br_m$, and use such equation to show contradiction unless the multisection is $\sigma_{\mathrm{flex}}$ or $\sigma_{\mathrm{sext}}$. This is the key step of the proof, which we think is the most original and creative part. In Section 5, we extend from indecomposable cases to general cases. Finally, in Section 6, we prove the main theorems.

\subsection*{Acknowledgement}

I am very grateful to Weiyan Chen for introducing the question to me, and also for his meticulous guidance on writing this paper. I thank Leandro Vendramin for helping me understand his table of finite index subgroups of $PSL_2(\mathbb{Z})$.

\subsection*{Notations}

Here is a list of frequently used notations.

\vspace{2mm}

Introduced in the introduction and Chapter 2, and used globally:

$\mathcal{X}$ is the space of all nonsingular cubic curves. \par
$\pi: E \rightarrow \mathcal{X}$ is the universal cubic plane curve, a fiber bundle over $\mathcal{X}$ whose fiber at point $F$ is $C_F$. \par
In general, a virtual section of degree n is denoted as $\tilde{\mathcal{X}}_n$, its covering map is denoted by $p_n$, and its section map is denoted by $s_n$.\par
The virtual section given by all inflection points is denoted by $\tilde{\mathcal{X}}_\mathrm{flex}$, its covering map to $\mathcal{X}$ is denoted by $p_\mathrm{flex}$, and its section map is denoted by $s_\mathrm{flex}$.\par
The virtual section given by all sextatic points is denoted by $\tilde{\mathcal{X}}_\mathrm{sext}$, its covering map to $\mathcal{X}$ is denoted by $p_\mathrm{sext}$, and its section map is denoted by $s_\mathrm{sext}$.

We usually use $F$ to denote a point in $\mathcal{X}$.  \par
Usually an inflection point in a cubic curve $F$ is denoted as $p$, and an element in $\tilde{\mathcal{X}}_\mathrm{flex}$ is denoted as $(F,p)$; a sextatic point is denoted as $q$, and an element in $\tilde{\mathcal{X}}_\mathrm{sext}$ is denoted as $(F,q)$; a point that is neither an inflection point nor a sextatic point is denoted as $x$, and an element in $\tilde{\mathcal{X}}_n$ is denoted as $(F,x)$.

We usually use $t$ to denote a point in $S^1$. In addition, $t$ will be written in the form of numbers in $\mathbb{R}/\mathbb{Z}$, and the base point of the loop is always $\gamma(0)$. 

Throughout the paper, the concatenation of two loops $\alpha$ and $\beta$ is denoted as $\alpha \cdot \beta$.

\vspace{2mm}

Introduced and mainly used in Chapter 3:

$K$ is the subgroup of $\mathrm{SL}_3(\mathbb{C})$ generated by
\[\begin{bmatrix} 
0 & 0 & 1 \\
1 & 0 & 0 \\
0 & 1 & 0
\end{bmatrix},
\begin{bmatrix} 
1 & 0 & 0 \\
0 & \omega & 0 \\
0 & 0 & \omega^2
\end{bmatrix},\  \omega=e^{2\pi i /3},
\]
and $Z(K)$ is its center, containing $I, \omega I, \omega^2 I$. \par
$\Lambda=\pi_1(\mathcal{X})/Z(K)$.\par
$\tilde{\Gamma}_n=\pi_1(\tilde{\mathcal{X}}_n/Z(K))$ for a general virtual section $\tilde{\mathcal{X}}_n$.\par
$\tilde{\Gamma}_\mathrm{flex}=\pi_1(\tilde{\mathcal{X}}_\mathrm{flex})/Z(K)$ and $\tilde{\Gamma}_\mathrm{sext}=\pi_1(\tilde{\mathcal{X}}_\mathrm{sext})/Z(K)$. \par
$\rho$ is the monodromy representation $\mathcal{X} \rightarrow Aut \, H_1(C_F;\mathbb{Z}) \cong \mathrm{SL}_2(\mathbb{Z})$. Usually we also denote the restriction of $\rho$ on  $\tilde{\Gamma}_\mathrm{flex}, \tilde{\Gamma}_\mathrm{sext}$ or $\tilde{\Gamma}_n$  as $\rho$. \par
$\Gamma_n$ is the image of $\tilde{\Gamma}_n$ under $\rho$, which is a subset of  $\mathrm{SL}_2(\mathbb{Z})$. We call it the monodromy group.

\vspace{2mm}

Introduced and mainly used in Chapter 4:

$\xi_{\mathrm{flex}}$ is the vector bundle over $\tilde{\mathcal{X}}_\mathrm{flex}$, whose fiber at point $(F,p)$ is the tangent space $T_pC_F$. \par
$\xi_{\mathrm{sext}}$ is the vector bundle over $\tilde{\mathcal{X}}_\mathrm{sext}$, whose fiber at point $(F,q)$ is the tangent space $T_qC_F$. \par

Usually we use $\gamma$ to denote a loop inside $\tilde{\mathcal{X}}_\mathrm{flex}$ or $\tilde{\mathcal{X}}_\mathrm{sext}$, and 
$\mathcal{T}$ to denote a trivialization of $\gamma^*\xi_{\mathrm{flex}}$ or $\tilde{\mathcal{X}}_\mathrm{flex}$.

\vspace{3mm}

\section{Multisections and virtual sections}

Before we begin our discussion on Question 1.1, we want to introduce the concept of virtual section. 

All the definitions and results in this section, except the definition of indecomposable multisection and the fact that indecomposable multisections are in bijection with connected virtual sections, are taken from \cite{B_Chen}.

\begin{definition}[Virtual section]
A \textit{virtual section} over $\mathcal{X}$ of degree $n$ is a triple $(\tilde{\mathcal{X}}_n, p_n, s_n)$, where $p_n : \tilde{\mathcal{X}}_n \rightarrow \mathcal{X}$ is a (possibly nonconnected) $n$-sheeted cover, and $s_n : \tilde{\mathcal{X}}_n \rightarrow E$ is a map that $p_n = \pi \circ s_n$, where $\pi$ is the projection map in the bundle $ E \rightarrow \mathcal{X}$.

Two virtual sections $(\tilde{\mathcal{X}}_n, p_n, s_n)$ and $(\tilde{\mathcal{X}}_n', p_n', s_n')$ are \textit{homotopic}, if there is an isomorphism $f: \tilde{\mathcal{X}}_n \rightarrow \tilde{\mathcal{X}}_n'$ of cover of $\mathcal{X}$, and there is a homotopy $s_t: \tilde{\mathcal{X}}_n \rightarrow E$, such that  $s_0=s_n' \circ f$ and $s_1 =s_n$.

\end{definition}

\vspace{2mm}

\begin{definition}[Multisection]
For any positive integer $n$, we have a fiber bundle $\pi_n: E_n \rightarrow \mathcal{X}$, where the fiber for a point $F \in \mathcal{X}$ is $\mathrm{UConf}_n(C_F)$, where $ \mathrm{UConf}_n(C_F) = \{ \, (x_1, \dots, x_n) \in (C_F)^n \ | \, x_i \neq x_j, \forall i \neq j \, \}/ S_n.$ A \textit{multisection} of degree $n$ over $\mathcal{X}$ is a section $\sigma_n: \mathcal{X} \rightarrow E_n$ of $\pi_n$.

Two multisections are \textit{homotopic} if they are homotopic as sections of $E_n \rightarrow \mathcal{X}$.

\end{definition}

\vspace{2mm}

There are relationships between these two concepts. For a multisection $\sigma_n$, let $$ \tilde{\mathcal{X}}_n = \{  (F, p) \in E \, | \, p \in \sigma_n(F) \}, \  p_n:   \tilde{\mathcal{X}}_n \rightarrow \mathcal{X}, (F, p) \mapsto F ,$$ and $s_n:\tilde{\mathcal{X}}_n \rightarrow E$  be the embedding. Then $p_n$ is a covering map, and $p_n = \pi \circ s_n$. So $(\tilde{\mathcal{X}}_n, p_n, s_n)$ is a virtual section. In particular, $s_n$ is injective. We say that a virtual section is \textit{injective} if $s_n$ is injective.

On the other hand, for any injective virtual section $(\tilde{\mathcal{X}}_n, p_n, s_n)$, we can construct a multisection over $\mathcal{X}$ in the following way: for every point $F \in \mathcal{X}$, its preimage $f^{-1}(F)$ is $n$ points in $\tilde{\mathcal{X}}_n$, denoted as $x_1, \dots, x_n$. Since $s_n$ is injective, $\{ s_n(x_i) | i=1, \dots, n \} $ is a set of $n$ distinct points in $C_F$.
We can therefore construct the multisection $\sigma_n$ by selecting $\sigma_n(F) = \{ s_n(x_i) | i=1, \dots, n \}$.

The previous two paragraphs give a way to correspond any \textbf{injective} virtual section to a multisection, and vice versa. Banerjee-Chen proved that:
\begin{enumerate}
    \item \textbf{Injective} virtual sections are in bijections with multisections  (\cite{B_Chen}, Corollary 2.5);
    \item Two \textbf{injective} virtual sections are homotopic if and only if their corresponding multisections are homotopic. (\cite{B_Chen}, Proposition 2.6 and 2.7)
\end{enumerate}

Therefore, the concept of multisection is equivalent to the concept of \textbf{injective} virtual section. Because the concept of virtual section contains more topological structures, we will use virtual sections instead of multisections in this paper. Unless otherwise specified, we always assume that a virtual section is injective in this paper.

\vspace{2mm}

Next, we define connected virtual section and indecomposable multisection.

\begin{definition}

A virtual section is \textit{connected} if $\tilde{\mathcal{X}}_n$ is connected.

For two positive integers $n' \leq n$, a multisection $\sigma_n: \mathcal{X} \rightarrow E_n$ of degree $n$ \textit{contains} another multisection $\sigma_{n'}: \mathcal{X} \rightarrow E_{n'}$ of degree $n'$, if  $\sigma_{n'}(F) \subset \sigma_n(F)$ for every point $F \in \mathcal{X}$, where $\sigma_n(F) =\{x_1, \dots x_n\}$ and $\sigma_{n'}(F)=\{y_1, \dots, y_{n'}\}$ are treated as subsets of $C_F$. 
A multisection is \textit{indecomposable} if it does not contain any nonzero multisection other than itself.

\end{definition}

We claim that a multisection $\sigma_n$ is indecomposable if and only if its corresponding virtual section $\tilde{\mathcal{X}}_n$ is connected. To show this, we prove that a multisection is not indecomposable if and only if its corresponding virtual section is not connected. Indeed, if $\sigma_n$ contains $\sigma_n'$ then $\tilde{\mathcal{X}}_{n'} \subset \tilde{\mathcal{X}}_n$, which shows that $\tilde{\mathcal{X}}_n$ is not connected. On the other hand, if $\tilde{\mathcal{X}}_n$ is not connected, consider any of its connected components. Because $\tilde{\mathcal{X}}_n$ is a covering space of $\mathcal{X}$, any of its connected components is also a covering space of $\mathcal{X}$. Denote it $\tilde{\mathcal{X}}_{n'}$, where $n'$ is the degree of this covering space. Take $p_{n'}$ (resp. $s_{n'}$) to be the restriction of $p_n$ (resp. $s_n$) on $\tilde{\mathcal{X}}_{n'}$, then $( \tilde{\mathcal{X}}_{n'} ,p_{n'}, s_{n'} )$ is a virtual section and its corresponding multisection $\sigma_{n'}$ is contained in $\sigma_n$.

\vspace{2mm}

In this paper, we will use the concept of connected virtual section instead of indecomposable multisection.

\vspace{3mm}

\section{Topological relations between virtual sections and $\tilde{\mathcal{X}}_\mathrm{flex}$}

In this section, we aim to discuss the properties of $H^1(\tilde{\mathcal{X}}_n;\mathbb{Z}^2)$ for the virtual section
$p_n:\tilde{\mathcal{X}}_n \rightarrow {\mathcal{X}}$, where the $\mathbb{Z}^2$ coefficients are in the monodromy representation of the pullback of the universal cubic curve bundle $p_n^*E$. Then we will use the properties of $H^1(\tilde{\mathcal{X}}_n;\mathbb{Z}^2)$ to prove the following theorem: 

\begin{proposition} \label{homotopic_sectionmap}
Given a connected virtual section $( \tilde{\mathcal{X}}_n,p_n,s_n )$. Suppose $-I \in \Gamma_n$. We have the following:

\begin{enumerate} \setlength{\itemindent}{1em}
    \item [(a).]  If $[s_n] = 0 \in H^1(\tilde{\mathcal{X}}_n;\mathbb{Z}^2)$, then there is a covering map $f:\tilde{\mathcal{X}}_n \rightarrow \tilde{\mathcal{X}}_\mathrm{flex}$, such that $p_\mathrm{flex} \circ f = p$, and $s_n$ is homotopic to $s_\mathrm{flex} \circ f$. 
    \item [(b).]  If $[s_n] \neq 0 \in H^1(\tilde{\mathcal{X}}_n;\mathbb{Z}^2)$, then there is a covering map $f:\tilde{\mathcal{X}}_n \rightarrow \tilde{\mathcal{X}}_\mathrm{sext}$, such that $p_\mathrm{sext} \circ f = p$, and $s_n$ is homotopic to $s_\mathrm{sext} \circ f$.
\end{enumerate}

\end{proposition}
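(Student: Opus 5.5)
The plan is to translate homotopy classes of section maps into twisted first cohomology, and then use $-I\in\Gamma_n$ to show that the class $[s_n]$ is $2$-torsion. A $2$-torsion class can be realized by a section taking values in a locally constant set of torsion points. That set is the inflection points when $[s_n]=0$, and the sextatic points otherwise.

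\textbf{Step 1: the class $[s_n]$ as a section of a group bundle.} On each fibre $C_F$, the map $x\mapsto 3x$ does not depend on which inflection point is chosen as origin, since two such choices differ by a $3$-torsion point. It therefore defines a fibrewise degree-$9$ covering $\mu:E\to J$ onto a bundle of abelian groups $J\to\mathcal{X}$ with a canonical zero section. I take $[s_n]$ to be the class of the section $\mu\circ s_n$ of $p_n^*J$.

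The fibre of $p_n^*J$ is a $K(\mathbb{Z}^2,1)$ and the bundle has a zero section. So homotopy classes of sections of $p_n^*J$ are in bijection with $H^1(\tilde{\mathcal{X}}_n;\mathbb{Z}^2)$, with the $\mathbb{Z}^2$ twisted by $\rho$. Because $\mu$ is a fibrewise covering, homotopies of sections of $p_n^*J$ lift uniquely to homotopies of maps into $E$ once the starting lift is fixed. Hence it suffices to find a section $s'$ of $p_n^*J$, homotopic to $\mu\circ s_n$, that has a convenient lift to $E$.

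\textbf{Step 2: $2[s_n]=0$.} This is the step I expect to be the main obstacle. Write $G=\pi_1(\tilde{\mathcal{X}}_n)$ and $N=\ker(\rho|_G)$, so that $G/N=\Gamma_n$ up to the finite central group $Z(K)$. The point to verify from Section 3 is that $N$ is finite: $\pi_1(\mathcal{X})$ is built from the finite group $K$ together with a lift of $\mathrm{SL}_2(\mathbb{Z})$, which is why $\Lambda=\pi_1(\mathcal{X})/Z(K)$ is introduced. Since $N$ acts trivially on $\mathbb{Z}^2$, we have $H^1(N;\mathbb{Z}^2)=\mathrm{Hom}(N,\mathbb{Z}^2)=0$. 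Inflation--restriction then gives $H^1(G;\mathbb{Z}^2)\cong H^1(\Gamma_n;\mathbb{Z}^2)$.

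Now $-I$ is central in $\Gamma_n$. Conjugation by a central element acts trivially on $\Gamma_n$, so it induces the identity on $H^1(\Gamma_n;\mathbb{Z}^2)$. On the other hand, the induced map is also the map coming from the coefficient action of $-I$, which is multiplication by $-1$. Concretely, for a crossed homomorphism $c$, comparing $c(g\cdot(-I))$ with $c((-I)\cdot g)$ shows that $2c(g)$ equals $(1-g)c(-I)$, a principal crossed homomorphism. Thus $2[s_n]=0$.

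\textbf{Step 3: realizing the $2$-torsion class.} From the Bockstein sequence of $0\to\mathbb{Z}^2\xrightarrow{2}\mathbb{Z}^2\to(\mathbb{Z}/2)^2\to0$, the $2$-torsion of $H^1(\tilde{\mathcal{X}}_n;\mathbb{Z}^2)$ is the image of $H^0(\tilde{\mathcal{X}}_n;(\mathbb{Z}/2)^2)$. A global element of $H^0(\tilde{\mathcal{X}}_n;(\mathbb{Z}/2)^2)$ is a locally constant section $\tau$ of $p_n^*J[2]$, and $\tau$ maps to its class viewed as a section of $p_n^*J$. Hence $\mu\circ s_n$ is homotopic to such a $\tau$. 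Because $\tilde{\mathcal{X}}_n$ is connected, $\tau$ is either identically $0$ or nowhere $0$, and it is $0$ exactly when $[s_n]=0$.

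\textbf{Step 4: building the covering map $f$.} Lift the homotopy from $\mu\circ s_n$ to $\tau$ through $\mu$, starting at $s_n$. This gives $s_n\simeq s'$ with $3s'(F,x)=\tau(F,x)$. If $\tau=0$, then $s'(F,x)$ is an inflection point; if $\tau\neq0$ is $2$-torsion, then $s'(F,x)$ is a sextatic point, by the description of $\sigma_{\mathrm{sext}}$ as the points $v$ with $3v$ a nonzero $2$-torsion point.

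So $s'$ factors as $s_{\mathrm{flex}}\circ f$ or $s_{\mathrm{sext}}\circ f$ for a continuous map $f$ over $\mathcal{X}$, that is, with $p_{\mathrm{flex}}\circ f=p_n$ (respectively $p_{\mathrm{sext}}\circ f=p_n$). A map between covering spaces of $\mathcal{X}$ that commutes with the projections is itself a covering map. This gives (a) and (b).
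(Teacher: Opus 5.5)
Your proof is correct, and it takes a genuinely different route from the paper's.

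\textbf{The paper's route.} It first uses $H^1(\Gamma_n;(\mathbb{Z}/3\mathbb{Z})^2)=0$ (Proposition~\ref{covering_map}) to build the covering $f:\tilde{\mathcal{X}}_n\to\tilde{\mathcal{X}}_\mathrm{flex}$, and measures $[s_n]$ against the reference lift $s_\mathrm{flex}\circ f$. It then shows every class has the form $\frac{1}{2}(g\cdot v-v)$. In case (b) it reads off from $v\not\equiv 0$ mod $2$ that $\Gamma_n$ is conjugate into $\Gamma_\mathrm{sext}$, and uses the mod-$3$ vanishing a second time to conjugate $\tilde{\Gamma}_n$ into $\tilde{\Gamma}_\mathrm{sext}$. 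Finally it needs $H^1(\Gamma_\mathrm{sext};\mathbb{Z}^2)=\mathbb{Z}/2\mathbb{Z}$ and the mapping-torus argument (Lemma~\ref{mapping_torus}, Proposition~\ref{sext}(d)) to conclude $f^*[s_\mathrm{sext}]=[s_n]$.

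\textbf{Your route.} You push sections down along the $9$-sheeted covering $\mu:E\to J$, where the zero section gives a canonical base point. The only cohomological input is the $2$-torsion computation of Proposition 3.4 plus the Bockstein realization of $2$-torsion classes by flat $2$-torsion sections. Lifting the homotopy through $\mu$ then gives a flex- or sextatic-valued lift. The covering maps $f$ come for free as factorizations through $\tilde{\mathcal{X}}_\mathrm{flex}$ or $\tilde{\mathcal{X}}_\mathrm{sext}\subset E$.

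\textbf{What each buys.} Your argument bypasses Proposition~\ref{covering_map}, the explicit form of $\Gamma_\mathrm{sext}$, and Lemma~\ref{mapping_torus}; with your definition of $[s_n]$, case (a) does not even use $-I\in\Gamma_n$. It also generalizes at once: a class killed by $m$ is realized by a lift into the classical points $x$ with $3x\in J[m]$. The paper's route gives in exchange explicit group-theoretic information: the conjugacy $\Gamma_n\subset\Gamma_\mathrm{sext}$ and an explicit cocycle for $[s_\mathrm{sext}]$.

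\textbf{Two points to tighten.}
\begin{enumerate}
\item As a self-map of $C_F$, $x\mapsto 3x$ is not origin-independent: moving the flex origin from $O$ to $O'$ changes $3x$ by $O'$. Take $J$ to be the relative $\mathrm{Pic}^0$ and $\mu(x)=[3x-H]$, with $H$ the line class. This is canonical because $3O\sim H$ for every flex $O$. Then $\mu^{-1}(0)$ is exactly the flexes, and the preimage of the nonzero $2$-torsion is exactly the sextatic points.
\item Your $[s_n]$ is not literally the paper's class, so say why the hypotheses match. On fibre fundamental groups $\mu$ is multiplication by $3$, so your class is $3$ times the paper's, which equals the paper's since it is $2$-torsion. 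Equivalently, both vanish exactly when $s_n$ is homotopic to a flex-valued lift; here $f$ is unique because $((\mathbb{Z}/3\mathbb{Z})^2)^{\Gamma_n}=0$.
\end{enumerate}
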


The symbol $\Gamma_n$ in the assumption is the monodromy group of $\tilde{\mathcal{X}}_n$, which will be defined in Definition 3.1. The symbol $[s_n]$ means the homotopy class that  $s_n$ represents, and the relationship between homotopy classes of section maps and $H^1(\tilde{\mathcal{X}}_n;\mathbb{Z}^2)$ will be explained at the beginning of Subsection 3.2.

\vspace{3mm}

In 3.1, we study the fundamental groups of $\mathcal{X}$ and $E$, and the monodromy representation $\rho: \mathcal{X} \rightarrow Aut \, H_1(C_F;\mathbb{Z}) \cong \mathrm{SL}_2(\mathbb{Z})$. In 3.2, we calculate  $H^1(\tilde{\mathcal{X}}_n;\mathbb{Z}^2)$. In 3.3, we prove Proposition \ref{homotopic_sectionmap}.

\subsection{Structure of fundamental groups of $\mathcal{X}$ and $E$}
 
To begin our discussion, we want to calculate the fundamental groups of $\mathcal{X}$ and $E$, which is preliminary to our calculation for $H^1(\tilde{\mathcal{X}}_n;\mathbb{Z}^2)$. 
The monodromy of the bundle $E \rightarrow \mathcal{X}$ gives a monodromy representation $\rho : \mathcal{X} \rightarrow Aut \, H_1(C_F;\mathbb{Z}) \cong \mathrm{SL}_2(\mathbb{Z})$. Dolgachev and Libgober proved in \cite{Dolgachev} that $\rho$ is surjective, and fits into the following split exact sequence:

\begin{equation} \label{SES}
    1 \rightarrow K \rightarrow \pi_1(\mathcal{X}) \xrightarrow{\rho} Aut \ H_1(C_F;\mathbb{Z}) \rightarrow 1 
\end{equation}

Here $K$ is the Heisenberg group of order 27, or in matrix form, a subgroup of $\mathrm{SL}_3(\mathbb{C})$ generated by 
\[\begin{bmatrix} 
0 & 0 & 1 \\
1 & 0 & 0 \\
0 & 1 & 0
\end{bmatrix} \mathrm{and}
\begin{bmatrix} 
1 & 0 & 0 \\
0 & \omega & 0 \\
0 & 0 & \omega^2
\end{bmatrix},\  \omega=e^{2\pi i /3}.
\]

In Lemma 3.6, \cite{B_Chen}, it is shown that both $\pi_1(\mathcal{X})$ and $\pi_1(E)$ contain $Z(K)$ as a normal subgroup; moreover, by Theorem 3.9 in the same paper, for any virtual section $\tilde{\mathcal{X}}_n$, its fundamental group must contain $Z(K)$ as a normal subgroup as well. Therefore, we may quotient out $Z(K)$ from $\pi_1(\mathcal{X})$ and $\pi_1(E)$. Denote $\pi_1(\mathcal{X})/Z(K)$ as $\tilde{\Gamma}$ and $\pi_1(E)/Z(K)$ as $\Lambda$. We will also denote $ \tilde{\Gamma}_\mathrm{flex} = \pi_1(\tilde{\mathcal{X}}_\mathrm{flex})/Z(K) $ and $ \tilde{\Gamma}_\mathrm{sext} = \pi_1(\tilde{\mathcal{X}}_\mathrm{sext})/Z(K). $

Proposition 3.8 in \cite{B_Chen} shows the structure of $\Lambda = \pi_1(E)/Z(K) $ and $\tilde{\Gamma} = \pi_1(\mathcal{X})/Z(K)$ in a commutative diagram. The following is a list of properties taken from Proposition 3.8 in  \cite{B_Chen} that will be used in this paper.

\vspace{2mm}

\begin{enumerate} \setlength{\itemindent}{1em}
    \item [(1).] $\tilde{\Gamma}_\mathrm{flex} \cong \mathrm{SL}_2(\mathbb{Z})$ and $\tilde{\Gamma} \cong (\mathbb{Z}/3\mathbb{Z})^2  \rtimes \tilde{\Gamma}_\mathrm{flex}$.
    \item [(2).] $\Lambda \cong \mathbb{Z}^2  \rtimes  \tilde{\Gamma}_\mathrm{flex}$, and the action of $\tilde{\Gamma}_\mathrm{flex}$ on $\mathbb{Z}^2$ is the standard action of $\mathrm{SL}_2(\mathbb{Z})$ on $\mathbb{Z}^2$.
    \item [(3).] The map $\pi_*: \Lambda \rightarrow \tilde{\Gamma} $ explicitly is
\[(v, g) \mapsto (\Bar{v},g),\]
where $\Bar{v}$ is the image of $v$ under the quotient map $\mathbb{Z}^2 \rightarrow (\mathbb{Z}/3\mathbb{Z})^2$. In the isomorphism $\tilde{\Gamma} = (\mathbb{Z}/3\mathbb{Z})^2  \rtimes  \tilde{\Gamma}_\mathrm{flex}$, the action of $\tilde{\Gamma}_\mathrm{flex}$ on $(\mathbb{Z}/3\mathbb{Z})^2$ is a composition of the standard action $\tilde{\Gamma}_\mathrm{flex}$ on $\mathbb{Z}^2$ and the natural change of coefficients from $\mathbb{Z}^2$ to $(\mathbb{Z}/3\mathbb{Z})^2$.
\end{enumerate}

We now define the monodromy group.

\begin{definition}
Given a connected virtual section  $\tilde{\mathcal{X}}_n$, the \textit{monodromy group} of $\tilde{\mathcal{X}}_n$ is $\Gamma_n = \rho(\tilde{\Gamma}_n)$, where $\rho$ is the monodromy representation, and $\tilde{\Gamma}_n = \pi_1( \tilde{\mathcal{X}}_n )/Z(K)$.
\end{definition}

By the exact sequence (\ref{SES}), we know that $Z(K)$ lies in the kernel of the monodromy action $\rho$, so the definition makes sense.
From Corollary 3.12 in \cite{B_Chen}, we know that $\Gamma_n$ is a subgroup of $\mathrm{SL}_2(\mathbb{Z})$ with finite index $n/9$. Similarly, we will use $\Gamma_\mathrm{flex}$ to denote $\rho(\tilde{\Gamma}_\mathrm{flex})$ and use  $\Gamma_\mathrm{sext}$ to denote $\rho(\tilde{\Gamma}_\mathrm{sext})$.

Using the properties, we will prove that if $\Gamma_n \subset \mathrm{SL}_2(\mathbb{Z})$ contains $-I$, then $\tilde{\mathcal{X}}_n$ is a cover of $\tilde{\mathcal{X}}_\mathrm{flex}$:

\begin{proposition} \label{covering_map}
Given a connected virtual section  $(\tilde{\mathcal{X}}_n, p_n, s_n)$. If $\Gamma_n$ is a subgroup of $\mathrm{SL}_2(\mathbb{Z})$ containing $-I$, then we have a covering map $f:\tilde{\mathcal{X}}_n \rightarrow \tilde{\mathcal{X}}_\mathrm{flex}$ such that $p_n = p_\mathrm{flex} \circ f$.
\end{proposition}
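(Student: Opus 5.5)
By the lifting criterion for covering spaces, it suffices to show that $\pi_1(\tilde{\mathcal{X}}_n)$, viewed as a subgroup of $\pi_1(\mathcal{X})$ via $(p_n)_*$, is contained in some conjugate of $\pi_1(\tilde{\mathcal{X}}_\mathrm{flex})$. Since both subgroups contain $Z(K)$ as a normal subgroup (Theorem 3.9 of \cite{B_Chen}), we may pass to the quotient. So the goal is to show that $\tilde{\Gamma}_n \subset \tilde{\Gamma} = (\mathbb{Z}/3\mathbb{Z})^2 \rtimes \tilde{\Gamma}_\mathrm{flex}$ is conjugate into $\tilde{\Gamma}_\mathrm{flex} = \{0\}\rtimes \mathrm{SL}_2(\mathbb{Z})$. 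Once $\tilde{\mathcal{X}}_n$ is connected and such a containment holds, lifting $p_n$ through $p_\mathrm{flex}$ gives a map $f$ with $p_\mathrm{flex}\circ f = p_n$. This $f$ is automatically a covering map, because it is a map between covers of $\mathcal{X}$ commuting with the projections.

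\textbf{Step 1 (structure of $\tilde{\Gamma}_n$).} Recall that $\rho$ restricted to $\tilde{\Gamma}$ is the projection $(v,g)\mapsto g$. By definition $\Gamma_n = \rho(\tilde{\Gamma}_n)$, and by Corollary 3.12 of \cite{B_Chen} it has index $n/9$ in $\mathrm{SL}_2(\mathbb{Z})$. Since $[\tilde{\Gamma}:\tilde{\Gamma}_n] = n = 9\,[\mathrm{SL}_2(\mathbb{Z}):\Gamma_n]$ and $|(\mathbb{Z}/3)^2| = 9$, the index count forces $\tilde{\Gamma}_n \cap (\mathbb{Z}/3)^2 = 0$. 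Hence $\rho|_{\tilde{\Gamma}_n}$ is an isomorphism onto $\Gamma_n$, and we can write
\[
\tilde{\Gamma}_n = \{(c(g), g) : g\in \Gamma_n\}
\]
for a function $c:\Gamma_n\to(\mathbb{Z}/3)^2$. The group law in the semidirect product shows that $c$ is a $1$-cocycle: $c(gh) = c(g) + g\cdot c(h)$.

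\textbf{Step 2 (the cocycle is a coboundary).} Conjugating $\tilde{\Gamma}_n$ by the element $(w,1)$ replaces $c$ by $c + (g-1)w$. So it suffices to show that $H^1(\Gamma_n;(\mathbb{Z}/3)^2) = 0$, or at least that our particular $c$ is a coboundary. This is where the hypothesis $-I \in \Gamma_n$ enters. The element $-I$ is central in $\Gamma_n$ and acts on $(\mathbb{Z}/3)^2$ as multiplication by $-1$, so $-I - 1$ acts as $-2$, which is invertible mod $3$. The standard center-kills argument then applies: conjugation by the central element $-I$ acts trivially on $H^1(\Gamma_n;M)$, but it also acts as $-1$ via the module. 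Hence $2$ annihilates $H^1$. Since $H^1$ is a $3$-group, this forces $H^1 = 0$.

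Concretely, set $w = -(-I-1)^{-1}c(-I) = 2^{-1}c(-I)$ (with the sign conventions to be checked). Applying the cocycle identity to $g(-I) = (-I)g$ gives $(g-1)c(-I) = (-I-1)c(g) = -2c(g)$, so $c(g) = (g-1)w'$ with $w' = -2^{-1}c(-I)$. This explicit computation is the main step, and getting the signs right there is the only delicate point; no real obstacle is expected.

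\textbf{Step 3 (conclusion).} After conjugating by $(-w',1)$, the group $\tilde{\Gamma}_n$ becomes $\{0\}\rtimes\Gamma_n \subset \tilde{\Gamma}_\mathrm{flex}$. Lifting this containment back to $\pi_1(\mathcal{X})$ by taking preimages under the quotient by $Z(K)$, we get that $\pi_1(\tilde{\mathcal{X}}_n)$ is conjugate into $\pi_1(\tilde{\mathcal{X}}_\mathrm{flex})$. Conjugation here just amounts to changing the basepoint in the fiber of $\tilde{\mathcal{X}}_\mathrm{flex}$, and the lifting criterion then produces the desired $f$.
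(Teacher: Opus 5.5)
Your proposal is correct and follows essentially the paper's proof. Like the paper, you reduce to conjugating $\tilde{\Gamma}_n$ into $\tilde{\Gamma}_\mathrm{flex}$ inside $(\mathbb{Z}/3\mathbb{Z})^2 \rtimes \Gamma_n$, and then show $H^1(\Gamma_n;(\mathbb{Z}/3\mathbb{Z})^2)=0$ by applying the cocycle identity to $g(-I)=(-I)g$ and inverting $2$ mod $3$. Your index count showing $\tilde{\Gamma}_n \cap (\mathbb{Z}/3\mathbb{Z})^2 = 0$ usefully makes explicit a step the paper leaves implicit, namely that $\tilde{\Gamma}_n$ is the image of a splitting.
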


\begin{proof}
The connected covering spaces of $\mathcal{X}$ are classified by their fundamental groups. It suffices to prove that $\pi_1(\tilde{\mathcal{X}}_n)$ can be conjugated into $\pi_1(\tilde{\mathcal{X}}_\mathrm{flex}) \subset \pi_1(\mathcal{X}) $. Since all three groups contain $Z(K)$, we only need to prove that $\tilde{\Gamma}_n$ can be conjugated into $\tilde{\Gamma}_\mathrm{flex} \subset \tilde{\Gamma}$.

For the split exact sequence 
\[ 1 \rightarrow (\mathbb{Z}/3\mathbb{Z})^2 \rightarrow (\mathbb{Z}/3\mathbb{Z})^2 \rtimes \Gamma_n \rightarrow \Gamma_n \rightarrow 1 ,\]
the conjugacy classes of group-theoretic sections $s:\Gamma_n \rightarrow (\mathbb{Z}/3\mathbb{Z})^2 \rtimes \Gamma_n$ are in bijection with $H^1(\Gamma_n;(\mathbb{Z}/3\mathbb{Z})^2)$. Recall that a cocycle in $Z(\Gamma_n;(\mathbb{Z}/3\mathbb{Z})^2)$ is a map $\phi: \Gamma \rightarrow (\mathbb{Z}/3\mathbb{Z})^2$ that satisfies $\phi(gh)= \phi(g) + g \cdot \phi(h) $ for any $g,h \in \Gamma_n$, and a coboundary in $B(\Gamma_n;(\mathbb{Z}/3\mathbb{Z})^2)$ is a map $\phi$ defined as $\phi(g)=g \cdot v-v$ for some $v \in (\mathbb{Z}/3\mathbb{Z})^2$. Given a cocycle $\phi$, we have
\[ \phi((-I) \cdot g) = \phi(-I) - \phi(g),  \ \ \
   \phi(g \cdot  (-I)) = \phi(g) + g \cdot  \phi(-I). \]
   
Subtracting two equations we get $2\phi(g) = \phi(-I) - g \cdot \phi(-I). $ Because $2=-1$ in $\mathbb{Z}/3\mathbb{Z}$, we have $\phi(g) = g \cdot \phi(-I) - \phi(-I). $ So every cocycle is a coboundary, hence $H^1(\Gamma_n;(\mathbb{Z}/3\mathbb{Z})^2)=0$, and any section $\Gamma_n \rightarrow (\mathbb{Z}/3\mathbb{Z})^2 \rtimes \Gamma_n$ is conjugate to the standard section $\Gamma_n \rightarrow (\mathbb{Z}/3\mathbb{Z})^2 \rtimes \Gamma_n, \ g \mapsto (0,g)$. This proves the proposition.

\end{proof}

\subsection{$H^1(\Gamma_n; \mathbb{Z}^2)$ and section maps}

In this subsection, we study the properties of $H^1(\tilde{\mathcal{X}}_n; \mathbb{Z}^2)$. This cohomology group is useful because of the following result in obstruction theory:

\begin{theorem}[See \textit{e.g.} Corollary 6.16 in \cite{Whitehead}] \label{obstruction}
Let $\pi:E \rightarrow B$ be a fiber bundle over a connected base, whose fiber is a $K(G,1)$ for an abelian group $G$. Let $\phi:K \rightarrow B$ be a continuous map. Suppose that $\phi$ can be lifted to a map $s:K \rightarrow E$, that is, there exists a map $s : K \rightarrow E$ such that $\pi \circ s=\phi$. Then the homotopy classes of liftings $s_n$ are in bijection with $H^1(E;G)$. 

\end{theorem}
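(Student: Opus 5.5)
The plan is to run classical obstruction theory for lifts through a fibration whose fibre is an Eilenberg--MacLane space $K(G,1)$, reading $H^1(E;G)$ as it is used here. The group $G=\pi_1$ of the fibre is twisted by the monodromy of $E$, and the relevant cohomology is that of the lifting problem for $\phi$, that is, of the pulled-back bundle $\phi^*E\to K$. I am not certain this is the intended reading of $H^1(E;G)$, and I flag it as an assumption. I take it because of a point check: for $K$ a point there is exactly one homotopy class of lifts, since the fibre is connected, while the literal group $H^1(E;G)$ of the total space need not vanish. This is also how the statement is applied in Subsection 3.2, where the group is $H^1(\tilde{\mathcal{X}}_n;\mathbb{Z}^2)$ with coefficients in the monodromy of $p_n^*E$.

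\textbf{Step 1 (reduction).} Replace the problem by sections of $\phi^*E\to K$. Lifts $s$ of $\phi$ with $\pi\circ s=\phi$ correspond to sections of $\phi^*E$, and fibrewise homotopies correspond exactly. So I work with a bundle over $K$ with fibre $F=K(G,1)$ and a given base section $s_0$, which exists because a lift $s$ is assumed.

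\textbf{Step 2 (difference classes).} Assume $K$ is a CW complex, which is enough in our application since all spaces are manifolds. Given a second section $s_1$, build the primary difference cochain $d(s_0,s_1)$ inductively over skeleta.
\begin{itemize}
\item Over each $0$-cell the two sections already lie in a connected fibre, so a fibrewise homotopy exists.
\item Over each $1$-cell the homotopies at the endpoints give, together with $s_0$ and $s_1$, a loop in the fibre. Its class in $\pi_1(F)=G$, transported along the local system, defines $d\in C^1(K;G)$.
\item The cochain $d$ is a cocycle, because over a $2$-cell the boundary loop bounds a disk in the total space.
\item Changing the $0$-cell homotopies changes $d$ by a coboundary.
\end{itemize}
Higher obstructions lie in $H^{k}(K;\pi_{k-1}F)$ for $k\ge3$, and these vanish because $\pi_{j}F=0$ for $j\ge2$. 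So $s_0\simeq s_1$ if and only if $[d]=0$.

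\textbf{Step 3 (surjectivity).} Given any cocycle $c$, modify $s_0$ on each $1$-cell by inserting the fibre loop $c(e)$. The obstruction to extending over each $2$-cell is $\delta c=0$, and all higher obstructions vanish as in Step 2. This realizes every class, so $s\mapsto[d(s_0,s)]$ is the claimed bijection, with $s_0\mapsto 0$.

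The main obstacle is Step 2: keeping the local coefficient system straight, since $\pi_1(F)$ is acted on by $\pi_1(K)$ through monodromy. One also has to check that $\pi_1(F)$ acts trivially on itself, so that free and based homotopy classes agree. This uses that $G$ is abelian, and it is exactly where the abelian hypothesis enters.
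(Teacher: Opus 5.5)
Your proposal is correct, and it is the standard obstruction-theoretic argument behind the cited Whitehead corollary: difference cochains built over the $1$-skeleton, higher obstructions vanishing because $\pi_j F=0$ for $j\ge 2$, and every cocycle realized by twisting $s_0$ over $1$-cells. The paper itself gives no proof beyond the citation. Your reading of $H^1(E;G)$ as $H^1(K;\mathcal{G})$, with local coefficients coming from $\phi^*E$, is the right one; the $E$ in the printed statement should be $K$, as your point check shows, and this is exactly how the result is used for $H^1(\tilde{\mathcal{X}}_n;\mathbb{Z}^2)$.

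Two small things to tidy:
\begin{itemize}
\item The obstructions to extending a vertical homotopy over $e^k\times I$ lie in $C^k(K;\pi_k F)$, not $H^k(K;\pi_{k-1}F)$. This is harmless, since they vanish for $k\ge 2$.
\item For $s\mapsto[d(s_0,s)]$ to be well defined on homotopy classes and injective, you need the routine additivity $d(s_0,s_2)=d(s_0,s_1)+d(s_1,s_2)$. This uses $G$ abelian to identify the coefficient systems based at different sections.
\end{itemize}
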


The method of finding virtual sections can be divided into two steps: First, find all possible sections up to homotopy; second, check whether these sections can be homotoped to an injective one. Theorem \ref{obstruction} gives us a useful tool for the first step: Take $\pi$ to be the universal cubic curve $E \rightarrow \mathcal{X}$, and $\phi = p_n$. By Proposition \ref{covering_map}, we have a covering map $f:\tilde{\mathcal{X}}_n \rightarrow \tilde{\mathcal{X}}_\mathrm{flex}$. Define $s_n= s_\mathrm{flex} \circ f$, then $s_n$ is a lift of $p_n$. Because $C_F \cong T^2$ is $K(\mathbb{Z}^2,1)$, every element in $H^1(\tilde{\mathcal{X}}_n;\mathbb{Z}^2)$ represents a homotopy class of the section map $s_n$, although possibly not injective. For a section map $s_n$, we use $[s_n]$ to represent the element in $H^1(\Gamma_n;\mathbb{Z}^2)$ that corresponds to the homotopy class containing $s_n$.

By Lemma 4.2 of \cite{B_Chen}, $H^1(\tilde{\mathcal{X}}_n; \mathbb{Z}^2) \cong H^1(\Gamma_n; \mathbb{Z}^2).$ For $\Gamma_n \subset \mathrm{SL}_2(\mathbb{Z})$ that contains $-I$, the cohomology group is relatively simple. In fact, we will prove the following proposition:

\begin{proposition}
Given a virtual section $\tilde{\mathcal{X}}_n$, if its monodromy group $\Gamma_n$ is a subgroup containing $-I$, then $H^1(\Gamma_n;\mathbb{Z}^2)$ is $1, \mathbb{Z}/2\mathbb{Z},$ or $(\mathbb{Z}/2\mathbb{Z})^2$.
\end{proposition}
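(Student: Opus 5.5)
Our approach is to reuse the $-I$ trick from the proof of Proposition \ref{covering_map}, now with $\mathbb{Z}^2$ coefficients. There $2$ was invertible; here it is not, and that is exactly where $2$-torsion appears. Let $\phi \in Z^1(\Gamma_n;\mathbb{Z}^2)$ be a cocycle. Since $-I$ is central in $\Gamma_n$, we compare the two expansions
\[ \phi((-I)g)=\phi(-I)+(-I)\cdot\phi(g)=\phi(-I)-\phi(g), \qquad \phi(g(-I))=\phi(g)+g\cdot\phi(-I). \]
Equating them gives
\[ 2\phi(g)=\phi(-I)-g\cdot\phi(-I) \]
for all $g\in\Gamma_n$. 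Set $w=\phi(-I)\in\mathbb{Z}^2$.

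\textbf{Step 1: $2$ kills $H^1$.} The map $g\mapsto 2\phi(g)=g\cdot(-w)-(-w)$ is a coboundary. Hence $2[\phi]=0$, so $H^1(\Gamma_n;\mathbb{Z}^2)$ is $2$-torsion.

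\textbf{Step 2: an injection into $\mathbb{Z}^2/2\mathbb{Z}^2$.} Define $\Phi:H^1(\Gamma_n;\mathbb{Z}^2)\to(\mathbb{Z}/2\mathbb{Z})^2$ by $[\phi]\mapsto \phi(-I)\bmod 2$.

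First, $\Phi$ is well defined. If $\phi(g)=g\cdot v-v$ is a coboundary, then $\phi(-I)=-2v\equiv 0 \pmod 2$.

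Second, $\Phi$ is a homomorphism, since $\phi\mapsto\phi(-I)$ is additive.

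Third, $\Phi$ is injective. Suppose $\phi(-I)=2u$ for some $u\in\mathbb{Z}^2$. Then $2\phi(g)=2u-g\cdot(2u)=2(u-g\cdot u)$. Because $\mathbb{Z}^2$ is torsion-free, this gives $\phi(g)=u-g\cdot u=g\cdot(-u)-(-u)$, so $\phi$ is a coboundary.

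It follows that $H^1(\Gamma_n;\mathbb{Z}^2)$ embeds into $(\mathbb{Z}/2\mathbb{Z})^2$. The only subgroups of $(\mathbb{Z}/2\mathbb{Z})^2$ are $1$, $\mathbb{Z}/2\mathbb{Z}$, and $(\mathbb{Z}/2\mathbb{Z})^2$, which is the claim.

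There is no serious obstacle. The one point needing care is the injectivity step, where dividing by $2$ uses that $\mathbb{Z}^2$ has no torsion. One should also note that $H^1(\Gamma_n;\mathbb{Z}^2)$ here means group cohomology with the standard action, justified by Lemma 4.2 of \cite{B_Chen}. If desired, one could further identify the image of $\Phi$: it consists of those $w$ mod $2$ for which the resulting cocycle condition is solvable, which depends on the image of $\Gamma_n$ in $\mathrm{SL}_2(\mathbb{Z}/2\mathbb{Z})$. This refinement is not needed for the proposition.
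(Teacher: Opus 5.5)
Your proof is correct and is essentially the paper's argument: the centrality of $-I$ gives $2\phi(g)=\phi(-I)-g\cdot\phi(-I)$, and evaluation at $-I$ then identifies $H^1(\Gamma_n;\mathbb{Z}^2)$ with a subgroup of $\mathbb{Z}^2/(2\mathbb{Z})^2$, since coboundaries account for exactly $(2\mathbb{Z})^2$. You merge the paper's two steps (injectivity of $\phi\mapsto\phi(-I)$ on cocycles, and $(2\mathbb{Z})^2\subset B(\Gamma_n;\mathbb{Z}^2)$) into a single injectivity check that makes the use of torsion-freeness explicit, and your Step 1 is redundant but harmless.
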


\begin{proof}
Recall the definition of cocycle and coboundary: A map $\phi: \Gamma_n \rightarrow \mathbb{Z}^2$ is a cocycle if it satisfies $\phi(gh)= \phi(g) + g \cdot \phi(h)$, and is a coboundary if $\phi(g)=g \cdot v-v$ for some $v \in (\mathbb{Z}/3\mathbb{Z})^2$.

For any cocycle $\phi$, we have the two following equations:
\[ \phi((-I) \cdot g) = \phi(-I) - \phi(g),  \ \ \
   \phi(g \cdot  (-I)) = \phi(g) + g \cdot  \phi(-I). \]
   
Subtracting two equations we get $2\phi(g) = \phi(-I) - g \cdot \phi(-I).$ Therefore, the homomorphism $Z(\Gamma_n;\mathbb{Z}^2) \rightarrow \mathbb{Z}^2, \ \phi \mapsto \phi(-I)$ is injective. Moreover, under this embedding, we claim that $(2\mathbb{Z})^2 \subset B(\Gamma_n;\mathbb{Z}^2)$. This is because for any $2v \in (2\mathbb{Z})^2$,
we have $\phi(g) = v - g \cdot v$, and $\phi$ is a coboundary. Therefore, we have
\[ 
H^1(\Gamma_n;\mathbb{Z}^2) = \frac{Z(\Gamma_n;\mathbb{Z}^2)}{B(\Gamma_n;\mathbb{Z}^2)} \subset \frac{\mathbb{Z}^2}{(2\mathbb{Z})^2} = (\mathbb{Z}/2\mathbb{Z})^2,
\]
and any subgroup of $(\mathbb{Z}/2\mathbb{Z})^2$ can only be $1, \mathbb{Z}/2\mathbb{Z},$ or $(\mathbb{Z}/2\mathbb{Z})^2$.
\end{proof}

By this proposition, we know that if $-I \in \Gamma_n$, any element in $H^1(\Gamma_n;\mathbb{Z}^2)$ is either trivial or a 2-torsion.

Now we study the properties of $ \tilde{\mathcal{X}}_\mathrm{flex}$ and $ \tilde{\mathcal{X}}_\mathrm{sext}$. The following propositions focusing on $ \tilde{\mathcal{X}}_\mathrm{flex}$ and $ \tilde{\mathcal{X}}_\mathrm{sext}$ will be useful to prove Proposition \ref{homotopic_sectionmap}.
\vspace{2mm}

\begin{proposition} [Proposition 5.2, \cite{B_Chen}] \label{flex}
$\ $

\begin{enumerate} \setlength{\itemindent}{2em}
    \item  [(a).]   $ \tilde{\mathcal{X}}_\mathrm{flex}$ is connected. 
    \item  [(b).]    $\Gamma_\mathrm{flex} \cong \mathrm{SL}_2(\mathbb{Z})$.
    \item  [(c).]   $H^1(\Gamma_\mathrm{flex};\mathbb{Z}^2) = 0$. 
    \item  [(d).]  $[s_\mathrm{flex}]=0.$  
\end{enumerate}

\end{proposition}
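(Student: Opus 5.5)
We will take the four parts in order, using only the structure of $\tilde{\Gamma}$ and $\Lambda$ recorded in properties (1)--(3) of Proposition 3.8 of \cite{B_Chen}, together with the cocycle trick from the proof of Proposition 3.4. For (a), it suffices to show that $\pi_1(\mathcal{X})$, or equivalently $\tilde{\Gamma}$, acts transitively on the $9$ inflection points of a base curve $C_F$. The normal subgroup $(\mathbb{Z}/3\mathbb{Z})^2 \subset \tilde{\Gamma}$ is the image of $K$. In the Hesse normal form, the two generators of $K$ act on $C_F$ as translations by $3$-torsion points, taking a chosen flex as origin. The flexes are exactly the $3$-torsion points, so they form a single $(\mathbb{Z}/3\mathbb{Z})^2$-torsor, and the action is transitive. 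Hence $\tilde{\mathcal{X}}_\mathrm{flex}$ is connected, and $\tilde{\Gamma}_\mathrm{flex}$ is the stabilizer of a flex, a subgroup of index $9$ in $\tilde{\Gamma}$.

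For (b), we identify this stabilizer with the complement $\tilde{\Gamma}_\mathrm{flex}$ in the splitting $\tilde{\Gamma} \cong (\mathbb{Z}/3\mathbb{Z})^2 \rtimes \tilde{\Gamma}_\mathrm{flex}$ of property (1). The point is that the translation subgroup meets the stabilizer of a point trivially, and the quotient map to $\Gamma = \mathrm{SL}_2(\mathbb{Z})$ is $\rho$. Consequently $\rho$ restricted to $\tilde{\Gamma}_\mathrm{flex}$ is injective. It is also surjective, since the stabilizer has index $9 = |(\mathbb{Z}/3\mathbb{Z})^2|$. Therefore $\Gamma_\mathrm{flex} = \mathrm{SL}_2(\mathbb{Z})$.

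For (c), we use $-I \in \mathrm{SL}_2(\mathbb{Z})$ exactly as in the proof of Proposition 3.4. A cocycle $\phi$ is determined by $v = \phi(-I)$, via $2\phi(g) = v - g v$. Conversely, a vector $v$ arises from a cocycle precisely when $v - gv \in 2\mathbb{Z}^2$ for all $g$; in that case $\phi(g) = \tfrac12(v - gv)$ is a cocycle. The coboundaries correspond to $v \in 2\mathbb{Z}^2$. Take
\[
g = T = \begin{bmatrix} 1 & 1 \\ 0 & 1 \end{bmatrix},
\]
which gives $v - Tv = (-v_2, 0)$ and so forces $v_2$ to be even. The transpose $T^{t}$ similarly forces $v_1$ to be even. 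Hence $v \in 2\mathbb{Z}^2$, every cocycle is a coboundary, and $H^1(\Gamma_\mathrm{flex};\mathbb{Z}^2) = 0$. Lemma 4.2 of \cite{B_Chen} then transfers this vanishing to $H^1(\tilde{\mathcal{X}}_\mathrm{flex};\mathbb{Z}^2)$.

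For (d), note that $[s_\mathrm{flex}]$ lies in a group which (c) shows is trivial, so it vanishes immediately. With the paper's convention that classes are measured relative to the base lift $s_\mathrm{flex}$, this is in any case tautological. The only delicate step is (b), namely making precise that the point-stabilizer is a complement to the translation subgroup. We would handle this through the explicit semidirect product description of property (1), or simply cite Proposition 5.2 of \cite{B_Chen}.
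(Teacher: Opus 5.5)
Your proposal is correct. The paper gives no argument of its own for this statement. It imports (a)--(d) from the proof of Proposition 5.2 in \cite{B_Chen}, and (b) is essentially property (1) of Proposition 3.8 there. So your write-up is a self-contained reconstruction rather than a variant of anything in the text.

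What your route buys:
\begin{itemize}
\item Part (c) becomes literally the $-I$ computation the paper runs in Proposition 3.4 and Proposition~\ref{sext}(c), with $T$ and $T^{t}$ forcing $\phi(-I)\in 2\mathbb{Z}^2$.
\item Part (d) then needs no geometry at all. Contrast the sextatic case, where $H^1(\Gamma_\mathrm{sext};\mathbb{Z}^2)=\mathbb{Z}/2\mathbb{Z}$ forces the mapping-torus argument of Lemma~\ref{mapping_torus}.
\end{itemize}

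Two adjustments.

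\textbf{The step you call delicate in (b) is already closed by your own count.} Falling back on property (1) would just amount to citing (b). Write $S=\tilde\Gamma_\mathrm{flex}$. Since $S$ meets $K/Z(K)$ trivially, $[S\cdot(K/Z(K)):S]=9=[\tilde\Gamma:S]$. Hence $S\cdot (K/Z(K))=\tilde\Gamma$, and $\rho(S)=\rho(\tilde\Gamma)=\mathrm{SL}_2(\mathbb{Z})$ by Dolgachev--Libgober, with $\rho|_S$ injective.

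\textbf{The genuine geometric input is in (a), and you should state it.} Take $F_0$ in Hesse form. The copy of $K$ in $\pi_1(\mathcal{X},F_0)$ is represented by loops $t\mapsto g_t\cdot F_0$, where $g_t$ is a path in $\mathrm{SL}_3(\mathbb{C})$ from $I$ to $k\in K$. The monodromy on $C_{F_0}$ is therefore the projective action of $k$. This is what makes the generators act as fixed-point-free translations by $3$-torsion points. It also shows that $Z(K)$, acting by scalars, is trivial on $\mathbb{P}^2$. So the action on flexes descends to $\tilde\Gamma$, and $\tilde\Gamma_\mathrm{flex}$ really is the stabilizer of a flex.
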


Notice that Proposition \ref{flex} is not the original statement of Proposition 5.2 in \cite{B_Chen}, but rather some claims that were stated and proved during the proof of Proposition 5.2 in \cite{B_Chen}.

\vspace{2mm}

\begin{proposition} \label{sext}
$\ $

\begin{enumerate} \setlength{\itemindent}{2em}
    \item  [(a).]   $\tilde{\mathcal{X}}_\mathrm{sext}$ is connected.
    
    \item  [(b).]  By properly selecting a basis of $H_1(C_F; \mathbb{Z})$,
     \[ \Gamma_\mathrm{sext} \cong \{ \ g \in \mathrm{SL}_2(\mathbb{Z}) \ | \ g = \begin{bmatrix} 1 & 0 \\ 
     0 & 1
     \end{bmatrix} \ \mathrm{or} \  \begin{bmatrix}
     1 & 1 \\
     0 & 1
     \end{bmatrix}  \  \mathrm{mod}  \  2 \  \}. \]
     
    \item  [(c).]  $H^1(\Gamma_\mathrm{sext};\mathbb{Z}^2) = \mathbb{Z}/2\mathbb{Z}$, the nontrivial element is represented by the cocycle \[ 
     \phi(g) = \frac{1}{2} (g \cdot \begin{bmatrix}
     1 \\ 0
     \end{bmatrix} - \begin{bmatrix}
     1 \\ 0
     \end{bmatrix}).
     \]  

    \item  [(d).]  $[s_\mathrm{sext}]$ is the nontrivial element in $H^1(\Gamma_\mathrm{sext};\mathbb{Z}^2)$.

\end{enumerate}

\end{proposition}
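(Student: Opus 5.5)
We prove the four parts in order, working on a single fiber. Fix a base point $F_0 \in \mathcal{X}$ and an inflection point $p_0 \in C_{F_0}$, which makes $C_{F_0}$ a group $\cong \mathbb{R}^2/\mathbb{Z}^2 = H_1(C_{F_0};\mathbb{R})/H_1(C_{F_0};\mathbb{Z})$. The fiber of $\tilde{\mathcal{X}}_\mathrm{sext}$ over $F_0$ is the set of $27$ points $q$ with $3q$ a nonzero $2$-torsion point. Write $q = u + w$, where $u$ is a $3$-torsion point (a flex minus $p_0$) and $w$ is a nonzero $2$-torsion point. This decomposition is unique because $\gcd(2,3)=1$, so the fiber is identified with $\{\text{flexes}\} \times \big((\tfrac12\mathbb{Z}^2/\mathbb{Z}^2)\setminus 0\big)$.

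\emph{Part (a).} By property (1), $\tilde{\Gamma} \cong (\mathbb{Z}/3)^2 \rtimes \mathrm{SL}_2(\mathbb{Z})$. This group acts on the fiber as follows: $(\mathbb{Z}/3)^2$ translates the flex coordinate, and $\mathrm{SL}_2(\mathbb{Z})$ acts on the $2$-torsion coordinate through $\mathrm{SL}_2(\mathbb{Z}/2)$. The key input is property (3), which describes $\pi_*:\Lambda \to \tilde{\Gamma}$; it lets us read the monodromy action on torsion points of the fibers directly from the standard action on $\mathbb{Z}^2$. Now $(\mathbb{Z}/3)^2$ is transitive on the $9$ flexes, and $\mathrm{SL}_2(\mathbb{Z}/2) \cong S_3$ is transitive on the three nonzero $2$-torsion points. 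Hence the action is transitive on all $27$ points and $\tilde{\mathcal{X}}_\mathrm{sext}$ is connected.

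\emph{Part (b).} Take the stabilizer of a point $q_0 = p_0 + w_0$ with $w_0 = \tfrac12 e_1$. Stabilizing $p_0$ cuts $\tilde{\Gamma}$ down to $\tilde{\Gamma}_\mathrm{flex} \cong \mathrm{SL}_2(\mathbb{Z})$, and inside it we need $g$ with $g e_1 \equiv e_1 \pmod 2$. These are exactly the matrices congruent to $\begin{bmatrix}1&*\\0&1\end{bmatrix} \bmod 2$, which gives (b). As a check, the index is $3 = 27/9$.

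\emph{Part (c).} Since $-I \in \Gamma_\mathrm{sext}$, the computation in the previous proposition embeds $Z(\Gamma_\mathrm{sext};\mathbb{Z}^2)$ into $\mathbb{Z}^2$ via $\phi \mapsto \phi(-I)$, with $2\phi(g) = \phi(-I) - g\phi(-I)$. Writing $\phi(-I) = 2v'$ for $v' \in \tfrac12\mathbb{Z}^2$, this forces $\phi(g) = v' - g v'$. So $\phi$ is a cocycle exactly when $v' - gv' \in \mathbb{Z}^2$ for all $g \in \Gamma_\mathrm{sext}$, and $H^1 \cong \{v' \in \tfrac12\mathbb{Z}^2 : v' \bmod \mathbb{Z}^2 \text{ fixed by } \Gamma_\mathrm{sext}\}/\mathbb{Z}^2$. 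The $2$-torsion vectors fixed by $\Gamma_\mathrm{sext}$ are $0$ and $\tfrac12 e_1$, by (b). So $H^1 = \mathbb{Z}/2\mathbb{Z}$, with nontrivial class given by $v' = -\tfrac12 e_1$ (equivalent mod $\mathbb{Z}^2$ to $\tfrac12 e_1$). This yields the stated cocycle $\phi(g) = \tfrac12(g e_1 - e_1)$.

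\emph{Part (d).} This is the main obstacle, because we must identify the obstruction-theoretic class of $s_\mathrm{sext}$ relative to the reference lift $s_\mathrm{flex}\circ f$. Here $f:\tilde{\mathcal{X}}_\mathrm{sext}\to\tilde{\mathcal{X}}_\mathrm{flex}$ sends $(F,q)\mapsto(F,u)$, and it exists by Proposition \ref{covering_map}. The plan is to compare the two lifts fiberwise. Over every point, $s_\mathrm{sext}(F,q) - s_\mathrm{flex}(f(F,q)) = w$ is a $2$-torsion point. Choose a path in $C_{F_0}$ from $u$ to $q$, namely the straight segment lifting to $\tfrac12 e_1 \in \mathbb{R}^2$. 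Transport it around a loop $g \in \tilde{\Gamma}_\mathrm{sext}$. The difference-cocycle measuring the failure of these paths to close up is then $g\cdot\tfrac12 e_1 - \tfrac12 e_1 \in \mathbb{Z}^2$, which is $\phi$ from (c).

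To make this rigorous, we use the description of $\Lambda = \mathbb{Z}^2 \rtimes \tilde{\Gamma}_\mathrm{flex}$ from property (2). Under this description, a lift corresponds to a splitting $\tilde{\Gamma}_\mathrm{sext} \to \Lambda$, and the splittings induced by $s_\mathrm{flex}\circ f$ and by $s_\mathrm{sext}$ differ by $\phi$. It then remains to check that $\phi$ is not a coboundary. Since $\tfrac12 e_1 \notin \mathbb{Z}^2$, this follows from the computation in (c).
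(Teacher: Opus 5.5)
Your proposal is correct. It departs from the paper mainly in part (d).

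For (a) and (b) the paper simply cites Banerjee--Chen (Lemma 6.2 with $m=2$). You instead give a self-contained orbit--stabilizer argument on the $27$ points $u+w$, and it works. Strictly, $\tilde{\Gamma}_{\mathrm{flex}}$ also moves the flex coordinate via $g \bmod 3$, but this affects neither transitivity nor the stabilizer of $p_0+\tfrac12 e_1$. Part (c) is the paper's argument in different notation.

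For (d), the paper tests the class on a single loop with monodromy $-I$. It models the pulled-back bundle as the mapping torus of $v\mapsto -v$ (Lemma~\ref{mapping_torus}) and identifies the sextatic loops as constant half-periods. It then writes $(\phi(-I),-I)=(\phi(-I),I)\cdot(0,-I)$ in $\Lambda$ and uses an explicit homotopy to show the fiber part is $2\tilde v$ with $\tilde v\notin\mathbb{Z}^2$. Since a cocycle is determined by $\phi(-I)$ and coboundaries have $\phi(-I)\in 2\mathbb{Z}^2$, that suffices.

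In effect, you compute the difference cocycle on all loops at once. Homotopy classes of fiberwise paths from $s_{\mathrm{flex}}\circ f$ to $s_{\mathrm{sext}}$ form a local system of $H_1(C_{F_0};\mathbb{Z})$-torsors. Straight segments identify this with $\tfrac12 e_1+\mathbb{Z}^2$ under the linear action, so the class is exactly $[g\mapsto g\cdot\tfrac12 e_1-\tfrac12 e_1]$. This pins down the representative named in (c) rather than only showing nontriviality, and it needs no ad hoc homotopy. The paper's route, in exchange, only has to examine one explicit mapping torus.

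One caveat: your sentence invoking property (2) restates the conclusion rather than proving it. What actually justifies the transport step is the general-$g$ version of Lemma~\ref{mapping_torus}(a),(c). Over $[0,1]$ the bundle is trivialized by group isomorphisms with the flex as origin. So torsion sections are constant, and the segment from $0$ to $\tfrac12 e_1$ is carried to the segment from $0$ to $g\cdot\tfrac12 e_1$.

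It is also safer to stay in the fiber lattice than to pass through $\Lambda$. By property (3), any loop in a fiber lies in $\ker\pi_*=(3\mathbb{Z})^2\times\{I\}$, so $\pi_1(C_{F_0})$ is not literally the $\mathbb{Z}^2$ factor of $\Lambda$. This is harmless mod $2$, but it is bookkeeping your argument avoids.
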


(a) and (b) of Proposition \ref{sext} are a special case of \cite{B_Chen}, Lemma 6.2 (taking $m=2$). We will prove (c) and (d) below.

\begin{proof}[Proof of Proposition \ref{sext}, (c)]

In the proof of Proposition 3.4, we have shown that for any cocycle $\phi: \Gamma_{\mathrm{sext}} \rightarrow \mathbb{Z}^2$, we have $2 \phi(g) = g \cdot v - v$ for some $v \in \mathbb{Z}^2$. Given a vector $v$, the cocycle $\phi(g) = \frac{1}{2} ( g \cdot v - v )$ exists if and only if  $( g \cdot v - v ) \in  (2\mathbb{Z})^2$ for any $g \in \Gamma_{\mathrm{sext}}$. 

\vspace{1mm}

If $v \in (2\mathbb{Z})^2$ then trivially $(g \cdot v - v ) \in  (2\mathbb{Z})^2$. One can verify that $(g \cdot v - v ) \in  (2\mathbb{Z})^2$ for $v = \begin{bmatrix}
     1 \\ 0
     \end{bmatrix}$, but not for $v = \begin{bmatrix}
     0 \\ 1
     \end{bmatrix}$ and $v = \begin{bmatrix}
     1 \\ 1
     \end{bmatrix}$. So all the cocycles are as form $\phi(g) = \frac{1}{2} ( g \cdot v - v )$, where $v =  \begin{bmatrix}
     0 \\ 0
     \end{bmatrix} $ or $ \begin{bmatrix}
     1 \\ 0
     \end{bmatrix}$ mod 2.

\vspace{2mm}

Furthermore, the cocycle $\phi(g) = \frac{1}{2} ( g \cdot v - v )$ is a coboundary if and only if  $v =  \begin{bmatrix}
     0 \\ 0
     \end{bmatrix} $ mod 2. Therefore $H^1(\Gamma_\mathrm{sext};\mathbb{Z}^2) = \mathbb{Z}/2\mathbb{Z}$, the nontrivial element is represented by the cocycle $ 
     \phi(g) = \frac{1}{2} (g \cdot \begin{bmatrix}
     1 \\ 0
     \end{bmatrix} - \begin{bmatrix}
     1 \\ 0
     \end{bmatrix}).$

\end{proof}

To prove (d) in Proposition \ref{sext}, we need the following lemma.

\begin{lemma} \label{mapping_torus}
Suppose $\gamma$ is a loop in $\tilde{\mathcal{X}}_{\mathrm{flex}}$ with base point $(F,p)$, whose monodromy action on $H_1(C_F; \mathbb{Z})$ is $-I$. Consider the pullback bundle of $E \rightarrow \mathcal{X}$ by $S^1 \xrightarrow{\gamma} \tilde{\mathcal{X}}_{\mathrm{flex}} \xrightarrow{p_{\mathrm{flex}}} \mathcal{X}$, where we denote it by $\gamma^*E$ (instead of $\gamma^* p_{\mathrm{flex}}^*  E $) for convenience. Then:

\begin{enumerate} \setlength{\itemindent}{1em}
    \item [(a).]  $\gamma^*E$ is homeomorphic to the mapping torus

$$ T_f = ([0,1] \times T^2) /\{ (0, v) \sim (1, -v) \, | \, v \in T^2 \}.$$

    \item [(b).]  There are exactly 3 sections of the bundle $\gamma^*E$ in which every point is a sextatic point.
    
    \item [(c).]  There exists a homeomorphism $\Psi: \gamma^*E \rightarrow T_f $ that maps each section given in (b) to a loop 
$$\bar{\gamma}_v : S^1 \rightarrow T_f, \,\, t \mapsto (t, v), $$
where $v \in \{ (\frac{1}{2},0), (0, \frac{1}{2}), (\frac{1}{2}, \frac{1}{2}) \}$.
\end{enumerate}

\end{lemma}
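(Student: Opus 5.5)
The plan is to use the group structure coming from the inflection point. Along $\gamma$ the base point $(F_t,p_t)$ moves continuously, and $p_t$ is always a flex of $C_{F_t}$. Making $p_t$ the identity turns each fiber $C_{F_t}$ into an elliptic curve, so $\gamma^*E$ becomes a family of complex tori $\mathbb{C}/L_t$ carrying a continuous zero section $t\mapsto p_t$.

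For (a), I would trivialize the pullback over $[0,1]$; this is possible since $[0,1]$ is contractible. I would choose the trivialization through the Abel--Jacobi map based at $p_t$, which gives a group isomorphism $C_{F_t}\cong \mathbb{R}^2/\mathbb{Z}^2$ depending continuously on $t$. The gluing map at $t=1$ is then a fiber homeomorphism $C_{F_0}\to C_{F_0}$ that fixes the origin $p_0$ and acts as $-I$ on $H_1$. Because it is a group isomorphism, it is the linear map $v\mapsto -v$ on $\mathbb{R}^2/\mathbb{Z}^2$. Concretely, the lattice basis is transported along $\gamma$ and returns as minus itself, so the identification $C_{F_t}\cong\mathbb{R}^2/\mathbb{Z}^2$ obtained via the Abel--Jacobi map in the transported basis closes up with the twist $v\mapsto -v$. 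This gives the homeomorphism $\Psi:\gamma^*E\to T_f$. The homotopy-invariance point to check is that the transport of the marked lattice along the loop is exactly the monodromy $\rho(\gamma)=-I$.

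For (b) and (c), I would first recall that with a flex $p$ as identity, a point $q$ is sextatic exactly when $3q$ is a nonzero $2$-torsion point. These are the $J_2(2)\cdot 9=27$ points, namely $q=a+b$ with $a$ a $3$-torsion point and $b$ one of the three nonzero $2$-torsion points. This is where I expect the main obstacle: I need a rigidity argument showing that among these $27$ points, a section lying entirely in sextatic points must have $a$ constant equal to $0$ in the trivialization. Since sextatic points form a covering, any section through them is one of the local continuous branches $v_t$ with $v_t\in\{a+b\}$. In the trivialization these branches are constant vectors $v\in(\tfrac13\mathbb{Z}/\mathbb{Z})^2+\{(\tfrac12,0),(0,\tfrac12),(\tfrac12,\tfrac12)\}$, because the $6$-torsion is discrete and the trivialization is by group isomorphisms.

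A constant $v$ closes up to a section of $T_f$ exactly when $v=-v$ in $\mathbb{R}^2/\mathbb{Z}^2$, i.e. when $v$ is $2$-torsion. Intersecting with the $27$ candidates leaves $a=0$, so $v\in\{(\tfrac12,0),(0,\tfrac12),(\tfrac12,\tfrac12)\}$. This gives exactly three sections, namely the loops $\bar\gamma_v$, proving (b) and (c) simultaneously. The remaining sextatic branches have $a\neq0$ and are carried to $-v\neq v$, so they form longer orbits rather than sections.
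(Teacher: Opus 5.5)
Your proposal is correct and follows essentially the same route as the paper. In both, $\gamma^*E$ is trivialized over $[0,1]$ by fiberwise group isomorphisms; you construct this explicitly via the Abel--Jacobi map in a transported homology basis, while the paper only asserts such a trivialization exists. The gluing map is then identified as $v\mapsto -v$ from $\rho(\gamma)=-I$. Finally, sextatic points sit at constant $6$-torsion values, and such a strand closes up exactly when $v=-v$, which leaves the three nonzero $2$-torsion points.
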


\begin{proof}

(a). Let $g: [0,1] \rightarrow S^1 \cong ([0,1]/ 0 \sim 1)$ be the quotient map, then $g^* \gamma^*E$ is a torus bundle over $[0,1]$, which is trivial. Moreover, we could assume that the homeomorphism $\Psi : g^* \gamma^*E \rightarrow [0,1] \times T^2$ preserves the additive structure on each fiber, that is, $\Psi$ restricted on the fiber of $g^* \gamma^*E$ at point $t$, denoted as $\Psi_t$, is a group isomorphism from the fiber to $T^2 \cong (\mathbb{R}/\mathbb{Z})^2$. Then $\Psi_0, \Psi_1$ are two isomorphisms from $(C_F,p)$ to $T^2$, and  $\gamma^*E$ is homeomorphic to 

$$T_f = ([0,1] \times T^2) /\{ (0, v) \sim (1, f(v)) \, | \, v \in T^2 \}$$
where $f = \Psi_1 \circ \Psi_0^{-1} $. For convenience, we also denote the homeomorphism from $\gamma^*E$ to $T_f$ as $\Psi$.

The action of $f$ on the homology group $f_*: H_1(T^2;\mathbb{Z}) \rightarrow H_1(T^2; \mathbb{Z})$ is equal to the monodromy action of $\gamma$. Because $\rho(\gamma)=-I$, we have $f_*=-I$. Moreover $f$ is a self-isomorphism of $T^2$, so $f$ must map $v$ to $-v$.

\vspace{1mm}

(b). Every section  of $\gamma^*E$ corresponds to a section $ [0,1] \rightarrow g^* \gamma^*E$ whose image at 0 and 1 are the same. The monodromy action of $\gamma$ on $H_1(C_F; \mathbb{Z}/6\mathbb{Z})$ is also $-I$, which implies that for a sextatic point $q \in C_F$, there exists a section of $g^* \gamma^*E$ that starts from $(0,q)$ and ends at $(1,-q)$, in which every point is a sextatic point. There are exactly 3 sextatic points $q$ that satisfy $q=-q$, so there are exactly 3 sections of the bundle $\gamma^*E$ in which every point is a sextatic point. 

\vspace{1mm}

(c). Let $\bar{\gamma} : S^1 \rightarrow \gamma^*E$ be one of the three sections given in (b). Then its base point is $(F,q)$, where $q$ is a 2-torsion.

Let $\Psi$ be the homeomorphism we have found in the proof of (a). $\Psi$ preserves the additive structure on each fiber, so it maps a sextatic point to $(t, v) \in T_f$ where $6v=(0,0) \in T^2$ and $3v \neq (0,0)$. Therefore, $\Psi$ maps the set of sextatic points to a disjoint union of lines $S^1 \times \{v\}$, where $6v = 0$ and $3v \neq 0$. The base point of $\Psi \circ \gamma$ is $(0,v)$, where $v$ is a 2-torsion, hence it must be one of the lines, so $\Psi(\gamma(t)) = (t,v)$ with $v \in \{ (\frac{1}{2},0), (0, \frac{1}{2}), (\frac{1}{2}, \frac{1}{2}) \}$.

\end{proof}

We are now ready to prove (d) of Proposition \ref{sext}.

\vspace{3mm}

\begin{proof}[Proof of Proposition \ref{sext}, (d)]

Recall that $\Gamma_\mathrm{flex} = \pi_1(\tilde{\mathcal{X}}_\mathrm{flex})/Z(K) \cong \mathrm{SL}_2(\mathbb{Z})$, $\Gamma_\mathrm{sext} = \pi_1(\tilde{\mathcal{X}}_\mathrm{sext}) /Z(K)$ is a subgroup of $\mathrm{SL}_2(\mathbb{Z})$, and $\Lambda = \pi_1(E)/Z(K) \cong \mathbb{Z}^2 \rtimes \mathrm{SL}_2(\mathbb{Z})$. Also, $Z(K)$ lies in the center of both $\pi_1(\tilde{\mathcal{X}}_\mathrm{flex}), \pi_1(\tilde{\mathcal{X}}_\mathrm{sext})$ and $\pi_1(E)$, so $\pi_1(\tilde{\mathcal{X}}_\mathrm{flex}) = \Gamma_\mathrm{flex} \times Z(K) $, $\pi_1(\tilde{\mathcal{X}}_\mathrm{sext}) = \Gamma_\mathrm{sext} \times Z(K)$, and $\pi_1(E) = \Lambda \times Z(K)$.

In $\Lambda$ we have $(\phi(-I) , -I) = (\phi(-I) , I) \cdot (0, -I)$. Take the base point of $E$ to be $(F,p)$, an inflection point. Let $ \gamma_{\mathrm{sext}}, \gamma_C, \gamma_{\mathrm{flex}}$ be loops in $E$, whose homotopy class are $((\phi(-I) , -I),0), ((\phi(-I) , I),0), ((0, -I),0)$ respectively. Then they satisfy the following four conditions:

\begin{enumerate} \setlength{\itemindent}{1em}
    \item [(1).] The embedding of the fiber $C_F$ into $E$ induces an injection $\pi_1(C_F) \rightarrow \Lambda, v \mapsto (v,I)$. In particular, $(\phi(-I) , I)$ is an image of this injection, so $\gamma_C$ is homotopic to a loop completely in $C_F$.
    \item [(2).] The section map $s_{\mathrm{flex}}:  \tilde{\mathcal{X}}_\mathrm{flex} \rightarrow E$ induces an injection $\Gamma_\mathrm{flex} \rightarrow \Lambda, g \mapsto (0 , g)$. In particular, $(0, -I)$ is an image of this injection, so $\gamma_{\mathrm{flex}}$ is homotopic to a loop in which every point is an inflection point in $E$.
    \item [(3).] The section map $s_{\mathrm{sext}}:  \tilde{\mathcal{X}}_\mathrm{sext} \rightarrow E$ induces an injection $\Gamma_\mathrm{sext} \rightarrow \Lambda, g \mapsto (\phi(g) , g) $, where $\phi$ is a cocycle, and is a coboundary if and only if $[s_{\mathrm{sext}}]$ is trivial. In particular, $(\phi(-I), -I)$ is an image of this injection, so $\gamma_{\mathrm{sext}}$ is homotopic to $l_1 \cdot \bar{\gamma} \cdot l_1^{-1}$,  where $\bar{\gamma}$ is a loop in which every point is a sextatic point, $l_1$ is a path in $C_F$ from $(F,p)$ to the base point of $\bar{\gamma}$, and $l_1^{-1}$ is the inverse of $l_1$.
    \item [(4).] $\gamma_{\mathrm{sext}}$ is homotopic to $\gamma_C \cdot \gamma_{\mathrm{flex}}$ since $(\phi(-I) , -I) = (\phi(-I) , I) \cdot (0, -I)$. 
    
\end{enumerate}

Next we show that for any triple of loops $\gamma_C, \gamma_{\mathrm{flex}},  \gamma_{\mathrm{sext}}$ in $E$ that satisfy the four conditions above, then the homotopy class of $\gamma_C$ is an element in $\pi_1(C_F)$ not divisible by 2. Let $\gamma: S^1 \rightarrow \tilde{\mathcal{X}}_{\mathrm{flex}}$ be the image of $\gamma_{\mathrm{sext}}$ under the covering map $f:\tilde{\mathcal{X}}_{\mathrm{sext}} \rightarrow \tilde{\mathcal{X}}_{\mathrm{flex}}$, where the existence of such a covering map is guaranteed by Proposition \ref{covering_map}. Then $\gamma_{\mathrm{sext}} \subset \gamma^* p^*_{\mathrm{flex}} E$. For convenience, we will denote $\gamma^* p_{\mathrm{flex}}^* E$ by $\gamma^*E$ in the remainder of the proof, just as we have done in Lemma \ref{mapping_torus}. We also denote a point in $\gamma^*E$ by its image under $\Psi$, where $\Psi$ is a homeomorphism from  $\gamma^*E$ to the mapping torus
$$T_f = ([0,1] \times T^2) /\{ (0, v) \sim (1, -v) \, | \, v \in T^2 \}$$ given by (c), Lemma \ref{mapping_torus}. 

The base point of $\gamma^* E$ is $(0,(0,0))$, and $ \gamma(t) = (t,(0,0))$. By  (c) in Lemma \ref{mapping_torus}, a loop where every point is a sextatic point can only be $\bar{\gamma}_{v} : S^1 \rightarrow \gamma_{\mathrm{flex}}^*E, \, t \mapsto (t, v)$, where $v$ is a 2-torsion in $T^2$. Therefore, $\gamma_{\mathrm{sext}}$ is homotopic to $l_1 \cdot \bar{\gamma}_v \cdot l_1^{-1}$ for a 2-torsion $v \in T^2$ and a path $l_1$ inside $C_F$ that goes from $(0,(0,0))$ to $v$.

Let $l_1(t) = (0, l(t))$. Define $l_2(t) = (0, -l(1-t)) $, then $l_2(0) = v$ and $l_2(1) =(0,0)$. Let 
$$ H(s,t) =  \begin{cases} 
(0, -l(1-2t)) & 0 \leq t \leq \frac{1}{2}s, \\
(2t, -l(1-2s)) & \frac{1}{2}s \leq t \leq \frac{1}{2}(s+1), \\
(0, l(2-2t)) & \frac{1}{2}(s+1) \leq t \leq 1, \\
\end{cases}
$$
where $s \in [0,1]$ and $t \in S^1 \cong ( [0,1] / 0 \sim 1 )$. Then $H(0,t) \simeq \bar{\gamma}_v  \cdot l_1^{-1}$ and $H(1,t) \simeq l_2 \cdot \gamma$. Therefore, $\gamma_{\mathrm{sext}} \simeq l_1 \cdot \bar{\gamma}_v \cdot l_1^{-1}$ is homotopic $l_1 \cdot l_2 \cdot \gamma$.

\begin{figure}[htp]
    \centering
    \includegraphics[width=10cm]{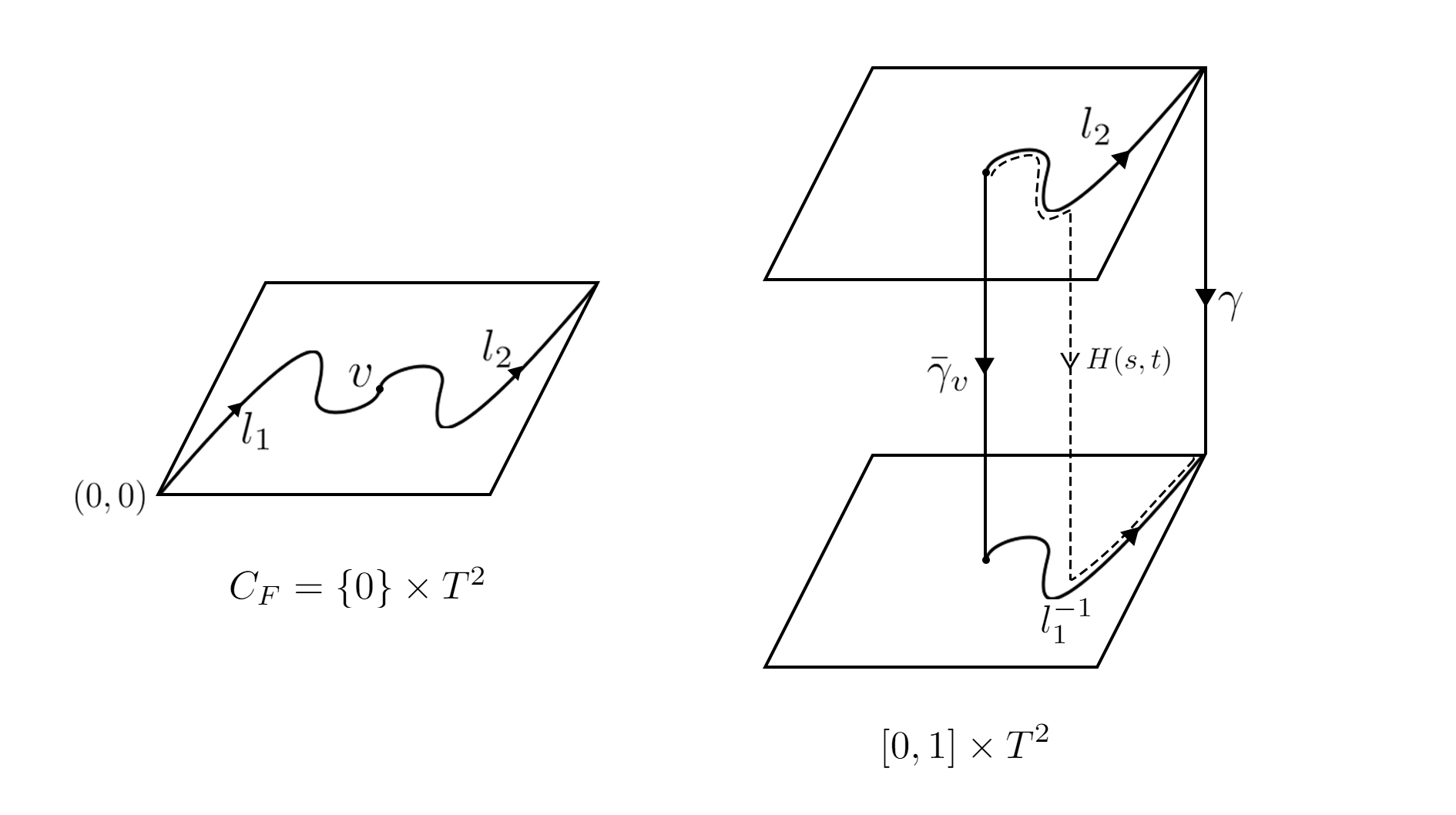}
    \captionsetup{font={footnotesize}}
    \caption{The paths and the homotopy in the proof. $C_F$ is represented by a parallelogram, and $T_f$ is represented by a parallelepiped. (Both of them should have opposite edges glued by some rules, but we omit the gluing rules for the conciseness of the figure.)}
    \label{fig:my_label}
\end{figure}

Furthermore, $l_1 \cdot l_2$ lies in $C_F$ and $\gamma$ is a loop where every point is an inflection point. Since $\Lambda \cong \pi_1(C_F) \rtimes \Gamma_{\mathrm{flex}}$, there is only one way up to homotopy to write any loop in $E$ into a concatenation of a loop in $C_F$ and a loop where every point is an inflection point. Hence, $\gamma_C \simeq l_1 \cdot l_2.$

Lift $l_1 \cdot l_2$ from $T^2$ to its universal cover $\mathbb{R}^2$. Denote the lift of $l_1$ that begins at $(0,0)$ as $\tilde{l}_1$, and assume that it ends at $\tilde{v}$, then $2\tilde{v} \in \mathbb{Z}^2$ and $\tilde{v} \notin \mathbb{Z}^2$. Denote the lift of $l_2$ that begins at $\tilde{v}$ as $\tilde{l}_2$. Recall that $l_1^{-1}(t) = (0, l(1-t))$ and  $l_2(t) = (0, -l(1-t))$. because $\tilde{l}_1^{-1}$ goes from $\tilde{v}$ to $(0,0)$, we know that $\tilde{l}_2$ goes from $\tilde{v}$ to $2\tilde{v}$. Because $\tilde{v} \notin \mathbb{Z}^2$, the end point of $\tilde{l}_1 \cdot \tilde{l}_2$ is an element in $\mathbb{Z}^2$ not divisible by 2, and the homotopy class of $l_1 \cdot l_2$ in $\pi_1(C_F)$ is not divisible by 2.

On the other hand, if $\phi$ is a coboundary, then $\phi(-I) = x - (-I) \cdot x = 2x$ for some $x \in \mathbb{Z}^2$, so $(\phi(-I), I) = (x, I) \cdot (x, I)$ and the homotopy class of $\gamma_C$ must be divisible by 2. Hence, the cocycle $\phi$ is not a coboundary and $[s_{\mathrm{sext}}]$ is the nontrivial element in $H^1(\Gamma_\mathrm{sext}; \mathbb{Z}^2)$.

\end{proof}

\subsection{Proof of Proposition \ref{homotopic_sectionmap}}

We now finish the proof of Proposition 3.1.

\begin{proof}[Proof of Proposition \ref{homotopic_sectionmap}]

(a). By Proposition \ref{covering_map}, we already know that there exists a covering map $f:\tilde{\mathcal{X}}_n \rightarrow \tilde{\mathcal{X}}_\mathrm{flex}$  such that $p_\mathrm{flex} \circ f = p$. 

If $[s_n]$ is trivial, consider two section maps $s_n$ and $s_\mathrm{flex} \circ f$. We claim that $ [s_\mathrm{flex} \circ f ] =0 \in H^1(\Gamma_{n};\mathbb{Z}^2) $. Indeed, $[ s_\mathrm{flex} \circ f ]$ can be seen as the image of $s_\mathrm{flex} \in H^1(\Gamma_\mathrm{flex};\mathbb{Z}^2)$ under $f^*:  H^1(\Gamma_\mathrm{flex};\mathbb{Z}^2) \rightarrow  H^1(\Gamma_n;\mathbb{Z}^2) $, which is zero since $[s_\mathrm{flex}]=0.$ Since $s_n$ is also zero, the two section maps are homotopic.

\vspace{2mm}

(b). If $[s_n]$ is nontrivial, assume that $[s_n]$ is represented by $\phi(g) = \frac{1}{2} (g \cdot v - v)$ for some $v$. If $v \in (2\mathbb{Z})^2$ then $\phi_{\bar{v}}(g)$ is a coboundary, so $v \notin (2\mathbb{Z})^2$. 

Change the basis of $H_1(C_F; \mathbb{Z})$ so that 
$v = \begin{bmatrix}
1 \\ 0
\end{bmatrix}$ mod 2
under the new basis. Then for every $g \in \Gamma_n$, we have $(g-I) \cdot \begin{bmatrix}
1 \\ 0
\end{bmatrix}  \in (2\mathbb{Z})^2$, and if we write $g = \begin{bmatrix}
a & b \\
c & d
\end{bmatrix}$, then  $\begin{bmatrix}
a-1 \\ c
\end{bmatrix} \in (2\mathbb{Z})^2$. Therefore 
$$g = \begin{bmatrix}
1 & 0 \\
0 & 1 
\end{bmatrix}  \mathrm{or} \begin{bmatrix}
1 & 1 \\
0 & 1 
\end{bmatrix} \mathrm{mod} \ 2,$$ 
and by (b) in Proposition \ref{sext}, we have $\Gamma_n \subset \Gamma_\mathrm{sext}$. 

To prove that there is a covering map $f:\tilde{\mathcal{X}}_n \rightarrow \tilde{\mathcal{X}}_\mathrm{sext}$, we only need to prove that $\tilde{\Gamma}_n$ can be conjugated into $\tilde{\Gamma}_\mathrm{sext}$. Again, consider the following split exact sequences, which are similar to the split exact sequence in the proof of Proposition \ref{covering_map}: 

\[ 1 \rightarrow (\mathbb{Z}/3\mathbb{Z})^2 \rightarrow (\mathbb{Z}/3\mathbb{Z})^2 \rtimes \Gamma_n \rightarrow \Gamma_n \rightarrow 1 ,\]
\[ 1 \rightarrow (\mathbb{Z}/3\mathbb{Z})^2 \rightarrow (\mathbb{Z}/3\mathbb{Z})^2 \rtimes \Gamma_{\mathrm{sext}} \rightarrow \Gamma_{\mathrm{sext}} \rightarrow 1, \]
where $\tilde{\Gamma}_n$ is the image of a section of the first exact sequence, and $\Gamma_{\mathrm{sext}}$ is the image of a section of the second exact sequence.
Since  $H^1(\Gamma_n;(\mathbb{Z}/3\mathbb{Z})^2)=0$ for any subgroup $\Gamma_n \subset \mathrm{SL}_2(\mathbb{Z})$ that contains $-I$ (this is proved inside the proof of  Proposition \ref{covering_map}), both sections $$\Gamma_n \rightarrow (\mathbb{Z}/3\mathbb{Z})^2 \rtimes \Gamma_n , \ \  \Gamma_{\mathrm{sext}} \rightarrow (\mathbb{Z}/3\mathbb{Z})^2 \rtimes \Gamma_{\mathrm{sext}}$$ are conjugate to the standard section, and are therefore conjugate to each other when restricted to $\Gamma_n$.
So $\tilde{\Gamma}_n$ can be conjugated into $\tilde{\Gamma}_\mathrm{sext}$.

Finally, consider the two sections $s_n$ and $s_\mathrm{sext} \circ f$. We have that $ [s_\mathrm{sext} \circ f ]$ is the image of $[s_\mathrm{sext}] \in H^1(\Gamma_\mathrm{sext};\mathbb{Z}^2)$ under $f^*:  H^1(\Gamma_{\mathrm{sext}};\mathbb{Z}^2) \rightarrow  H^1(\Gamma_n;\mathbb{Z}^2) $, which is represented by  $\phi(g) = \frac{1}{2} (g \cdot \begin{bmatrix}
1 \\ 0
\end{bmatrix} - \begin{bmatrix}
1 \\ 0
\end{bmatrix})$. This is the same as $[s_\mathrm{sext}] \in H^1(\Gamma_n;\mathbb{Z}^2)$. Therefore, $s_n$ is homotopic to $s_\mathrm{sext} \circ f$.

\end{proof}

\section{Braids over loops in $\tilde{\mathcal{X}}_\mathrm{flex}$ and $\tilde{\mathcal{X}}_\mathrm{sext}$}

In this section, we will prove the following proposition: 

\begin{proposition} \label{m=1}
Any connected virtual section $\tilde{\mathcal{X}}_n$ such that $-I \in \Gamma_n$ is homotopic to $ \tilde{\mathcal{X}}_\mathrm{flex} $ or $ \tilde{\mathcal{X}}_\mathrm{sext} $.
\end{proposition}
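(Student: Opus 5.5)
By Proposition \ref{homotopic_sectionmap}, a connected virtual section $\tilde{\mathcal{X}}_n$ with $-I\in\Gamma_n$ admits a covering map $f:\tilde{\mathcal{X}}_n\to\tilde{\mathcal{X}}_\star$ with $\star\in\{\mathrm{flex},\mathrm{sext}\}$, such that $p_n=p_\star\circ f$ and $s_n\simeq s_\star\circ f$. If $f$ has degree $d$, then $n=9d$ or $n=27d$. Since homotopic virtual sections are related by an isomorphism of covers, it suffices to show $d=1$. The plan is to argue by contradiction: assume $d\ge 2$ and derive a contradiction from the requirement that $s_n$ is injective while being homotopic to the non-injective map $s_\star\circ f$, which sends each fiber of $f$ onto a single point.

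Step 1 is to localize near a single point. Because $s_n\simeq s_\star\circ f$, we may choose the homotopy so that, over a neighbourhood of each $(F,p)\in\tilde{\mathcal{X}}_\star$, the $d$ points $s_n(f^{-1}(F,p))$ lie in a small disk around $p$ in $C_F$ (or around the sextatic point $q$). This should be possible because $s_\star$ is injective, so its image has a tubular neighbourhood in $E$. That neighbourhood is modelled on the tangent line bundle $\xi_\star$ over $\tilde{\mathcal{X}}_\star$. The injective section $s_n$ then becomes a section of the bundle of unordered $d$-point configurations in the fibers of $\xi_\star$. For each loop $\gamma$ in $\tilde{\mathcal{X}}_\star$ and each trivialization $\mathcal{T}$ of $\gamma^*\xi_\star$, this yields a braid $\beta_\gamma\in Br_d$, well defined up to conjugation and up to powers of the full twist $\Delta^2$ coming from the choice of $\mathcal{T}$. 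Its image in $S_d$ is the monodromy of the cover $f$ along $\gamma$.

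Step 2 is to use relations in $\pi_1(\tilde{\mathcal{X}}_\star)/Z(K)$, which is $\mathrm{SL}_2(\mathbb{Z})$ or $\Gamma_\mathrm{sext}$, to obtain an equation in $Br_d$. A natural choice is the loop $\gamma$ with $\rho(\gamma)=-I$. By Lemma \ref{mapping_torus}, $-I$ acts on the tangent space at the flex or sextatic point by $-1$, so $\xi_\star$ restricted to $\gamma$ has nontrivial rotation (a half turn). One also uses the elements of order $4$ and $6$ in $\mathrm{SL}_2(\mathbb{Z})$, which act on $T_pC_F$ by rotations through $\pi/2$ and $\pi/3$ at CM curves (the $j=1728$ and $j=0$ curves). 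Since $S^2=T^3=-I$ with $S^4=T^6=I$, the corresponding braids must satisfy equations of the form $\beta^4=\Delta^{2k+1}$-type relations, up to a fractional twist. Here the exponents are forced by the tangent rotation of the CM automorphism, which is a linearization of a finite-order fiber automorphism fixing $p$. The braids of finite order modulo the centre are classified: they are conjugate to roots of $\Delta^2$, that is, rotations of $d$ or $d-1$ points around a centre. The main obstacle is computing this rotation data precisely and showing that no $d\ge2$ is compatible with it. Concretely, an orbit of $d$ points near $p$ that is invariant under an order-$4$ (respectively order-$6$) rotation acting freely except at the centre forces $d\equiv 0,1\pmod 4$ and $d\equiv0,1\pmod 6$, where the $+1$ case corresponds to a point sitting at the centre. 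I would then show that the centre point, if present, must itself define a section, which contradicts connectedness of $\tilde{\mathcal{X}}_n$ when $d\ge 2$. In the remaining case, compatibility with $-I$ acting as the half turn, together with the $\Delta$-exponent forced by the relation $S^2=T^3$, should lead to a parity contradiction.

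Step 3 is to conclude that $d=1$, so that $f$ is an isomorphism of covers and the homotopy $s_n\simeq s_\star\circ f$ shows that $\tilde{\mathcal{X}}_n$ is homotopic to $\tilde{\mathcal{X}}_\mathrm{flex}$ or $\tilde{\mathcal{X}}_\mathrm{sext}$. For the sextatic case, $\Gamma_\mathrm{sext}$ does not contain the order-$4$ or order-$6$ elements in the same form, so I would instead use suitable conjugates, or products such as $ST$-type elements lying in $\Gamma_\mathrm{sext}$ together with $-I$, and repeat the argument. I expect this adaptation, along with pinning down the exact twist exponents, to be the hardest part.
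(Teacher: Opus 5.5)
Your reduction to showing that $f$ has degree $d=1$ matches the paper, and so does your idea of encoding $s_n$ as $d$ points in the fibers of $\xi_\star$ and reading off braids along the loop $\gamma$ with $\rho(\gamma)=-I$. But Step 2 never produces a contradiction, because you are missing the decisive observation: the braid along this $\gamma$ is \emph{pure}. Since $-I\in\Gamma_n$ and $-I$ is central, the monodromy of $f$ along $\gamma$ sends each coset $\Gamma_n g$ to $\Gamma_n g(-I)=\Gamma_n g$. So every lift of $\gamma$ to $\tilde{\mathcal{X}}_n$ closes up.

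Once you also have $\beta^2=\Delta^2$, nothing more is needed. A square root of $\Delta^2$ is conjugate to $\Delta$ (uniqueness of roots up to conjugacy, as the paper uses). Alternatively, by the periodic-braid classification you quote, it is conjugate to $\delta^{d/2}$ or $\varepsilon^{(d-1)/2}$, where $\delta=\sigma_1\cdots\sigma_{d-1}$ and $\varepsilon=\delta\sigma_1$. Each of these induces a nontrivial permutation for $d\ge 2$, contradicting purity.

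Your detour through order-$4$ and order-$6$ elements at CM curves does not close:
\begin{itemize}
\item The congruences $d\equiv 0,1 \pmod 4$ and $\pmod 6$ allow, e.g., $d=12,13$.
\item A point fixed by the permutation of a single loop need not be fixed by the whole monodromy of $f$, so the ``centre point'' does not give a sub-section.
\item The ``parity contradiction'' is never spelled out.
\item $\Gamma_{\mathrm{sext}}$, which is $\Gamma_0(2)$ in the basis of Proposition~\ref{sext}, has no elements of order $3$ or $6$, so the $S^2=T^3$ input is unavailable in the sextatic case. The $-I$ argument works uniformly in both cases.
\item The order-$4$ loop adds nothing, since its square is the $-I$ loop.
\end{itemize}

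Two further steps need arguments rather than assertions.

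First, the localization in Step 1 is unjustified. The homotopy $s_n\simeq s_\star\circ f$ necessarily passes through non-injective maps, so it gives no injective deformation of $s_n$ into a tubular neighbourhood of $s_\star$. What works is to take the homotopy's paths from $p$ to the points $x_i$, straighten them to flat geodesics, and use their initial velocities $V_i\in T_pC_F$. These are distinct because $\exp_p(V_i)=x_i$, and they give the configuration section of $\xi_\star$ directly.

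Second, a ``half turn'' of $\gamma^*\xi_\star$ only has meaning relative to a framing, and its parity is exactly the point. An even twist would give $\beta^2=1$, hence $\beta=1$ and no contradiction. You list this as the main obstacle without a method. The paper handles it as follows:
\begin{itemize}
\item Take a null-homotopy $H$ of $\gamma\cdot\gamma$ and trivialize the torus bundle over $H$.
\item Let $\sigma(s,t)$ be the geodesic velocity, at the marked point, of a continuously varying based loop in the fiber tori. This gives a trivialization $\mathcal{T}_0$ with $b(\gamma\cdot\gamma,\mathcal{T}_0)=1$.
\item Because $\rho(\gamma)=-I$ reverses the class of the transported loop, $\sigma(0,t+\tfrac12)=-\sigma(0,t)$.
\item Rotating by $e^{2\pi i t}$ then gives a trivialization $\mathcal{T}'$ of $\gamma^*\xi_\star$ with $\mathcal{T}'\cdot\mathcal{T}'\simeq e^{2\pi i t}\mathcal{T}_0$, hence $b(\gamma,\mathcal{T}')^2=\Delta^2$.
\end{itemize}
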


The method to prove this proposition is to use the homotopy we have gained in Proposition \ref{homotopic_sectionmap} to build braids in Artin's braid group $Br_m$ for certain loops in $ \tilde{\mathcal{X}}_\mathrm{flex} $ or $ \tilde{\mathcal{X}}_\mathrm{sext} $. 

In 4.1, we introduce a way to construct braids through the homotopy relation. In 4.2, we prove Proposition \ref{m=1}, and also list the ingredients we need to complete the proof of Proposition \ref{m=1}. In 4.3, we construct a braid over any nullhomotopic loop, while in 4.4, we construct another braid over a certain loop $\gamma$ such that $\gamma \cdot \gamma$ is nullhomotopic and $\gamma$ is not. These two subsections will produce the ingredients to complete the proof of Proposition \ref{m=1}.
\vspace{3mm}

\subsection{Constructing braids over loops using homotopy relation}

Let $\xi_\mathrm{flex}$ (resp. $\xi_\mathrm{sext}$) be the vector bundle over $\tilde{\mathcal{X}}_\mathrm{flex}$ (resp. $\tilde{\mathcal{X}}_\mathrm{sext}$), whose fiber over a point $(F,p)$ (resp. $(F,q)$) is the tangent space $T_pC_F$ (resp. $T_qC_F$). They are complex line bundles since $C_F$ is a Riemann surface. Any complex line bundle over a loop is trivial, so we can consider global trivializations of $\gamma^* \xi_{\mathrm{flex}}$ or $\gamma^* \xi_{\mathrm{sext}}$ for any loop $\gamma$.

Recall that in Proposition \ref{homotopic_sectionmap}, a virtual section $\tilde{\mathcal{X}}_n$ such that $-I \in \Gamma_n$ satisfies either the conditions in (a) or the conditions in (b). For the remainder of this paper, if a connected virtual section $( \tilde{\mathcal{X}}_n,p_n,s_n )$ satisfies conditions in (a) of Proposition \ref{homotopic_sectionmap}, then we say it is an \textit{inflection-cased} virtual section, and if it satisfies conditions in (b) of Proposition \ref{homotopic_sectionmap}, we say it is a \textit{sextatic-cased} virtual section.

We now claim the following lemma.

\begin{lemma} \label{braid}
Suppose $(\tilde{\mathcal{X}}_n, p_n, s_n)$ is an inflection-cased virtual section. 
Fix a base point $(F_0, p_0)$ in $\tilde{\mathcal{X}}_\mathrm{flex}$, and fix an isomorphism $h: T_{p_0}C_{F_0} \rightarrow \mathbb{C}$.

Given a loop $\gamma: S^1 \rightarrow \tilde{\mathcal{X}}_\mathrm{flex}$ with base point $(F_0, p_0)$, and a trivialization $\mathcal{T} : \gamma^* \xi_{\mathrm{flex}} \rightarrow \mathbb{C} \times S^1$ such that its restriction on the fiber at point 0 is $h: T_{p_0}C_{F_0} \rightarrow \mathbb{C}$. Then there is a braid $b(\gamma, \mathcal{T}) \in Br_m$, where $m = \frac{n}{9}$, satisfying the following conditions:

\begin{enumerate} \setlength{\itemindent}{1em}
    \item [(1).] If $(\gamma_0, \mathcal{T}_0)$ is based homotopic to  $(\gamma_1, \mathcal{T}_1)$, then $b(\gamma_0, \mathcal{T}_0) = b(\gamma_1, \mathcal{T}_1)$.
    
    \item [(2).] $b(\gamma_1 \cdot \gamma_2, \mathcal{T}_1 \cdot \mathcal{T}_2) = b(\gamma_1, \mathcal{T}_1) \cdot b(\gamma_2, \mathcal{T}_2)$. 
    
    \item [(3).] $b(\gamma, e^{2\pi i t} \mathcal{T} ) = \Delta^2 \cdot b(\gamma,  \mathcal{T} )$. Here the trivialization $e^{2\pi i t} \mathcal{T}$ is defined as $(e^{2\pi i t} \mathcal{T} ) (t, V) = (t, e^{2\pi i t} \cdot V)$, and $\Delta^2 = ( \sigma_1 \sigma_2 ... \sigma_{m-1})^m$, where $\sigma_i$ are the Artin generators of $Br_m$.
\end{enumerate}

\end{lemma}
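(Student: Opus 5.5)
The construction uses the homotopy from Proposition \ref{homotopic_sectionmap}(a) to push the points of $\tilde{\mathcal{X}}_n$ into small disks around the inflection points, and then reads off a braid from the tangent-plane picture. The plan has three steps.

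\textbf{Step 1: fix the local data.} Proposition \ref{homotopic_sectionmap}(a) gives a covering $f:\tilde{\mathcal{X}}_n\to\tilde{\mathcal{X}}_\mathrm{flex}$ of degree $m=n/9$ and a homotopy $s_t$ from $s_\mathrm{flex}\circ f$ to $s_n$. I will use this homotopy to replace $s_n$, within its injective homotopy class, by a section map $s_n'$ with the following property: for every $(F,p)\in\tilde{\mathcal{X}}_\mathrm{flex}$, the $m$ points $s_n'(f^{-1}(F,p))$ lie in a small disk $D_{(F,p)}\subset C_F$ around $p$, and these disks are pairwise disjoint.

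To arrange this, I would first choose a continuous family of exponential-type maps $\exp_{(F,p)}:T_pC_F\supset U\to C_F$, for instance from the flat metric on the torus normalized so that the nine flexes stay far apart. I would then rescale. Since the homotopy class of the section is determined by $H^1$, only the combinatorial data matter, so I can arrange that $s_n'$ is the image under $\exp$ of a map into configurations of $m$ distinct points in $T_pC_F$. (Injectivity near each flex can be forced by a shrinking argument, since $f$ is a covering.) The upshot is a continuous map
\[
\Phi:\tilde{\mathcal{X}}_\mathrm{flex}\to \mathrm{UConf}_m(\xi_\mathrm{flex}),
\]
that is, a section of the bundle of unordered configurations of $m$ points in the fibers of $\xi_\mathrm{flex}$.

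\textbf{Step 2: define the braid.} Given a loop $\gamma$ and a trivialization $\mathcal{T}$, pulling $\Phi$ back along $\gamma$ and applying $\mathcal{T}$ gives a loop in $\mathrm{UConf}_m(\mathbb{C})$. Its basepoint $h(\Phi(F_0,p_0))$ is fixed. I define $b(\gamma,\mathcal{T})\in\pi_1(\mathrm{UConf}_m(\mathbb{C}))=Br_m$ to be the class of this loop.

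\textbf{Step 3: check the three properties.}
\begin{itemize}
\item[(1)] A based homotopy of the pairs $(\gamma,\mathcal{T})$ produces a based homotopy of the resulting loops in $\mathrm{UConf}_m(\mathbb{C})$, so the braids agree.
\item[(2)] Concatenating loops and their trivializations concatenates the resulting loops in configuration space, which multiplies the braids.
\item[(3)] Replacing $\mathcal{T}$ by $e^{2\pi i t}\mathcal{T}$ rotates the configuration at time $t$ by angle $2\pi t$. The full twist $\Delta^2$ is central in $Br_m$ and is represented by the loop rotating a configuration once around. A homotopy that performs the rotation first and then runs the original loop identifies the new braid with $\Delta^2\cdot b(\gamma,\mathcal{T})$.
\end{itemize}

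The main obstacle is Step 1. I need to show that the injective section $s_n$ can be homotoped, through \emph{injective} section maps, into the tubular neighborhood of the flexes so that it factors through $\xi_\mathrm{flex}$. Proposition \ref{homotopic_sectionmap} only gives a homotopy that may pass through non-injective maps. My first attempt would be to note that the lemma only needs \emph{some} braid with properties (1)–(3). I would therefore try defining the braid directly from the homotopy $s_t$: track the $m$ points over $(F,p)$ along the homotopy and linearize them at $p$ at the final stage. The difficulty is that collisions during the homotopy would spoil the construction. If that fails, I would instead use the straight-line homotopy in the universal cover of each fiber, relative to the flex lift, and argue that the relevant preimages stay distinct after scaling toward $p$.
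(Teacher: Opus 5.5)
Your Steps 2 and 3 match the paper's argument. Given a continuous section $\Phi$ of the bundle of unordered $m$-point configurations in the fibers of $\xi_\mathrm{flex}$, lying over the cover $f$, the loop $t\mapsto \mathcal{T}(\Phi(\gamma(t)))$ in $\mathrm{UConf}_m(\mathbb{C})$ defines $b(\gamma,\mathcal{T})$. Properties (1)--(3) then follow as you describe; the paper uses the same ``rotate first, then run the loop'' homotopy for (3).

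\textbf{The gap is Step 1, which you leave open.} As formulated, it asks for far more than the lemma needs: a homotopy of $s_n$ \emph{through injective section maps} into small disjoint disks around the flexes. Nothing you cite provides this.
\begin{itemize}
\item The $H^1$ classification only controls homotopies through arbitrary lifts, not injective ones.
\item The ``shrinking'' idea does not obviously work. Scaling a configuration $V_1,\dots,V_m\in T_pC_F$ by $\lambda\in(0,1]$ and applying $\exp_p$ produces a collision whenever $\lambda(V_i-V_j)$ lands in the period lattice. This can happen for intermediate $\lambda$ even when it does not happen at $\lambda=1$.
\end{itemize}

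The missing idea is that no modification of $s_n$ is needed, because collisions during the homotopy are irrelevant.
\begin{itemize}
\item Let $s_t$ be the homotopy (through lifts of $p_n$) from $s_\mathrm{flex}\circ f$ to $s_n$. For $X\in f^{-1}(F,p)$, the path $t\mapsto s_t(X)$ runs in $C_F$ from $p$ to $x=s_n(X)$.
\item For the flat metric, $\exp_p:T_pC_F\to C_F$ is the universal covering of $C_F$, not merely a local chart. So this path lifts uniquely to a path in $T_pC_F$ starting at $0$; call its endpoint $V(X)$. Equivalently, $V(X)$ is the initial velocity of the unique geodesic in the path's class rel endpoints, which is what the paper uses.
\item Then $\exp_p(V(X))=x$. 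Injectivity of $s_n$ at time $1$ alone therefore forces the $m$ vectors $V(X)$, $X\in f^{-1}(F,p)$, to be distinct, whatever happens at intermediate times.
\item Continuity in $(F,p)$ comes from homotopy lifting. Apply it to the fiberwise covering $\xi_\mathrm{flex}\to p_\mathrm{flex}^*E$, $V\mapsto\exp V$, and the map $(X,t)\mapsto (f(X),s_t(X))$, starting from the zero vectors.
\end{itemize}
This produces $\Phi$ directly. It also shows that $s_n$ already factors as $\exp\circ\tilde{s}_n$ through $\xi_\mathrm{flex}$. Your second fallback (straight lines in the universal cover relative to the flex lift) is essentially this argument, but you should drop the scaling toward $p$ and the small disks.
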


Both condition (1) and (2) need some clarification. In the first condition, since $\mathcal{T}$ depends on $\gamma$, the homotopy of $\gamma$ and the homotopy of $\mathcal{T}$ cannot be treated separately. We will give our definition here:

\begin{definition} \label{trivialization_homotopy}
The pairs of loops and trivializations, $(\gamma_0, \mathcal{T}_0)$ and $(\gamma_1, \mathcal{T}_1)$, are \textit{based homotopic} if the following properties hold:
\begin{enumerate} \setlength{\itemindent}{1em}
    \item [(1).] There is a based homotopy $H: [0,1] \times S^1 \rightarrow \tilde{\mathcal{X}}_{\mathrm{flex}}$ such that $H(0,t) = \gamma_0(t)$ and $H(1,t) = \gamma_1(t)$;
    \item [(2).] There is a based homotopy $T : H^*\xi_{\mathrm{flex}} \rightarrow \mathbb{C} \times S^1$ such that 
    $$T|_{ \{0 \} \times S^1} = \mathcal{T}_0, \  T|_{ \{1 \} \times S^1} = \mathcal{T}_1,$$
    and $T|_{ \{s \} \times S^1}$ is a trivialization of $\gamma_s^* \xi_{\mathrm{flex}}$ for every $s \in [0,1]$, where $\gamma_s(t)=H(s,t)$. It is equivalent to saying $T$ maps the fiber at point $(s,t)$ to $\mathbb{C} \times \{t\}$.
    \item [(3).] For every $s \in [0,1]$, the trivialization $T|_{ \{s \} \times S^1}$ restricted to the fiber at base point $(s,0)$ is fixed, in other words, $T|_{(s,0)} = h, \forall s \in [0,1]$ for some isomorphism $h: T_{p_0}C_{F_0} \rightarrow \mathbb{C}$.
\end{enumerate}

\end{definition}

In the second condition of Lemma \ref{braid}, $\gamma_1 \cdot \gamma_2$ is the concatenation of two loops and $b(\gamma_1, \mathcal{T}_1) \cdot b(\gamma_2, \mathcal{T}_2)$ is the product of two braids. We define the notation $\mathcal{T}_1 \cdot \mathcal{T}_2$:

\begin{definition}
If we explicitly write
$$ \gamma_1 \cdot \gamma_2: S^1  \rightarrow \tilde{\mathcal{X}}_\mathrm{flex},$$
$$t \mapsto \begin{cases}
\gamma_1(2t), & t \in [0, \frac{1}{2}], \\
\gamma_2(2t-1), & t \in [\frac{1}{2}, 1],
\end{cases}$$
then under this definition, $(\gamma_1 \cdot \gamma_2)^* (\xi_\mathrm{flex})$ is a vector bundle over $S^1$ whose fiber at point $t$ is the fiber of $\gamma_1^* (\xi_\mathrm{flex})$ at point $2t$ if $t \in [0, \frac{1}{2}]$, and the fiber of $\gamma_2^* (\xi_\mathrm{flex})$ at point $2t-1$ if $t \in [\frac{1}{2}, 1]$. Define

$$\mathcal{T}_1 \cdot \mathcal{T}_2: (\gamma_1 \cdot \gamma_2)^* (\xi_\mathrm{flex}) \rightarrow \mathbb{C} \times S^1, $$
$$ (t, V) \mapsto \begin{cases}
    \mathcal{T}_1(2t,V), & t \in [0, \frac{1}{2}],  \\
 \mathcal{T}_2(2t-1,V), & t \in [\frac{1}{2}, 1]. 
\end{cases} $$

\end{definition}

We are now ready to prove Lemma \ref{braid}.

\begin{proof}[Proof of Lemma \ref{braid}]

\vspace{2mm}

The proof of Lemma \ref{braid} will have the following structure: first we construct a loop $\Omega(\gamma, \mathcal{T})$ inside $\mathrm{UConf}_m(\mathbb{C})$ for each loop $\gamma$ and trivialization $\mathcal{T}$, then we show that the homotopy class of $\Omega(\gamma, \mathcal{T})$, which is an element in $\pi_1 (\mathrm{UConf}_m(\mathbb{C}) ) \cong Br_m $, satisfies the conditions stated in Lemma \ref{braid}.

\vspace{2mm}

We construct the loop $\Omega(\gamma, \mathcal{T})$ by the following steps: 

Step 1. For each point $(F,p) \in \tilde{\mathcal{X}}_\mathrm{flex}$, the preimage $f^{-1}(F,p)$ are $m$ distinct points, denoted as $X_1, \dots, X_m$, where $m=\frac{n}{9}$.
Since $s_n$ is injective, $s_n$ maps these $m$ points to distinct points in $E$, denoted as $(F,x_1), \dots, (F,x_m)$ respectively, where $x_i \in C_F$ for $i=1, \dots, m$. We have $(s_\mathrm{flex} \circ f)(X_i) = (F,p)$, and $s_n(X_i) =(F, x_i)$. Let $s_t$ be the homotopy such that $s_0 = s_\mathrm{flex} \circ f$ and $s_1 = s_n$. Then $\gamma_i(t) = s_t(F,x_i)$ defines a path from $p$ to $x_i$ inside $C_F$.

Step 2. Now we equip $C_F$ with its unique flat Riemannian metric compatible with its complex structure. Then every path $\gamma_i$ in $C_F$ is homotopic to a unique geodesic segment $\theta_i(\tau)$ where $\tau \in [0,1] $. Take 
$$V_i = \frac{\mathrm{d}}{\mathrm{d}\tau} \theta_i(\tau) |_{\tau=0}, $$
then $V_i$ are distinct for $i=1, \dots, n$. Therefore we can define a map $\tilde{s}_n : \tilde{\mathcal{X}}_n \rightarrow \xi_\mathrm{flex}$ by $\tilde{s}_n(X_i)= (F, V_i)$. 

Step 3. $\tilde{s}_n(\tilde{\mathcal{X}}_n)$ is a subset of $\xi_\mathrm{flex}$ which intersects every fiber $T_pC_F$ with $m$ distinct points. Therefore for any loop $\gamma : S^1 \rightarrow \tilde{\mathcal{X}}_\mathrm{flex}$, the pullback bundle $\gamma^* \xi_\mathrm{flex}$ contains a subset $ \gamma^* \tilde{s}_n(\tilde{\mathcal{X}}_n)$, which intersects every fiber with $m$ distinct points.

Step 4. Any complex line bundle over a circle is trivial. Therefore for any loop $\gamma : S^1 \rightarrow \tilde{\mathcal{X}}_\mathrm{flex}$, the pullback bundle $\gamma^* \xi_\mathrm{flex}$ is trivial. Given a trivialization $\mathcal{T}: \gamma^* \xi_\mathrm{flex} \rightarrow  \mathbb{C} \times S^1 $, its image $\mathcal{T}( \gamma^* \tilde{s}_n(\tilde{\mathcal{X}}_n) )$ is a subset of $\mathbb{C} \times S^1$, which intersects $\mathbb{C} \times \{t\}$ at $m$ points for any $t \in S^1$. 
Define $\Omega(\gamma, \mathcal{T})$ to be
$$S^1 \rightarrow \mathrm{UConf}_m(\mathbb{C}), \ t \mapsto \mathcal{T} ( \gamma^* \tilde{s}_n(\tilde{\mathcal{X}}_n) )  \cap (\mathbb{C} \times \{t\} ).$$ 

Notice that since
$$\Omega(\gamma, \mathcal{T})(0) = \mathcal{T} ( \gamma^* \tilde{s}_n(\tilde{\mathcal{X}}_n) )  \cap (\mathbb{C} \times \{0\} ) = \mathcal{T}(  \tilde{s}_n(\tilde{\mathcal{X}}_n) \cap T_{p_0}C_{F_0} ),$$
and $\mathcal{T}$ restricted to the fiber $T_{p_0}C_{F_0}$ is a fixed isomorphism $h: T_{p_0}C_{F_0} \rightarrow \mathbb{C}$, we know that $\Omega(\gamma, \mathcal{T})(0)$ is a point in $\mathrm{UConf}_m(\mathbb{C})$ that does not depend on $\gamma$ or $\mathcal{T}$. We call this point $A$ for the rest of the proof. Let $b(\gamma,\mathcal{T})$ be the homotopy class of $\Omega(\gamma,\mathcal{T})$ in $\pi_1(\mathrm{UConf}_m(\mathbb{C}), A) \cong Br_m$.

\vspace{1mm}

Next, we show that $b(\gamma,\mathcal{T})$ satisfies (1) in Lemma \ref{braid}.
Suppose $(H,T)$ is a based homotopy between the pair of loops and trivializations as in  Definition 4.1, then each $T|_{\{s\} \times S^1 }$ is a trivialization of $\gamma_s^*\xi_{\mathrm{flex}}$, where $\gamma_s(t)=H(s,t)$. Take 
$$ H' : [0,1] \times S^1 \rightarrow \mathrm{UConf}_m(\mathbb{C}),$$
$$ (s,t) \mapsto T|_{\{s\} \times S^1 } \, ( \gamma_s^* \tilde{s}_n(\tilde{\mathcal{X}}_n) )  \cap ( \mathbb{C} \times \{t\} ). $$

Then we have
$$H'(0,t)=  \mathcal{T}_0( \gamma_0^* \tilde{s}_n(\tilde{\mathcal{X}}_n) ) \cap ( \mathbb{C} \times \{t\} ) = \Omega(\gamma_0, \mathcal{T}_0)(t), $$
$$H'(1,t)=  \mathcal{T}_1( \gamma_1^* \tilde{s}_n(\tilde{\mathcal{X}}_n) ) \cap ( \mathbb{C} \times \{t\} ) = \Omega(\gamma_1, \mathcal{T}_1)(t). $$ 

So $\Omega(\gamma_0, \mathcal{T}_0)$ and $\Omega(\gamma_1, \mathcal{T}_1)$ are homotopic and $b(\gamma_0, \mathcal{T}_0) = b(\gamma_1, \mathcal{T}_1).$

\vspace{2mm}
Then, we show that $b(\gamma,\mathcal{T})$ satisfies (2) in Lemma \ref{braid}. By checking the definition of $\Omega(\gamma, \mathcal{T})$ in Step 4, and also Definition 4.2, one could find that 

$$ \Omega(\gamma_1 \cdot \gamma_2, \mathcal{T}_1 \cdot \mathcal{T}_2)(t) =  \begin{cases}
\Omega(\gamma_1, \mathcal{T}_1 )(2t) & t \in [0, \frac{1}{2}], \\
\Omega(\gamma_2, \mathcal{T}_2 )(2t-1) & t \in [\frac{1}{2},1].
\end{cases}$$

So $\Omega(\gamma_1 \cdot \gamma_2, \mathcal{T}_1 \cdot \mathcal{T}_2) = \Omega(\gamma_1, \mathcal{T}_1) \cdot \Omega(\gamma_2, \mathcal{T}_2)$, and $b(\gamma_1 \cdot \gamma_2, \mathcal{T}_1 \cdot \mathcal{T}_2) = b(\gamma_1, \mathcal{T}_1) \cdot b(\gamma_2, \mathcal{T}_2)$.

Finally, we show that $b(\gamma,\mathcal{T})$ satisfies (3) in Lemma \ref{braid}. Indeed, we have $\Omega(\gamma, e^{2 \pi it} \mathcal{T})(t) = e^{2 \pi i t} \cdot \Omega(\gamma,  \mathcal{T})$, where $e^{2 \pi i t} \cdot X = \{ e^{2 \pi i t} z  \, | z \in X \}$ for any $X \in \mathrm{UConf}_m(\mathbb{C})$. Consider the following homotopy 

$$H : [0,1] \times S^1 \rightarrow \mathrm{UConf}_m(\mathbb{C}),$$
$$ (s,t) \mapsto e^{2\pi i \min\{ (1+s)t,1 \}} \cdot \Omega(\gamma,  \mathcal{T}) ( \max \{ 0, (1+s)t - s \} ),$$
where $t$ are treated as in $[0,1)$ in this definition. Then the loop $H_0$ is $ \Omega(\gamma, e^{2 \pi it} \mathcal{T})$ and the loop $H_1 $ is $ \omega \cdot \Omega(\gamma,  \mathcal{T})$, where $\omega: S^1 \rightarrow \mathrm{UConf}_m(\mathbb{C})$ is a loop defined as $\omega(t) = e^{2\pi i t} \cdot A$. It is known that the homotopy class of $\omega$ is $\Delta^2 \in Br_m$. Therefore, $b(\gamma, e^{2\pi i t} \mathcal{T} ) = \Delta^2 \cdot b(\gamma,  \mathcal{T})$.

\end{proof}

For sextatic-cased virtual sections, we have a lemma similar to Lemma \ref{braid}:

\begin{lemma} \label{braid, only sextatic}
Suppose $(\tilde{\mathcal{X}}_n, p_n, s_n)$ is a sextatic-cased virtual section. 
Fix a base point $(F_0, q_0)$ in $\tilde{\mathcal{X}}_\mathrm{sext}$, and fix an isomorphism $h: T_{q_0}C_{F_0} \rightarrow \mathbb{C}$.

Given a loop $\gamma: S^1 \rightarrow \tilde{\mathcal{X}}_\mathrm{sext}$ with base point $(F_0, q_0)$, and a trivialization $\mathcal{T} : \gamma^* \xi_{\mathrm{sext}} \rightarrow \mathbb{C} \times S^1$ such that its restriction on the fiber at point 0 is $h: T_{q_0}C_{F_0} \rightarrow \mathbb{C}$. Then there is a braid $b(\gamma, \mathcal{T}) \in Br_m$, where $m = \frac{n}{27}$, satisfying the following conditions:

\begin{enumerate} \setlength{\itemindent}{1em}
    \item [(1).] If $(\gamma_0, \mathcal{T}_0)$ is based homotopic to  $(\gamma_1, \mathcal{T}_1)$, then $b(\gamma_0, \mathcal{T}_0) = b(\gamma_1, \mathcal{T}_1)$.
    
    \item [(2).] $b(\gamma_1 \cdot \gamma_2, \mathcal{T}_1 \cdot \mathcal{T}_2) = b(\gamma_1, \mathcal{T}_1) \cdot b(\gamma_2, \mathcal{T}_2)$. 
    
    \item [(3).] $b(\gamma, e^{2\pi i t} \mathcal{T} ) = \Delta^2 \cdot b(\gamma,  \mathcal{T} )$. 
\end{enumerate}

\end{lemma}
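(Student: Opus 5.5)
The plan is to repeat the construction in the proof of Lemma \ref{braid} almost verbatim, with $\tilde{\mathcal{X}}_\mathrm{sext}$, $\xi_\mathrm{sext}$ and $s_\mathrm{sext}$ in place of $\tilde{\mathcal{X}}_\mathrm{flex}$, $\xi_\mathrm{flex}$ and $s_\mathrm{flex}$. Since the virtual section is sextatic-cased, (b) of Proposition \ref{homotopic_sectionmap} gives a covering map $f:\tilde{\mathcal{X}}_n \rightarrow \tilde{\mathcal{X}}_\mathrm{sext}$ with $p_\mathrm{sext}\circ f = p_n$, and a homotopy $s_t$ with $s_0 = s_\mathrm{sext}\circ f$ and $s_1 = s_n$. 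Since $\tilde{\mathcal{X}}_\mathrm{sext}$ has degree $27$ over $\mathcal{X}$, the covering $f$ has degree $m = n/27$. So every fiber $f^{-1}(F,q)$ consists of $m$ points $X_1,\dots,X_m$.

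The steps are as follows.
\begin{enumerate}
\item Fix a point $(F,q)\in\tilde{\mathcal{X}}_\mathrm{sext}$ and its preimages $X_1,\dots,X_m$. The points $s_n(X_i)=(F,x_i)$ are distinct, by injectivity of $s_n$. The track $t\mapsto s_t(X_i)$ is a path in $C_F$ from $q$ to $x_i$.
\item Give $C_F$ its flat metric compatible with the complex structure, and straighten each path to the unique geodesic $\theta_i$ in its homotopy class rel endpoints. Let $V_i = \theta_i'(0)\in T_qC_F$.
\item Define $\tilde{s}_n:\tilde{\mathcal{X}}_n\rightarrow \xi_\mathrm{sext}$ by $X_i\mapsto (F,q),V_i$. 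For a loop $\gamma$ and trivialization $\mathcal{T}$, set $\Omega(\gamma,\mathcal{T})(t)=\mathcal{T}(\gamma^*\tilde{s}_n(\tilde{\mathcal{X}}_n))\cap(\mathbb{C}\times\{t\})$, a loop in $\mathrm{UConf}_m(\mathbb{C})$.
\item Take $b(\gamma,\mathcal{T})$ to be its class in $\pi_1(\mathrm{UConf}_m(\mathbb{C}),A)\cong Br_m$. The base point $A = h(\tilde{s}_n(\tilde{\mathcal{X}}_n)\cap T_{q_0}C_{F_0})$ does not depend on $\gamma$ or $\mathcal{T}$, because the trivialization is required to restrict to $h$ at $t=0$.
\end{enumerate}

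The three properties are checked exactly as before. For (1), a based homotopy $(H,T)$ as in Definition \ref{trivialization_homotopy}, with $\xi_\mathrm{sext}$ replacing $\xi_\mathrm{flex}$, induces a homotopy of loops in $\mathrm{UConf}_m(\mathbb{C})$. Condition (3) of that definition makes it a based homotopy at $A$. For (2), $\Omega$ of a concatenation is literally the concatenation of the two $\Omega$'s, by the definition of $\mathcal{T}_1\cdot\mathcal{T}_2$. For (3), the same reparametrizing homotopy $(s,t)\mapsto e^{2\pi i\min\{(1+s)t,1\}}\,\Omega(\gamma,\mathcal{T})(\max\{0,(1+s)t-s\})$ shows that $\Omega(\gamma,e^{2\pi it}\mathcal{T})\simeq \omega\cdot\Omega(\gamma,\mathcal{T})$, where $\omega(t)=e^{2\pi it}A$ is the full twist $\Delta^2$.

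The main obstacle is the same as in Lemma \ref{braid}: verifying that the $V_i$ are pairwise distinct and nonzero, and that they vary continuously, so that $\tilde{s}_n$ is a continuous map and the intersections really are $m$ distinct points.
\begin{itemize}
\item \emph{Distinctness.} Lift to the universal cover $\mathbb{R}^2\to C_F$ based at a lift of $q$. The geodesic $\theta_i$ is the straight segment from the lift of $q$ to the endpoint of the lifted path $s_t(X_i)$. Its initial vector $V_i$ is the endpoint minus the basepoint, which determines the homotopy class of the path. If $V_i=V_j$, the endpoints coincide in $C_F$, so $x_i=x_j$, which contradicts injectivity of $s_n$.
\item \emph{Nonvanishing.} If $V_i=0$ then $x_i=q$. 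Since $s_\mathrm{sext}\circ f$ and $s_n$ are in general different maps, this could happen; but it is harmless, because $0$ is simply one of the $m$ distinct points of the configuration.
\item \emph{Continuity.} The lifted endpoint depends continuously on $X_i$, because the homotopy $s_t$ is continuous on all of $\tilde{\mathcal{X}}_n$. Hence $V_i$ varies continuously, using a locally continuous choice of flat structure. This uses that the flat metric on $C_F$ depends continuously on $F$, which holds since it is determined by the complex structure up to scale.
\end{itemize}
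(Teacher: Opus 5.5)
Your proposal is correct and is the paper's argument: the paper proves this lemma by rerunning the proof of Lemma \ref{braid} with $\tilde{\mathcal{X}}_\mathrm{flex}$, $s_\mathrm{flex}$, $\xi_\mathrm{flex}$ replaced by $\tilde{\mathcal{X}}_\mathrm{sext}$, $s_\mathrm{sext}$, $\xi_\mathrm{sext}$, and with $m=n/27$. Your extra checks supply details the paper only asserts: distinctness of the $V_i$ via the universal cover, the harmlessness of $V_i=0$, and continuity of $\tilde{s}_n$ (for the last, it suffices that the constant-speed geodesic on $[0,1]$ depends only on the canonical flat affine structure, not on the scale of the metric).
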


The proof is also similar to the proof of Lemma \ref{braid}, where we only need to replace all $\tilde{\mathcal{X}}_\mathrm{flex}$, $s_\mathrm{flex}$, $\xi_\mathrm{flex}$ with $\tilde{\mathcal{X}}_\mathrm{sext}$, $s_\mathrm{sext}$, $\xi_\mathrm{sext}$ respectively, and note that for the sextatic case $m = \frac{n}{27}$.

\subsection{Proof of Proposition \ref{m=1}}

Before our proof of Proposition \ref{m=1}, we need the following lemma:

\begin{lemma} \label{trivial_braid}
Let $( \tilde{\mathcal{X}}_n, p_n, s_n) $ be an inflection-cased virtual section.
Suppose $b(\gamma_0, \mathcal{T}_0) \in Br_m$ is the braid that satisfy the conditions in Proposition \ref{braid}.

\begin{enumerate} \setlength{\itemindent}{1em}
    \item [(a).] For any nullhomotopic loop $\gamma_0$ in $\tilde{\mathcal{X}}_{\mathrm{flex}}$ with base point $(F_0,p_0)$, there exists a trivialization $\mathcal{T}_0$ of $\gamma_0^* \xi_{\mathrm{flex}}$ such that $b(\gamma_0, \mathcal{T}_0)=1 \in Br_m$.
    \item [(b).] There exists a loop $\gamma$ in $\tilde{\mathcal{X}}_{\mathrm{flex}}$ with base point $(F_0,p_0)$ such that $\gamma \cdot \gamma$ is a nullhomotopic loop while $\gamma$ is not.  Its monodromy action on $H_1(C_F; \mathbb{Z})$ is $-I$.
    \item [(c).] There exists a trivialization $\mathcal{T}'$ of $\gamma^* \xi_{\mathrm{flex}}$, such that $ \mathcal{T}' \cdot \mathcal{T}' \simeq e^{2 \pi  it} \mathcal{T}_0 $ as trivializations of $(\gamma \cdot \gamma)^* \xi_{\mathrm{flex}}$, where $\mathcal{T}_0$ is the trivialization decided by (a). Here, $ e^{2 \pi  it} \mathcal{T}_0$ is defined as $( e^{2 \pi  it} \mathcal{T}_0 )(t, V)=  \mathcal{T}_0 (t,  e^{2 \pi  it} \cdot V). $
    \item [(d).] Assume $\gamma$ and $\mathcal{T}'$ have the same definitions as in (b) and (c). Then $b(\gamma, \mathcal{T}') \in Br_m$ is a pure braid.
\end{enumerate}

\end{lemma}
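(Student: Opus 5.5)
We treat the four parts in order. The common tool is the lifting of homotopies of loops to homotopies of pairs (loop, trivialization), in the sense of Definition \ref{trivialization_homotopy}.

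\textbf{Part (a).} Let $H:[0,1]\times S^1\to\tilde{\mathcal{X}}_{\mathrm{flex}}$ be a based nullhomotopy from the constant loop $c$ at $(F_0,p_0)$ to $\gamma_0$. The square $[0,1]\times S^1$ with the base edge $[0,1]\times\{0\}$ collapsed is contractible, so $H^*\xi_{\mathrm{flex}}$ is trivial. We choose a trivialization $T$ that equals $h$ on the base edge; this is possible because trivializations of a line bundle over a contractible space extending a prescribed one on a contractible subcomplex exist. Restricting $T$ to $\{1\}\times S^1$ gives $\mathcal{T}_0$, and restricting to $\{0\}\times S^1$ gives a trivialization of the constant loop's pullback, namely a loop in $\mathrm{GL}_1(\mathbb{C})$ based at $h$. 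By multiplying $T$ by a suitable map $[0,1]\times S^1\to\mathbb{C}^*$ extending the inverse of that loop (possible after adjusting by powers of $e^{2\pi i t}$, which extend over the square), we may assume this restriction is the constant trivialization $h$. Then $(c,h)$ is based homotopic to $(\gamma_0,\mathcal{T}_0)$. By Lemma \ref{braid}(1), $b(\gamma_0,\mathcal{T}_0)=b(c,h)$, and $\Omega(c,h)$ is the constant loop at $A$, so it is the trivial braid.

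\textbf{Part (b).} By Proposition \ref{flex}, $\pi_1(\tilde{\mathcal{X}}_{\mathrm{flex}})\cong \mathrm{SL}_2(\mathbb{Z})\times Z(K)$, with $\rho$ the identity on the first factor. We take $\gamma$ to represent $(-I,1)$. Its square is trivial while $\gamma$ itself is not, and its monodromy is $-I$.

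\textbf{Part (c).} Take any trivialization $\mathcal{T}'$ of $\gamma^*\xi_{\mathrm{flex}}$ restricting to $h$ at $0$. Then $\mathcal{T}'\cdot\mathcal{T}'$ and $\mathcal{T}_0$ are two trivializations of $(\gamma\cdot\gamma)^*\xi$ agreeing at the base point, and they differ by a map $S^1\to\mathbb{C}^*$ of some degree $d$. Only this degree matters up to based homotopy, so $\mathcal{T}'\cdot\mathcal{T}'\simeq e^{2\pi i d t}\mathcal{T}_0$. Replacing $\mathcal{T}'$ by $e^{2\pi i k t}\mathcal{T}'$ changes $d$ by $2k$, so we can normalize $d$ only mod $2$. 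The main obstacle is therefore showing that $d$ is odd, i.e.\ that the rotation number of the tangent line along $\gamma$ relative to the nullhomotopy framing is a half-integer.

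For this we use Lemma \ref{mapping_torus}. Along $\gamma$ the fiber is identified with the mapping torus of $v\mapsto -v$, and the inflection point is the zero section. Hence the derivative of the monodromy on $T_{p_0}C_{F_0}$ is $-1$, which already suggests that the natural trivialization along $\gamma$ turns by half a rotation. To make this precise we compute via a concrete model. Take the Hesse-type family $x^3+y^3+z^3+\lambda xyz$ restricted to a loop in a $\mathrm{PGL}_3$-orbit realizing $-I$, e.g.\ the path of matrices $\mathrm{diag}(1,e^{\pi i s},e^{-\pi i s})$ composed with the swap $y\leftrightarrow z$, which fixes $[1:-1:0]$-type flexes and acts by $-1$ on the tangent line. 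Alternatively, we can argue that the obstruction class of $\xi_{\mathrm{flex}}$ (its first Chern class restricted to the $\mathbb{Z}/2$ torsion of $\pi_1$) is detected by $\xi^{\otimes 2}$ being trivial along $\gamma\cdot\gamma$ while $\xi$ is not. If $d$ were even we could choose $\mathcal{T}'\cdot\mathcal{T}'\simeq\mathcal{T}_0$, and I expect to derive a contradiction from the action $-1$ on the fiber via a lifting to the $\mathrm{SL}_2(\mathbb{R})$-type double cover. After normalizing, $d=1$.

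\textbf{Part (d).} The permutation induced by $b(\gamma,\mathcal{T}')$ on the $m$ points of $A$ is the monodromy of the cover $f:\tilde{\mathcal{X}}_n\to\tilde{\mathcal{X}}_{\mathrm{flex}}$ along $\gamma$. Since $-I\in\Gamma_n$, and $\tilde\Gamma_n$ is conjugate into $\tilde\Gamma_{\mathrm{flex}}$ with the central element $-I$ (and $Z(K)$) lying in every conjugate of $\tilde\Gamma_n$, the element $[\gamma]$ lies in every conjugate of $\pi_1(\tilde{\mathcal{X}}_n)$. Hence $\gamma$ lifts to closed loops at every preimage, the permutation is trivial, and the braid is pure.
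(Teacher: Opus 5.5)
Parts (a), (b) and (d) are sound and essentially match the paper. In (d) you use that $[\gamma]$ is central and lies in $\pi_1(\tilde{\mathcal{X}}_n)$, which is the paper's coset argument in different words. Your reduction in (c) is also correct: everything hinges on showing that the degree $d$ of $\mathcal{T}'\cdot\mathcal{T}'$ relative to $\mathcal{T}_0$ is odd. But that is exactly the step you do not prove, and it is the heart of the lemma.

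The observation that the monodromy $v\mapsto -v$ has derivative $-1$ at the flex only gives you a framing along $\gamma$ that closes up over $\gamma\cdot\gamma$. Since $d$ compares two framings, this says nothing about its parity until you show that this framing can itself serve as a $\mathcal{T}_0$ coming from a nullhomotopy of $\gamma\cdot\gamma$. The other routes you mention are not carried out either. The path you describe in the Hesse-pencil model does not run from the identity to the involution $y\leftrightarrow z$, so it does not define the loop you want. Even with a correct path, you would still have to control the doubled path in $\pi_1(\mathrm{PGL}_3(\mathbb{C}))\cong\mathbb{Z}/3$ so that the framing extends over a nullhomotopy. The ``obstruction class'' and ``$\mathrm{SL}_2(\mathbb{R})$ double cover'' remarks are only stated expectations.

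The missing idea, which is how the paper proceeds, is to build the framing over the whole nullhomotopy from the torus bundle.
\begin{itemize}
\item Trivialize $H^*(p_{\mathrm{flex}}^*E)$ over the annulus $[0,1]\times S^1$ so that the flex section goes to $(0,0)$. Arrange also that the trivialization is constant on the base edge and on the collapsed end (Lemma \ref{torus_trivial}).
\item This gives a continuous family of based loops $\theta_{s,t}(\tau)=\mathbf{U}^{-1}((\tau,0),s,t)$ in the fibers. Replace each loop by its flat geodesic and let $\sigma(s,t)$ be the initial velocity.
\item Then $\sigma$ is a nonvanishing section of $H^*\xi_{\mathrm{flex}}$, constant on the base edge and the collapsed end. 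Hence it yields an admissible $\mathcal{T}_0$ for (a).
\item Transporting $\theta_{0,t}$ once around $\gamma$ negates its class, because the monodromy is $-I$. Geodesic velocity is odd in the class (Lemma \ref{geodesic}(b)). Together these give $\sigma(0,t+\tfrac12)=-\sigma(0,t)$.
\end{itemize}
With this, $\mathcal{T}'(z\,\sigma(0,t))=e^{2\pi i t}z\,\sigma(1,t)$ is $\tfrac12$-periodic, so it descends to a trivialization of $\gamma^*\xi_{\mathrm{flex}}$. Moreover $\mathcal{T}'\cdot\mathcal{T}'=e^{2\pi i t}\mathcal{T}_0$ on the nose, i.e.\ $d=1$.

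Your mapping-torus heuristic becomes a proof once you add exactly this link. The torus-bundle trivialization along $\gamma\cdot\gamma$ extends over the nullhomotopy, so the tangent framing it induces at the flex is one of the trivializations $\mathcal{T}_0$ from (a).
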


(a) in Lemma \ref{trivial_braid} will be proved in 4.3. (b), (c), and (d) will be proved in 4.4.  

\vspace{2mm}

Lemma \ref{trivial_braid} has a sextatic version:

\begin{lemma} \label{trivial_braid, only sextatic}

Let $( \tilde{\mathcal{X}}_n, p_n, s_n) $ be a sextatic-cased virtual section.
Suppose $b(\gamma_0, \mathcal{T}_0) \in Br_m$ is the braid that satisfy the conditions in Proposition \ref{braid, only sextatic}.

\begin{enumerate} \setlength{\itemindent}{1em}
    \item [(a).] For any nullhomotopic loop $\gamma_0$ in $\tilde{\mathcal{X}}_{\mathrm{sext}}$ with base point $(F_0,q_0)$, there exists a trivialization $\mathcal{T}_0$ of $\gamma_0^* \xi_{\mathrm{sext}}$ such that $b(\gamma_0, \mathcal{T}_0)=1 \in Br_m$.
    \item [(b).] There exists a loop $\gamma$ in $\tilde{\mathcal{X}}_{\mathrm{sext}}$ with base point $(F_0,q_0)$ such that $\gamma \cdot \gamma$ is a nullhomotopic loop while $\gamma$ is not. Its monodromy action on $H_1(C_F; \mathbb{Z})$ is $-I$.
    \item [(c).] There exists a trivialization $\mathcal{T}'$ of $\gamma^* \xi_{\mathrm{sext}}$, such that $ \mathcal{T}' \cdot \mathcal{T}' \simeq e^{2 \pi  it} \mathcal{T}_0 $ as trivializations of $(\gamma \cdot \gamma)^* \xi_{\mathrm{sext}}$, where $\mathcal{T}_0$ is the trivialization decided by (a). Here, $ e^{2 \pi  it} \mathcal{T}_0$ is defined as $( e^{2 \pi  it} \mathcal{T}_0 )(t, V)=  \mathcal{T}_0 (t,  e^{2 \pi  it} \cdot V). $
    \item [(d).] Assume $\gamma$ and $\mathcal{T}'$ have the same definitions as in (b) and (c). Then $b(\gamma, \mathcal{T}') \in Br_m$ is a pure braid.
\end{enumerate}

\end{lemma}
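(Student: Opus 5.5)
The sextatic version (Lemma \ref{trivial_braid, only sextatic}) is proved by transporting, essentially verbatim, the arguments for Lemma \ref{trivial_braid} from $\tilde{\mathcal{X}}_\mathrm{flex}$ to $\tilde{\mathcal{X}}_\mathrm{sext}$. The transport replaces $s_\mathrm{flex}$, $\xi_\mathrm{flex}$ and $m=n/9$ by $s_\mathrm{sext}$, $\xi_\mathrm{sext}$ and $m=n/27$. The only new inputs are the group-theoretic facts about $\Gamma_\mathrm{sext}$ recorded in Proposition \ref{sext}, together with the fact that $\pi_1(\tilde{\mathcal{X}}_\mathrm{sext}) = \Gamma_\mathrm{sext}\times Z(K)$.

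For (a), take a nullhomotopy $H:[0,1]\times S^1\to\tilde{\mathcal{X}}_\mathrm{sext}$ from $\gamma_0$ to the constant loop at $(F_0,q_0)$. The square $[0,1]\times S^1$ with the appropriate collapse is contractible, so $H^*\xi_\mathrm{sext}$ is trivial. We choose a trivialization extending $h$ along the base-point segment $\{(s,0)\}$. It restricts to a trivialization $\mathcal{T}_0$ of $\gamma_0^*\xi_\mathrm{sext}$ and to the constant trivialization $h$ over the constant loop. By Lemma \ref{braid, only sextatic}(1), $b(\gamma_0,\mathcal{T}_0)$ then equals the braid of the constant loop with constant trivialization, and the latter is the constant loop $A$ in $\mathrm{UConf}_m(\mathbb{C})$, so it is $1$.

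For (b), note that $-I\in\Gamma_\mathrm{sext}$ by Proposition \ref{sext}(b), since $-I\equiv I \bmod 2$. Choose $\gamma$ representing $(-I,1)\in\Gamma_\mathrm{sext}\times Z(K)$. Then $\gamma\cdot\gamma$ represents $(I,1)$, which is trivial, while $\gamma$ itself is nontrivial, and its monodromy is $\rho(\gamma)=-I$ by construction.

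For (c), compare the trivialization $\mathcal{T}'\cdot\mathcal{T}'$ of $(\gamma\cdot\gamma)^*\xi_\mathrm{sext}$ with $\mathcal{T}_0$, transported to $\gamma\cdot\gamma$ via the nullhomotopy. Any two trivializations with the same value $h$ at the base point differ by a map $S^1\to\mathbb{C}^*$, so they differ by $e^{2\pi i k t}$ up to homotopy for some winding number $k$. Changing $\mathcal{T}'$ by $e^{2\pi i j t}$ changes $\mathcal{T}'\cdot\mathcal{T}'$ by winding $2j$, so only the parity of $k$ is an invariant, and we must show that $k$ is odd. This is the \emph{main obstacle}. My plan is to compute it in the mapping-torus model of Lemma \ref{mapping_torus}. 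Along $\gamma$ the fiber tori are glued by $v\mapsto -v$, so the tangent line at the 2-torsion sextatic point $q_0$ returns with the derivative $-1$. Choosing the flat trivialization along $[0,1]$ makes $\mathcal{T}'$ close up via the rotation $e^{\pi i t}$. Doubling gives a full turn $e^{2\pi i t}$ relative to the trivialization that extends over the nullhomotopy of $\gamma\cdot\gamma$ (concretely, over a disk in the upper half-plane model, where $-I$ acts as rotation by $\pi$ at an elliptic point). This parity computation is identical to the inflection case, because it depends only on the differential of $-I$ at a fixed point of the involution, and $q_0$ is such a fixed point.

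For (d), observe that $\gamma$ has monodromy $-I$ and that, by Proposition \ref{homotopic_sectionmap}(b), the $m$ points of $\tilde{\mathcal{X}}_n$ over $(F,q)$ correspond to $m$ distinct tangent vectors $V_i$ at $q$. These are initial velocities of geodesics from $q$ to points $x_i$. Along $\gamma$, each sheet of $\tilde{\mathcal{X}}_n$ over $\tilde{\mathcal{X}}_\mathrm{sext}$ returns to itself, because $\pi_1(\tilde{\mathcal{X}}_n)$ contains a lift of $\gamma$ whenever $-I\in\Gamma_n$: the cover $\tilde{\mathcal{X}}_n\to\tilde{\mathcal{X}}_\mathrm{sext}$ corresponds to a subgroup containing the class $(-I,\ast)$, and since that class is central it is contained in every conjugate. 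Hence every point returns to its own sheet, and the braid induces the trivial permutation, that is, it is pure. I expect the centrality argument together with the fact that $Z(K)$ lies in every fundamental group to make this step routine.
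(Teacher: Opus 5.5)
Your arguments for (a), (b) and (d) are essentially the paper's:
\begin{itemize}
\item in (a), you trivialize $H^*\xi_{\mathrm{sext}}$ through the collapsed square;
\item in (b), you take $\gamma$ representing $(-I,1)\in\Gamma_{\mathrm{sext}}\times Z(K)$;
\item in (d), you use that this central class lies in $\pi_1(\tilde{\mathcal{X}}_n)$, because $-I\in\Gamma_n$ and $Z(K)\subset\pi_1(\tilde{\mathcal{X}}_n)$.
\end{itemize}
Your reduction of (c) to showing that the winding number $k$ of $\mathcal{T}'\cdot\mathcal{T}'$ relative to $\mathcal{T}_0$ is odd is also correct.

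The gap is in the step meant to prove that parity. The mapping-torus model of Lemma \ref{mapping_torus} lives only over the loop $\gamma$. It shows that the flat frame induced by $d\Psi_t$ returns multiplied by $-1$. But a frame along $\gamma$ can be arranged to return with any factor in $\mathbb{C}^*$. So this $-1$ tells you something about $k$ only once you know that the doubled frame is homotopic, with base value fixed, to the frame $\mathcal{T}_0$ that extends over the nullhomotopy $H$. That is exactly what you assert without proof. The parenthetical does not supply it. The matrix $-I$ acts trivially on $\mathbb{H}$, so elliptic points are irrelevant; the rotation by $\pi$ takes place in the fiber torus, over every point. And nothing in your sketch relates the disk $H$ in $\tilde{\mathcal{X}}_{\mathrm{sext}}$ to such a model.

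The missing idea is the paper's Lemma \ref{sigma_section}. Geodesic velocity (Lemma \ref{geodesic}) gives maps $P\colon\pi_1(C_F,q)\to T_qC_F$, varying continuously with $(F,q)$, with $P(-\lambda)=-P(\lambda)$. Your frame $d\Psi_t^{-1}(e_1)$ is precisely $P$ applied to a flat family of homology classes. Over the nullhomotopy the homology local system is trivial: Lemma \ref{torus_trivial} trivializes $H^*(p_{\mathrm{sext}}^*E)$ with the sextatic section sent to the origin. So a fixed class yields a nonvanishing section $\sigma$ of $H^*\xi_{\mathrm{sext}}$ on all of $[0,1]\times S^1$, constant on the collapsed edges. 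On the boundary, the monodromy $-I$ of $\gamma$ forces $\sigma(0,t+\tfrac12)=-\sigma(0,t)$.

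Now build $\mathcal{T}_0$ from this $\sigma$, which is a legitimate choice in (a). Let $\mathcal{T}'$ send $z\,\sigma(0,t)$ to $e^{2\pi i t}z\,\sigma(1,t)$. This map is $\tfrac12$-periodic, hence descends to a trivialization of $\gamma^*\xi_{\mathrm{sext}}$, and $\mathcal{T}'\cdot\mathcal{T}'\simeq e^{2\pi i t}\mathcal{T}_0$. This gives $k=1$ directly, and with this ingredient added your parity argument closes.
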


We will not separate the proof of Lemma \ref{trivial_braid, only sextatic} from the proof of Lemma \ref{trivial_braid}. Instead, when we prove (a), (b), (c), (d) in Lemma \ref{trivial_braid} later, we will prove their corresponding sextatic versions at the same time. 

\vspace{3mm}

We now prove Proposition \ref{m=1} using Lemma \ref{trivial_braid} and \ref{trivial_braid, only sextatic}.

\begin{proof}[Proof of Proposition \ref{m=1}]

Recall that connected virtual sections $\tilde{\mathcal{X}}_n$ such that $-I \in \Gamma_n$ can be divided into inflection-cased virtual sections and sextatic-cased virtual sections. We first consider the inflection-cased virtual sections.

Let $\gamma$ be a loop satisfying the conditions in (b) in Lemma \ref{braid}.
Consider $b(\gamma \cdot \gamma, \mathcal{T}' \cdot \mathcal{T}')$. On the one hand, $b(\gamma \cdot \gamma, \mathcal{T}' \cdot \mathcal{T}')$ is equal to $b(\gamma, \mathcal{T}') \cdot b(\gamma, \mathcal{T}')$ by the second condition in Lemma \ref{braid}. On the other hand, by (c) in Lemma \ref{trivial_braid}, $b(\gamma \cdot \gamma, \mathcal{T}' \cdot \mathcal{T}')$ is also equal to $b(\gamma \cdot \gamma, e^{2 \pi  it}  \mathcal{T}_0 )$. By (a) in Lemma \ref{trivial_braid}, we know that $b(\gamma \cdot \gamma, \mathcal{T}_0 )$ is a trivial braid. By the third condition in Lemma \ref{braid}, $b(\gamma \cdot \gamma, e^{2 \pi  it}  \mathcal{T}_0 ) = \Delta^2 \cdot b(\gamma \cdot \gamma,  \mathcal{T}_0 ) = \Delta^2$. 

To summarize, we have $b(\gamma, \mathcal{T}')^2 = \Delta^2$. By Theorem 1.1 of \cite{Braid_root}, any braid $b$ satisfying $b^2 = \Delta^2$ is a conjugate of $\Delta$. For $m \geq 2$, $\Delta$ is not a pure braid, and neither is $b(\gamma, \mathcal{T}')$, because a conjugate of a pure braid is still a pure braid. However, by (d) in Lemma \ref{trivial_braid}, $b(\gamma, \mathcal{T}')$ should be a pure braid. Hence $m=1$, and any inflection-cased virtual section $\tilde{\mathcal{X}}_n$ is homotopic to $\tilde{\mathcal{X}}_{\mathrm{flex}}$. 

For sextatic-cased virtual sections, the proof is similar: Using Lemma \ref{trivial_braid, only sextatic}, by going through the exact same procedures, we also have a pure braid $b(\gamma, \mathcal{T}')$ such that $b(\gamma, \mathcal{T}')^2 = \Delta^2$, hence $m=1$, and $\tilde{\mathcal{X}}_n$ is homotopic to $\tilde{\mathcal{X}}_{\mathrm{sext}}$.

\end{proof}

In the next two subsections, we will prove Lemma \ref{trivial_braid} and \ref{trivial_braid, only sextatic} to complete the proof of Proposition \ref{m=1}. The key is to find a trivialization that satisfies the required conditions.

\subsection{Proofs of (a), Lemma \ref{trivial_braid} and \ref{trivial_braid, only sextatic}}

Let $(F_0,p_0)$ (resp. $(F_0,q_0)$) be the base point of $\tilde{\mathcal{X}}_{\mathrm{flex}}$ (resp. $\tilde{\mathcal{X}}_{\mathrm{sext}}$).
Suppose $\gamma_0 : S^1 \rightarrow \tilde{\mathcal{X}}_{\mathrm{flex}}$ is a nullhomotopic loop through a homotopy $H : [0,1] \times S^1 \rightarrow S^1$. Then $H^*\xi_{\mathrm{flex}}$ is a bundle over $[0,1] \times S^1$ such that 

$$H^*\xi_{\mathrm{flex}} |_{ \{ 0 \} \times S^1} = \gamma_0^*\xi_{\mathrm{flex}}, \ H^*\xi_{\mathrm{flex}} |_{ \{ 1 \} \times S^1} = T_{p_0}C_{F_0} \times [0,1]. $$

We want to define a trivialization of $\gamma_0^*\xi_{\mathrm{flex}}$ using $H^*\xi_{\mathrm{flex}}$. We start by constructing a nonvanishing section $\sigma: [0,1] \times S^1 \rightarrow  H^*\xi_{\mathrm{flex}}$ such that  $\sigma(s,0)  =\sigma (1,t)$ in $T_{p_0}C_{F_0}$ for any $s,t \in [0,1]$.

\begin{lemma}
There exists a nonvanishing section $\sigma: [0,1] \times S^1 \rightarrow  H^*\xi_{\mathrm{flex}}$ (resp. $H^*\xi_{\mathrm{sext}}$) such that  $\sigma(s,0)  =\sigma (1,t)$ in $T_{p_0}C_{F_0}$ (resp. $T_{q_0}C_{F_0}$) for any $s,t \in [0,1]$. 
\end{lemma}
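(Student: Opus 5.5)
The plan is to build $\sigma$ by first prescribing it on the closed subset $A = (\{1\}\times S^1) \cup ([0,1]\times\{0\})$ of the annulus $[0,1]\times S^1$, and then extending it to the whole annulus. This works because $A$ is a deformation retract of the annulus.

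Here the homotopy $H$ is a based nullhomotopy: $H(s,0) = (F_0,p_0)$ for all $s$, and $H(1,t) = (F_0,p_0)$ for all $t$. Hence the fibers of $H^*\xi_{\mathrm{flex}}$ over every point of $A$ are canonically the single vector space $T_{p_0}C_{F_0}$. We fix a nonzero vector $V_0 \in T_{p_0}C_{F_0}$ and define $\sigma \equiv V_0$ on $A$. This is continuous, nonvanishing, and satisfies the required condition $\sigma(s,0) = \sigma(1,t) = V_0$ by construction.

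To extend, we use that $[0,1]\times S^1$ deformation retracts onto $A$. Indeed, $A$ is the union of a circle and a radial segment meeting it at one point. Cutting the annulus along $\{t=0\}$ gives the square $[0,1]\times[0,1]$, and the image of $A$ becomes three of its four sides, $\{s=1\}\cup\{t=0\}\cup\{t=1\}$. There is a standard retraction $r$ of the square onto these three sides, obtained by radial projection from a point beyond the fourth side $\{s=0\}$. This retraction is compatible with the identification $t=0 \sim t=1$, since on both of those edges it is the identity. So it descends to a retraction $r:[0,1]\times S^1\to A$ that is homotopic to the identity rel $A$.

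Next we trivialize the bundle. Since $[0,1]\times S^1$ is homotopy equivalent to $A$, which is homotopy equivalent to $S^1$, and every complex line bundle over $S^1$ is trivial, $H^*\xi_{\mathrm{flex}}$ is trivial. More concretely, pullbacks along homotopic maps are isomorphic, so $H^*\xi_{\mathrm{flex}} \cong r^*(H|_A)^*\xi_{\mathrm{flex}}$. The bundle $(H|_A)^*\xi_{\mathrm{flex}}$ is the constant bundle $T_{p_0}C_{F_0}\times A$, and this isomorphism can be chosen to be the identity over $A$, because the homotopy from $\mathrm{id}$ to $r$ is stationary on $A$. We then set $\sigma = r^*(V_0)$ transported through this isomorphism. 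The result is nonvanishing everywhere and restricts to $V_0$ on $A$.

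The step I expect to need the most care is exactly this last one: making sure the bundle isomorphism coming from the homotopy $\mathrm{id}\simeq r$ rel $A$ is the identity over $A$. Only then does the extended section genuinely equal $V_0$ there, rather than merely being homotopic to it. I would handle it with the relative covering homotopy property for vector bundles, or equivalently by extending a nonvanishing section over the relative CW pair built from $A$ by attaching a single $2$-cell. For that single $2$-cell, the obstruction is the class of the boundary loop in $\pi_1(\mathbb{C}^*)=\mathbb{Z}$. I would compute it in a local trivialization over the disk, where the section along the attaching map runs as $V_0$ out along $\{t=0\}$, around the circle $\{s=1\}$, and back along $\{t=0\}$ reversed. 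Its winding number is therefore zero, so the extension exists.

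The sextatic case is word for word the same, with $\xi_{\mathrm{sext}}$ and $q_0$ in place of $\xi_{\mathrm{flex}}$ and $p_0$.
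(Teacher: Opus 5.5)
Your proof is correct and is essentially the paper's argument. The paper also cuts along $t=0$, makes the section constant on the three sides $\{s=1\}\cup\{t=0\}\cup\{t=1\}$ of the square, where every fiber is $T_{p_0}C_{F_0}$, and then extends. It does the extension more cheaply: it takes any nonvanishing section $\sigma'$ over the square and multiplies it by an extension $Z$ of the correction factor $z$, so no relative bundle isomorphism is needed; $Z$ must be nonvanishing, which your retraction supplies.

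There is one slip in your backup obstruction-theoretic argument. The annulus is obtained from $A$ by attaching a $1$-cell ($\{0\}\times S^1$) and a $2$-cell, not a single $2$-cell. The loop you describe is the generator of $\pi_1(A)$, not an attaching map. The correct reason there is no obstruction is that the section over that $1$-cell can be chosen freely (equivalently, the boundary of the square meets $A$ in an arc); your winding computation does not establish it.
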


\begin{proof}

Such a section can be constructed by the following procedures:

\begin{enumerate}
    \item Let $g: [0,1] \times [0,1] \rightarrow [0,1] \times S^1 \cong [0,1] \times ([0,1]/\{0 \sim 1 \}) $ be the quotient map.
    Consider any nonvanishing section $\sigma': [0,1] \times [0,1] \rightarrow  g^*H^*\xi_{\mathrm{flex}}$ (resp. $g^*H^*\xi_{\mathrm{sext}}$). 
    \item Since $\xi_{\mathrm{flex}}$ (resp. $\xi_{\mathrm{sext}}$) is a complex line bundle, there is a complex scalar multiplication structure on each fiber of $g^*H^*\xi_{\mathrm{flex}}$ (resp. $g^*H^*\xi_{\mathrm{sext}}$). For any $(s,t)$ such that either $s=1$ or $t=0 \ \mathrm{or} \ 1$, the fiber at point $(s,t)$ is $T_{p_0}C_{F_0}$ (resp. $T_{q_0}C_{F_0}$), and  there is a unique complex number $z(s,t)$ such that $z(s,t) \cdot \sigma'(s,t) = \sigma'(0,0)$. Then $z$ is a continuous function on $([0,1] \times \{0,1\}) \cup (\{1\} \times [0,1])$.
    \item Let $Z : [0,1] \times [0,1] \rightarrow \mathbb{C}$ be any continuous function that satisfies \newline  $Z |_{([0,1] \times \{0,1\}) \cup (\{1\} \times [0,1]) } = z$, and let $\sigma(s,t)=Z(s,t) \cdot \sigma'(s,t)$. Then $\sigma$ satisfies the condition $\sigma(s,0) = \sigma(s,1) =\sigma (1,t)$ for any $s,t \in [0,1]$. 
    \item $\sigma(s,0) = \sigma(s,1) $ guarantees that $\sigma$ gives a well defined  section of $H^*\xi_{\mathrm{flex}}$ (resp. $H^*\xi_{\mathrm{sext}}$). We also denote it by $\sigma$.
\end{enumerate}

\end{proof}

We are now ready to prove (a) in Lemma \ref{trivial_braid} and \ref{trivial_braid, only sextatic}.

\begin{proof}[Proofs of (a), Lemma \ref{trivial_braid} and \ref{trivial_braid, only sextatic}]

Consider (a) in Lemma \ref{trivial_braid} first.
By Lemma 4.6, there exists a section $\sigma$ of $H^*\xi_{\mathrm{flex}}$ such that $\sigma(s,0) = \sigma (1,t)$. Define 

$$ \mathbf{U} : H^* \xi_{\mathrm{flex}} \rightarrow [0,1] \times T_{p_0}C_{F_0},$$ 
$$((s,t), z \cdot \sigma(s,t)) \mapsto  ((1,t), z \cdot \sigma(1,t)), z \in \mathbb{C},$$
where $z \cdot \sigma(0,t)$ and $z \cdot \sigma(1,t)$ are decided by the complex structure on fibers. Since $H^*\xi_{\mathrm{flex}} |_{ \{ 0 \} \times S^1} = \gamma_0^*\xi_{\mathrm{flex}} $, we know that $\mathbf{U}|_{ \{0\} \times S^1}$ is actually a trivialization of $\gamma_0^*\xi_{\mathrm{flex}}$. Denote $\mathcal{T}_s = \mathbf{U} |_{ \{s\} \times S^1}$.  We will show that $\mathcal{T}_0$ satisfies $b(\gamma_0, \mathcal{T}_0) =1$. 

Let $\gamma_s(t) = H(s,t)$. Then each $\mathcal{T}_s$ is a trivialization of $\gamma_s^* \xi_{\mathrm{flex}}$. Because $\sigma(s,0)$ stays constant when $s$ changes, $\mathbf{U} |_{(s,0)}$ is a constant map on $T_{p_0}C_{F_0}$ for any $s \in [0,1]$. Therefore, the pair $(H, \mathbf{U})$ is a based homotopy from $(\gamma_0, \mathcal{T}_0)$ to $(\gamma_1, \mathcal{T}_1)$ (the definition of based homotopy of the pair $(\gamma, \mathcal{T})$ can be found in Definition \ref{trivialization_homotopy}). By Lemma \ref{braid}, This based homotopy implies that $b(\gamma_0, \mathcal{T}_0)=b(\gamma_1, \mathcal{T}_1)$.

It remains to show that $b(\gamma_1, \mathcal{T}_1) $ is the identity in $Br_m$.
Indeed, because $\gamma_1$ is a constant loop, $\gamma_1^*( \tilde{s}_n(\tilde{\mathcal{X}}_n))$ is $A \times S^1 \subset T_{p_0}C_{F_0} \times S^1$, where $A$ is the intersection between $\tilde{s}_n(\tilde{\mathcal{X}}_n)$ and $T_{p_0}C_{F_0}$. Since $ \mathcal{T}_1$ is the identity map, $\mathcal{T}_1 (\gamma_1^*( \tilde{s}_n(\tilde{\mathcal{X}}_n)) ) = A \times S^1 $  and $\Omega(\gamma_1 ,\mathcal{T}_1)$ is a constant loop at point $A$. Therefore $b(\gamma_1 ,\mathcal{T}_1)=1$.

The proof of (a), Lemma \ref{trivial_braid, only sextatic} is almost the same, where we only need to replace all the $\xi_{\mathrm{flex}}$ with $\xi_{\mathrm{sext}}$.

\end{proof}

As a remark, we see that as long as we go through the process listed in the proof above, $b(\gamma_1 ,\mathcal{T}_1)=1$ no matter what the choices of $H$ and $\sigma$ are. 

\subsection{Proofs of (b), (c), (d), Lemma \ref{trivial_braid} and Lemma \ref{trivial_braid, only sextatic}}

In this subsection, we focus on a loop $\gamma$ in $\tilde{\mathcal{X}}_{\mathrm{flex}}$, such that $\gamma \cdot \gamma$ is a nullhomotopic loop, and $\gamma$ is not. We first need to show that such $\gamma$ exists, which is exactly (b) in Lemma \ref{trivial_braid} and \ref{trivial_braid, only sextatic}.

\begin{proof}[Proofs of (b), Lemma \ref{trivial_braid} and \ref{trivial_braid, only sextatic}]
By Corollary 3.12 in \cite{B_Chen}, the monodromy action $\rho$ maps  $\pi_1( \tilde{\mathcal{X}}_{\mathrm{flex}} )/ Z(K)$ to $\Gamma_{\mathrm{flex}}$ and  $\pi_1( \tilde{\mathcal{X}}_{\mathrm{sext}} )/ Z(K)$ to $\Gamma_{\mathrm{sext}}$, where $\Gamma_{\mathrm{flex}}$ and $\Gamma_{\mathrm{sext}}$ are subgroups of $\mathrm{SL}_2(\mathbb{Z})$, and $Z(K)$ is a group isomorphic to $\mathbb{Z}/ 3\mathbb{Z}$. By Lemma 6.2 in \cite{B_Chen} (or see Proposition \ref{flex} and \ref{sext}), both $\Gamma_{\mathrm{flex}}$ and $\Gamma_{\mathrm{sext}}$ contain $-I$.

By Equation (4.9) in \cite{Dolgachev}, $Z(K)$ is the center of $\pi_1(\mathcal{X})$, so $Z(K)$ lies in the center of $\pi_1( \tilde{\mathcal{X}}_{\mathrm{flex}})$ or $\pi_1( \tilde{\mathcal{X}}_{\mathrm{sext}})$, since both are subgroups of $\pi_1(\mathcal{X})$. Therefore $\pi_1( \tilde{\mathcal{X}}_{\mathrm{flex}} ) = \Gamma_{\mathrm{flex}} \times Z(K) $ and $\pi_1( \tilde{\mathcal{X}}_{\mathrm{sext}} ) = \Gamma_{\mathrm{sext}} \times Z(K) $, so $(-I, 0)$ is a nontrivial element in both $\pi_1( \tilde{\mathcal{X}}_{\mathrm{flex}} )$ and $\pi_1( \tilde{\mathcal{X}}_{\mathrm{sext}} )$. Also $(-I, 0) \cdot (-I, 0) = (I, 0)$. Take $\gamma$ to be a loop in this homotopy class, and it satisfies the conditions in (b), Lemma \ref{trivial_braid} and \ref{trivial_braid, only sextatic}.

\end{proof}

Because $\gamma \cdot \gamma$ is nullhomotopic, we already know a trivialization of $(\gamma \cdot \gamma)^*\xi_{\mathrm{flex}}$ from (a), Lemma \ref{trivial_braid} or \ref{trivial_braid, only sextatic}. In this subsection, we aim to find a trivialization of $\gamma^*\xi_{\mathrm{flex}}$ starting from the trivialization of $(\gamma \cdot \gamma)^*\xi_{\mathrm{flex}}$. 

Let $H: [0,1] \times S^1 \rightarrow \tilde{\mathcal{X}}_{\mathrm{flex}}$ be a homotopy from $\gamma \cdot \gamma$ to the constant loop at $(F_0,p_0)$. Denote $\gamma_s(t)=H(s,t)$, then $\gamma_0 = \gamma \cdot \gamma$ and $\gamma_1 = id_{(F_0,p_0)}$. Recall that in 4.3, we use a section $\sigma$ of $H^* \xi_{\mathrm{flex}}$ to construct $\mathcal{T}_0$. Our first goal is the following lemma:

\begin{lemma} \label{sigma_section}
Let  $\gamma : S^1 \rightarrow \tilde{\mathcal{X}}_{\mathrm{flex}}$ (resp. $\tilde{\mathcal{X}}_{\mathrm{sext}}$) be a loop such that $\gamma \cdot \gamma$ is nullhomotopic and $\gamma$ is not, and $H: [0,1] \times S^1 \rightarrow \tilde{\mathcal{X}}_{\mathrm{flex}}$ (resp. $\tilde{\mathcal{X}}_{\mathrm{sext}}$) be a homotopy from $\gamma \cdot \gamma$ to the constant loop at $(F_0,p_0)$ (resp. $(F_0,q_0)$).

Then there exists a section $\sigma : [0,1] \times S^1 \rightarrow H^* \xi_{\mathrm{flex}}$ (resp. $H^* \xi_{\mathrm{sext}}$), such that $\sigma(0, t+\frac{1}{2}) = - \sigma(0, t)$ for any $t \in S^1$.
\end{lemma}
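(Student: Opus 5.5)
The plan is to reduce everything to an explicit computation in a global trivialization of $H^*\xi_{\mathrm{flex}}$ (resp. $H^*\xi_{\mathrm{sext}}$). The key observation is that on the edge $s=0$ the loop is $\gamma_0=\gamma\cdot\gamma$, so $\gamma_0(t)=\gamma_0(t+\frac12)=\gamma(2t)$. Hence the fibers of $H^*\xi_{\mathrm{flex}}$ over $(0,t)$ and $(0,t+\frac12)$ are literally the same complex line $T_{p}C_F$ with $(F,p)=\gamma(2t)$, so the requirement $\sigma(0,t+\frac12)=-\sigma(0,t)$ makes sense. The proof does not use that $\gamma$ is not nullhomotopic; it only uses the coincidence of fibers on $\{0\}\times S^1$.

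First, $[0,1]\times S^1$ deformation retracts onto a circle, and every complex line bundle over $S^1$ is trivial. So I fix a trivialization $\Phi: H^*\xi_{\mathrm{flex}}\to [0,1]\times S^1\times\mathbb{C}$, and write $\Phi_{(s,t)}$ for its restriction to the fiber over $(s,t)$. Comparing the two identifications of the common fiber over $(0,t)$ and $(0,t+\frac12)$ gives a continuous function
\[
c:S^1\to\mathbb{C}^*,\qquad c(t)=\Phi_{(0,t+\frac12)}\circ\Phi_{(0,t)}^{-1}.
\]
Composing twice returns to the start, so $c(t+\frac12)\,c(t)=1$.

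Next, I look for $\sigma$ of the form $\sigma(s,t)=\Phi_{(s,t)}^{-1}(f(t))$ with $f:S^1\to\mathbb{C}^*$. Such a $\sigma$ is automatically a nonvanishing section, and the required symmetry becomes the functional equation
\[
f(t+\tfrac12)=-c(t)\,f(t).
\]
To solve it, choose $f$ on $[0,\frac12]$ to be any nonvanishing path with $f(0)=1$ and $f(\frac12)=-c(0)$. Then define $f$ on $[\frac12,1]$ by $f(t+\frac12)=-c(t)f(t)$ for $t\in[0,\frac12]$. The two definitions agree at $t=\frac12$ by the choice of $f(\frac12)$. At $t=1$ we get $f(1)=-c(\frac12)f(\frac12)=c(\frac12)c(0)f(0)=f(0)$, using $c(\frac12)c(0)=1$, so $f$ closes up to a loop. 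With $f$ continuous and nonvanishing on $S^1$, the formula for $\sigma$ proves the lemma.

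The main obstacle is downstream rather than in the statement itself. To feed this $\sigma$ into the construction of $\mathcal{T}_0$ in 4.3, I expect we also want the normalization of Lemma 4.6, namely $\sigma(s,0)=\sigma(1,t)$ constant in $T_{p_0}C_{F_0}$ (resp. $T_{q_0}C_{F_0}$). Imposing this means prescribing $\sigma$ on $\{0\}\times S^1\cup[0,1]\times\{0\}\cup\{1\}\times S^1$ and extending over the square obtained by cutting the annulus along $t=0$. The extension exists if and only if a winding number around the boundary of that square vanishes. I would try to kill this winding number using the freedom in $f|_{[0,1/2]}$: multiplying it by $e^{2\pi i k\cdot 2t}$ changes the winding of $f$ on $S^1$ by $2k$. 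This adjusts the winding only modulo $2$, and a residual parity obstruction may remain. I expect that parity is exactly what produces the factor $e^{2\pi i t}$ in (c) of Lemma \ref{trivial_braid}, so I would compute it carefully there rather than in this lemma.
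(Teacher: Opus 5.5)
As a proof of the lemma exactly as printed, your functional-equation argument is correct, and you rightly note that it never uses the hypothesis on $\gamma$. That is the warning sign. In the proof of (c) of Lemma \ref{trivial_braid}, $\mathcal{T}_0$ is built from this $\sigma$, and (a) is invoked to get $b(\gamma\cdot\gamma,\mathcal{T}_0)=1$. That step requires $\sigma$ to be constant on $[0,1]\times\{0\}\cup\{1\}\times S^1$, the normalization of Lemma 4.6. So what is actually needed, and what the paper's proof delivers, is a nonvanishing $\sigma$ that is antisymmetric on $\{0\}\times S^1$ \emph{and} normalized. This is exactly the parity you leave open.

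Your prediction about that parity is backwards. Relative to any half-periodic framing $\mathcal{T}'\cdot\mathcal{T}'$ of $(\gamma\cdot\gamma)^*\xi_{\mathrm{flex}}$, an antisymmetric section has odd winding, since a map $g:S^1\to\mathbb{C}^*$ with $g(t+\frac12)=-g(t)$ has odd degree. Hence if a normalized antisymmetric $\sigma$ exists, $\mathcal{T}_0$ differs from every $\mathcal{T}'\cdot\mathcal{T}'$ by an odd twist, and that is where the factor $e^{2\pi i t}$ in (c) comes from. If instead your residual obstruction were nonzero, $\mathcal{T}_0$ would itself be homotopic to some $\mathcal{T}'\cdot\mathcal{T}'$. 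Then $b(\gamma,\mathcal{T}')^2=1$, and the argument of Proposition \ref{m=1} would yield nothing.

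Your method cannot decide which case holds. It applies verbatim to the constant loop with the constant homotopy, where a normalized antisymmetric $\sigma$ is impossible: it would be a degree-zero map $f:S^1\to\mathbb{C}^*$ with $f(t+\frac12)=-f(t)$.

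The missing ingredient is the monodromy $\rho(\gamma)=-I$. This is forced here, since the only element of order $2$ in $\pi_1(\tilde{\mathcal{X}}_{\mathrm{flex}})\cong\Gamma_{\mathrm{flex}}\times Z(K)$ is $(-I,0)$. The paper proceeds as follows:
\begin{itemize}
\item It trivializes the torus bundle $H^*(p_{\mathrm{flex}}^*E)$ so that the flex section goes to $(0,0)$ and the trivialization is constant where $H$ is constant (Lemma \ref{torus_trivial}).
\item It takes the resulting continuous family of fiberwise loops $\theta_{s,t}$ and sets $\sigma(s,t)=P([\theta_{s,t}])$, the initial velocity of the geodesic representative (Lemma \ref{geodesic}).
\item Because $P$ identifies $\pi_1(C_F,p)$ with the period lattice in $T_pC_F$, this $\sigma$ is a flat section of the local system $H_1(C_F;\mathbb{Z})$ inside the real bundle underlying $\xi_{\mathrm{flex}}$.
\item Flatness makes $\sigma$ automatically constant where $H$ is constant, which gives the normalization.
\item Transporting $\sigma$ once along $\gamma$, i.e.\ halfway around $\{0\}\times S^1$, multiplies it by $\rho(\gamma)=-I$, which gives antisymmetry via Lemma \ref{geodesic}(b).
\end{itemize}

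Equivalently, $H$ descends to a map $\mathbb{RP}^2\to\tilde{\mathcal{X}}_{\mathrm{flex}}$, and what is needed is that the pulled-back line bundle has $c_1\neq0$ in $H^2(\mathbb{RP}^2;\mathbb{Z})\cong\mathbb{Z}/2$. It does: its underlying real bundle is flat with holonomy $-I$, i.e.\ $L\oplus L$ for the tautological line bundle $L$, so $c_1\equiv w_2=w_1(L)^2\neq0$.
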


The equation $\sigma(0, t+\frac{1}{2}) = - \sigma(0, t)$ makes sense: Since $H(0, t+\frac{1}{2}) = H(0,t)$, the fibers of $H^*\xi_{\mathrm{flex}}$ (resp. $H^*\xi_{\mathrm{sext}}$) at $(0, t+\frac{1}{2})$ and $(0, t)$ are the same.

\vspace{2mm}

The idea of the proof of Lemma \ref{sigma_section} is as follows: First, consider the pullback bundle $H^* (p_{\mathrm{flex}}^*E)$, where $p_{\mathrm{flex}}^*E$ is a bundle of $\tilde{\mathcal{X}}_{\mathrm{flex}}$, whose fiber at point $(F,p)$ is $C_F$. For each $(s,t ) \in [0,1] \times S^1$, we build a loop $\theta_{s,t}$ inside the respecting torus $C_F$ for the point $H(s,t) \in \tilde{\mathcal{X}}_{\mathrm{flex}}$. Then we homotope each $\{ \theta_{s,t} \}$ to a geodesic on the torus, and take $\sigma(s,t)$ as the velocity of the geodesic at the base point. We will prove that $\sigma(0, t+\frac{1}{2}) = - \sigma(0, t)$ using the monodromy action of $\gamma$.

The following lemma is needed in our proof to make sure that $\{ \theta_{s,t} \}$ is a continuous family of loops with respect to $s$ and $t$. 

\begin{lemma} \label{torus_trivial}
Suppose $\gamma_0$ is a nullhomotopic loop with base point $(F_0,p_0)$ (resp. $(F_0, q_0)$), and $H: [0,1] \times S^1 \rightarrow \tilde{\mathcal{X}}_{\mathrm{flex}}$ (resp. $\tilde{\mathcal{X}}_{\mathrm{sext}}$) is a homotopy from $\gamma_0$ to the constant loop at $(F_0,p_0)$ (resp. $(F_0, q_0)$). Then we have a trivialization 
$$ \mathbf{U}: H^* (p_{\mathrm{flex}}^*E) \rightarrow T^2 \times [0,1] \times S^1,$$
such that $\mathbf{U}((s,t), i ( H(s,t) ) ) =  ((0,0), s,t)$, where
$$i: \tilde{\mathcal{X}}_{\mathrm{flex}} \rightarrow p_{\mathrm{flex}}^*E, \, (F,p) \mapsto ((F,p), p)$$ 
is the natural immersion. 

For the sextatic case, we have a similar trivialization $\mathbf{U}$ from $H^* (p_{\mathrm{sext}}^*E)$ to $T^2 \times [0,1] \times S^1$.
\end{lemma}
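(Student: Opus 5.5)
The idea is to exploit the fact that $H^*(p_{\mathrm{flex}}^*E)$ is a torus bundle over the contractible space $[0,1]\times S^1$ that carries a distinguished section. This section turns every fiber into an elliptic curve with a chosen origin, so the bundle should be trivial as a bundle of groups, with the distinguished section going to the zero section.

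First I would observe that $p_{\mathrm{flex}}^*E \to \tilde{\mathcal{X}}_{\mathrm{flex}}$ has the natural section $i$, given by the marked inflection point. Using $i$ as origin, each fiber $C_F$ becomes a complex torus with a group structure, and this structure varies continuously. So $p_{\mathrm{flex}}^*E$ is a family of elliptic curves, and its structure group reduces from $\mathrm{Homeo}(T^2)$ to the automorphisms of $T^2$ fixing $0$. Up to homotopy, these are the linear automorphisms $\mathrm{SL}_2(\mathbb{Z})$, a discrete group. Concretely, I would realize the bundle as the quotient of the family of real rank-2 vector bundles $TC_F$ at the origin (equivalently $H_1(C_F;\mathbb{R})$) by the lattice bundle $H_1(C_F;\mathbb{Z})$. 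Here the exponential map at $i(F,p)$, in the flat metric from Step 2 of the proof of Lemma \ref{braid}, identifies $C_F$ with $T_pC_F/\Lambda_F$.

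Next I would pull back along $H$. The domain $[0,1]\times S^1$ deformation retracts onto $\{1\}\times S^1 \cup [0,1]\times\{0\}$. The map $H$ is constant (equal to $(F_0,p_0)$) on $\{1\}\times S^1$ and, since $H$ is a based homotopy, also on $[0,1]\times\{0\}$. Hence the flat lattice bundle $H^*H_1(C_F;\mathbb{Z})$ has trivial monodromy, because its monodromy is determined by its restriction to this set, where $H$ is constant. So the lattice bundle is canonically trivialized by parallel transport, identifying each fiber lattice with $H_1(C_{F_0};\mathbb{Z})\cong\mathbb{Z}^2$ after fixing a basis. This induces a trivialization of the real vector bundle $H_1(\cdot;\mathbb{R})$ sending the lattice to $\mathbb{Z}^2$. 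Passing to the quotient gives a fiberwise group isomorphism $C_{H(s,t)} \to \mathbb{R}^2/\mathbb{Z}^2 = T^2$ sending the origin $i(H(s,t))$ to $(0,0)$. This is exactly $\mathbf{U}$ with $\mathbf{U}((s,t),i(H(s,t)))=((0,0),s,t)$.

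The main obstacle is making the identification $C_F \cong H_1(C_F;\mathbb{R})/H_1(C_F;\mathbb{Z})$ continuous in $F$. I would do this via the Abel--Jacobi map $x\mapsto \int_{p}^{x}$ into $H^0(\Omega^1)^*/H_1$, which depends continuously on the family once the origin $p$ is fixed. If this is awkward, the fallback is an abstract argument. By Theorem \ref{obstruction}-style reasoning, or by direct homotopy lifting, any torus bundle over $[0,1]\times S^1$ whose restriction to the retract $\{1\}\times S^1\cup[0,1]\times\{0\}$ is trivial is itself trivial. One can then correct the trivialization fiberwise by translation by $-\mathbf{U}(i(H(s,t)))$, which is continuous, so that the section $i$ is sent to $(0,0)$. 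This translation trick is robust and avoids needing linear structure at all, so I would present it as the primary argument if the group-structure route gets technical. The sextatic case is identical, with $i(F,q)=((F,q),q)$ and base point $(F_0,q_0)$.
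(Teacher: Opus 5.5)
Your proof is correct, and your primary route differs from the paper's.

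\textbf{The paper's route.} The paper argues purely topologically:
\begin{enumerate}
\item It pulls $H^*(p_{\mathrm{flex}}^*E)$ back to the square $[0,1]\times[0,1]$ and takes an arbitrary trivialization $\mathbf{U}'$ there.
\item It normalizes this by the target translation $\mathbf{U}'-\mathbf{U}'(i(H(s,t)))$, the same trick as your fallback.
\item It homotopes the trivialization along the three sides $([0,1]\times\{0,1\})\cup(\{1\}\times[0,1])$, over which every fiber is $C_{F_0}$, until it is constant there.
\item It extends that homotopy over the square and descends to $[0,1]\times S^1$.
\end{enumerate}

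\textbf{Your primary route.} You use the marked point to present the bundle as $H^*\xi_{\mathrm{flex}}$ modulo the lattice local system $H_1(C_F;\mathbb{Z})$, via the fiberwise exponential (or Abel--Jacobi) map. You then trivialize that local system by parallel transport. This is legitimate because $\pi_1([0,1]\times S^1)$ is carried by $\{1\}\times S^1$, where $H$ is constant.

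This buys more than the statement asks:
\begin{enumerate}
\item $\mathbf{U}$ is a fiberwise group isomorphism.
\item $\mathbf{U}$ is automatically constant over $[0,1]\times\{0\}\cup\{1\}\times S^1$. The paper has to arrange this by hand in its step 4, and it is what later makes $\sigma$ satisfy the boundary condition needed in Subsection 4.3.
\item In Lemma \ref{sigma_section}, the loops $\theta_{s,t}(\tau)=\mathbf{U}^{-1}((\tau,0),s,t)$ are already closed geodesics. So $\sigma(s,t)$ is just the transported lattice vector, and $\sigma(0,t+\frac{1}{2})=-\sigma(0,t)$ follows immediately from $\rho(\gamma)=-I$.
\end{enumerate}
The cost is the analytic input that the fiberwise exponential map varies continuously in the family. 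The paper's argument for this lemma does not need that input.

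\textbf{Your fallback.} It replaces the paper's cut-and-reglue step with homotopy invariance of bundles over the annulus, which is cleaner. Two small points:
\begin{enumerate}
\item It rests on homotopy invariance of pullback bundles, not on Theorem \ref{obstruction}, which concerns homotopy classes of lifts rather than triviality of bundles.
\item If you also want constancy over the retract (used downstream), choose the isomorphism with the pullback of the restricted bundle to be the identity over the retract.
\end{enumerate}
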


\begin{proof}
We construct a trivialization of $H^*(p_{\mathrm{flex}}^*E)$ by the following procedures:

\begin{enumerate}
    \item  Let $g: [0,1] \times [0,1] \rightarrow [0,1] \times S^1 = [0,1] \times ([0,1]/\{0 \sim 1 \}) $ be the quotient map. Then $ g^*H^*(p_{\mathrm{flex}}^*E)$ is a torus bundle over $[0,1] \times [0,1]$, which is trivial since $[0,1] \times [0,1]$ is contractible.
    \item Let $\mathbf{U}'$ be any trivialization of $g^*H^*(p_{\mathrm{flex}}^*E)$, and 
    $$\mathbf{U}(s,t, (H(s,t), x)) = \mathbf{U}'(s,t, (H(s,t), x)) - \mathbf{U}'(s,t, i(H(s,t)) ) $$
    for any point $x$ in the fiber of $p_{\mathrm{flex}}^*E$ at point $H(s,t)$. 
    Then $\mathbf{U}$ is also a trivialization of $g^*H^*(p_{\mathrm{flex}}^*E)$, and
    $$\mathbf{U}((s,t), i ( H(s,t) ) ) = \mathbf{U}'(s,t, i(H(s,t)) ) - \mathbf{U}'(s,t, i(U(s,t)) ) =  ((0,0), s,t).$$
    \item The fiber of $g^*H^*(p_{\mathrm{flex}}^*E)$ at point $(s,t)$ is $C_{F_0}$ for any $(s,t) \in ([0,1] \times \{0,1\} ) \cup (\{1\} \times [0,1])$. We have that $\{ \mathbf{U}|_{(s,t)} \}$ is a family of homeomorphisms from $C_{F_0}$ to $T^2$, which changes continuously when $(s,t)$ moves on $([0,1] \times \{0,1\} ) \cup (\{1\} \times [0,1])$. 
    \item Since $([0,1] \times \{0,1\} ) \cup (\{1\} \times [0,1])$ is homeomorphic to $[0,1]$, we can homotope $\mathbf{U}|_{([0,1] \times \{0,1\} ) \cup (\{1\} \times [0,1])}$ so that $\mathbf{U}|_{(s,t)} = \mathbf{U}|_{(0,0)}$ for any $ (s,t) \in  ([0,1] \times \{0,1\} ) \cup (\{1\} \times [0,1])$. We can extend this homotopy to the whole $\mathbf{U}$, since $([0,1] \times [0,1] )/  ([0,1] \times \{0,1\} ) \cup (\{1\} \times [0,1])$ is contractible.
    \item Since $\mathbf{U}|_{(s,0)} = \mathbf{U}|_{(s,1)}$ for any $s \in [0,1]$, the trivialization $\mathbf{U}$ gives a well-defined trivialization of $H^*(p_{\mathrm{flex}}^*E)$, which we also denote as $\mathbf{U}$.
    
\end{enumerate}

The construction of the trivialization of $H^*(p_{\mathrm{sext}}^*E)$ follows the similar procedures, where we only need to replace all the $p_{\mathrm{flex}}^*E$ by $p_{\mathrm{sext}}^*E$. 
\end{proof}

The following results from Riemannian geometry will also be useful:

\begin{lemma} \label{geodesic}
Suppose $(C_F, p)$ is a torus with a base point, equipped with its unique flat Riemannian metric compatible with its complex structure. Then:

(a). Any loop with base point $p$ is based homotopic to a unique geodesic that begins and ends at $p$. If two loops are homotopic, then they are homotopic to the same geodesic. By taking the velocity of the geodesic at time 0, we have a well-defined map $P: \pi_1(C_F, p) \rightarrow T_pC_F$.

(b). $P(- [\theta]) = - P( [\theta])$ for any $[\theta] \in \pi_1(C_F, p)$. 
\end{lemma}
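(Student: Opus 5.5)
The plan is to pass to the universal cover of the torus and reduce both claims to elementary facts about straight lines in the Euclidean plane.

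For (a), I would use the flat structure. Since $C_F$ carries a flat metric compatible with its complex structure, the uniformization $C_F \cong \mathbb{C}/L$ for a lattice $L$ is an isometry up to scaling. The universal cover $\mathbb{C} \to C_F$ can be taken to send $0$ to $p$, with $\pi_1(C_F,p)$ acting by the deck translations $z \mapsto z+\ell$, $\ell \in L$. A loop $\theta$ based at $p$ lifts to a path from $0$ to some $\ell \in L$, and $\ell$ depends only on the based homotopy class $[\theta]$.

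Geodesics in $C_F$ starting at $p$ are exactly the images of straight lines $\tau \mapsto \tau w$ in $\mathbb{C}$. A geodesic loop at $p$, parametrized on $[0,1]$, therefore lifts to the segment from $0$ to some lattice point $\ell'$, and it is based homotopic to $\theta$ if and only if $\ell' = \ell$. This follows because $\mathbb{C}$ is simply connected, so two paths from $0$ with the same endpoint are homotopic rel endpoints, and the homotopy projects down.

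This gives both existence and uniqueness of the geodesic representative: the geodesic is $\tau \mapsto \tau\ell \bmod L$. Homotopic loops have the same $\ell$ and hence the same geodesic. Its velocity at $\tau=0$ is $\ell$, viewed in $T_pC_F \cong T_0\mathbb{C} = \mathbb{C}$ via the differential of the covering map. So $P$ is well defined, and in these coordinates it is the map $\ell \mapsto \ell$.

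For (b), I would write the group $\pi_1(C_F,p)$ additively, so that $-[\theta]$ is the class of the reversed loop. Its lift starts at $0$ and ends at $-\ell$: the reversed lift runs from $\ell$ to $0$, and translating by $-\ell$ gives a path from $0$ to $-\ell$. The corresponding geodesic is $\tau \mapsto -\tau\ell$, with initial velocity $-\ell$. Hence $P(-[\theta]) = -P([\theta])$. In other words, $P$ is the composition of the identification $\pi_1(C_F,p) \cong L$ with the inclusion $L \subset \mathbb{C} \cong T_pC_F$, and in particular it is a group homomorphism.

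The only point I expect to need care is the convention of what counts as a "geodesic that begins and ends at $p$". Such a geodesic is a closed geodesic segment, not necessarily a smooth closed geodesic. I also need constant speed on $[0,1]$, so that the initial velocity is determined by the segment. This matches Step 2 in the proof of Lemma \ref{braid}, where geodesics are parametrized by $\tau \in [0,1]$. With this convention fixed, nothing else is difficult.
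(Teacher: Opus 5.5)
Your proposal is correct and follows essentially the same route as the paper. You lift to the universal cover $\mathbb{C}$, identify geodesic loops at $p$ with straight segments from the lift of $p$ to a lattice translate (the endpoint being determined by the homotopy class), and get (b) from the inverse class lifting to a path ending at the negated lattice vector; your extra care about parametrizing on $[0,1]$ with constant speed is a worthwhile clarification that the paper leaves implicit.
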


\begin{proof}
Let $\mathbb{C} \rightarrow C_F$ be the universal covering of $C_F$, and take any uplift $\tilde{p}$ of $p$. Then every loop $\theta$ in $C_F$ is uniquely lifted to a path in $\mathbb{C}$ that begins at $\tilde{p}$, and its endpoint is decided by the homotopy class of $\theta$. Equip $\mathbb{C}$ with the flat Riemannian metric, then a loop is a geodesic if and only if its lift in $\mathbb{R}^2$ is also a geodesic, in other words, a straight line. Any path in $\mathbb{C}$ is based homotopic to a unique straight line, so we proved (a). 

Now assume $\theta_1, \theta_2$ are two loops in $C_F$ with base point $p$, such that $[\theta_1] = -[\theta_2]$. Suppose the lift of $\theta_i$ has end point $\tilde{p}_i$ for $i=1,2$, then $\tilde{p}_1 - \tilde{p} = -( \tilde{p}_2 - \tilde{p})$, and $P(- [\theta]) = - P([\theta])$. 

\end{proof}

We will prove Lemma \ref{sigma_section} using Lemma \ref{torus_trivial} and \ref{geodesic}.

\begin{proof}[Proof of Lemma \ref{sigma_section}]

Let $\mathbf{U}$ be the trivialization of $H^*(p_{\mathrm{flex}}^*E)$ we have found in Lemma \ref{torus_trivial}. Take $\theta_{s,t}(\tau) = \mathbf{U}^{-1}((\tau, 0), s,t)$. Then $\{ \theta_{s,t} \}$ is a continuous (with respect to $s$ and $t$) family of loops in $H^* (p_{\mathrm{flex}}^*E)$, satisfying the following conditions:

\begin{enumerate}
    \item $\theta_{s,t}$ is a loop that lies completely in the fiber of bundle $\gamma_s^* (p_{\mathrm{flex}}^*E)$ at point $H(s,t) = \gamma_s(t)$. 
    \item $\theta_{s,t}(0) = i(H(s,t))$. Here $i: \tilde{\mathcal{X}}_{\mathrm{flex}} \rightarrow p_{\mathrm{flex}}^*E, (F,p) \mapsto ((F,p), p)$ 
is the natural immersion. 
    
\end{enumerate} 

By (a) in Lemma \ref{geodesic}, for any $(s,t) \in [0,1] \times S^1$, there is a homotopy between $\theta_{s,t}$ and a unique geodesic. Let $H_{s,t,\tau}$ be the homotopy such that $H_{s,t,0} = \theta_{s,t}$ and $H_{s,t,1} $ is a geodesic  $\theta'_{s,t}$. Then $\theta'_{s,t}$ is well-defined, and $\{ \theta'_{s,t} \}$ is a continuous family of loops. Let $\sigma_{s,t}$ be the velocity of $\theta'_{s,t}$ at point $\tau=0$, then $\sigma_{s,t}$ is well-defined, nonzero, and continuous by $s$ and $t$. Therefore, $\sigma$ is a nonvanishing section of $H^* \xi_{\mathrm{flex}}$.

\vspace{2mm}

We next show that $\sigma(0, \frac{1}{2}) = - \sigma(0,0).$ Recall that by the definition the monodromy action, if let $$\bar{\gamma}: [0,1] \rightarrow \tilde{\mathcal{X}}_{\mathrm{flex}},\ t \mapsto \begin{cases}
\gamma(t) &  t \in [0,1), \\
\gamma(0) &  t=1,
\end{cases}$$
and there is a continuous family of loops $\alpha_t$ inside $\bar{\gamma}^* (p_{\mathrm{flex}}^*E)$, satisfying that each  $\alpha_t$ lies in the fiber of $\bar{\gamma}^* (p_{\mathrm{flex}}^*E)$ at point $t$, then the homology classes of $\alpha_0$ and $\alpha_1$, denoted as $[\alpha_0]$ and $[\alpha_1]$, satisfy that $[\alpha_1] = \rho(\gamma_*) [\alpha_0]$, where $\rho$ is the monodromy representation $\pi_1(\tilde{\mathcal{X}}_{\mathrm{flex}}) \rightarrow H_1(C_{F_0}; \mathbb{Z})$. Now take $\alpha_t = \theta'_{0,\frac{1}{2}t}$. Since $\rho([\gamma]) = -I$, we have $ [ \theta'_{0,\frac{1}{2}} ]  = - [\theta'_{0,0} ] $ as homology classes. Since $\pi_1(C_F) \cong H_1(C_F ; \mathbb{Z})$, for homotopy classes we also have $[\theta'_{0,\frac{1}{2}}]  = -[\theta'_{0,0}] $. Therefore, by Lemma \ref{geodesic}, (b),  $\sigma(0, \frac{1}{2}) = - \sigma(0,0).$

\vspace{2mm}

Finally, we show that $\sigma(0, t + \frac{1}{2}) = - \sigma(0, t)$ for any $t \in S^1.$  Consider the following loop $$\tilde{\gamma}(t') = (\gamma \cdot \gamma) (t +\frac{1}{2}t'),$$ 
whose base point is not $(F_0,p_0)$, but rather $\gamma(2t)$. Nevertheless, the homotopy class of $\tilde{\gamma}$ in  $\pi_1( \tilde{\mathcal{X}}_{\mathrm{flex}}, \gamma(2t) )$, denoted  $[\tilde{\gamma}]$, is the image of $[\gamma]$ under the isomorphism $\pi_1( \tilde{\mathcal{X}}_{\mathrm{flex}}, (F_0,p_0) ) \rightarrow  \pi_1( \tilde{\mathcal{X}}_{\mathrm{flex}}, \gamma(2t) ) $, so $\tilde{\gamma}$ also have monodromy action $-I$. By the same arguments in the previous paragraph, we know that $\sigma(0, t + \frac{1}{2}) = - \sigma(0, t) $ for any $ t \in S^1.$ 

The proof for the sextatic case is similar, where we only need to replace all the $\tilde{\mathcal{X}}_{\mathrm{flex}}$ by $\tilde{\mathcal{X}}_{\mathrm{sext}}$.
\end{proof}

\vspace{2mm}

We are now ready to prove (c) and (d) in Lemma \ref{trivial_braid} and \ref{trivial_braid, only sextatic}.

\begin{proof}[Proofs of (c), Lemma \ref{trivial_braid} and \ref{trivial_braid, only sextatic}]

We prove (c) in Lemma \ref{trivial_braid} first.

Let $\mathcal{T}_0 : (\gamma \cdot \gamma)^*\xi_{\mathrm{flex}} \rightarrow T_{p_0}C_{F_0} \times S^1 $  be 

$$ ((0,t), z \cdot \sigma(0,t)) \mapsto  ((1,t), z \cdot \sigma(1,t)), z \in \mathbb{C} ,$$
and  $\mathcal{T}' : (\gamma \cdot \gamma)^*\xi_{\mathrm{flex}} \rightarrow T_{p_0}C_{F_0} \times S^1 $  be  

$$ ((0,t), z \cdot \sigma(0,t)) \mapsto  ((1,t), e^{2\pi i t} z \cdot \sigma(1,t)),   z \in \mathbb{C},$$
where $\sigma$ is the section given by Lemma \ref{sigma_section}.

By (a) in Lemma \ref{trivial_braid},  $b(\gamma \cdot \gamma, \mathcal{T}_0 ) = 1$. On the other hand, $\mathcal{T}' ((0,t), z \cdot \sigma(0,t))  = \mathcal{T}' ((0,t+\frac{1}{2}), z \cdot \sigma(0,t+\frac{1}{2}))$, so $\mathcal{T}'$ restricted to $t \in [0, \frac{1}{2}]$ is a trivialization of $\gamma^* \xi_{\mathrm{flex}}$. For convenience, we also denote it as $\mathcal{T}'$. Then we have $\mathcal{T}' \cdot \mathcal{T}' \simeq e^{2\pi i t} \mathcal{T}_0$. 

The proof of (c), Lemma \ref{trivial_braid, only sextatic} is almost the same, where we only need to replace every $ \xi_{\mathrm{flex}}$ with $\xi_{\mathrm{sext}}$. 

\end{proof}

\vspace{2mm}

\begin{proof}[Proofs of (d), Lemma \ref{trivial_braid} and Lemma \ref{trivial_braid, only sextatic}]

We prove (d) in Lemma \ref{trivial_braid} first.
Recall that in the proof of Lemma \ref{braid}, the definition of $\Omega(\gamma,\mathcal{T}')$ is
$$ \Omega(\gamma, \mathcal{T}')( t ) = \mathcal{T'} ( \gamma^* \tilde{s}_n(\tilde{\mathcal{X}}_n) )  \cap (\mathbb{C} \times \{t\} ).$$

To prove that $b(\gamma,\mathcal{T}' )$ is a pure braid, it means to prove that  $\mathcal{T}'( \gamma^* \tilde{s}_n(\tilde{\mathcal{X}}_n) )$ consists of $m$ many disjoint loops, and since $\mathcal{T}'$ is a homeomorphism, it is equivalent to prove that $\gamma^* \tilde{s}_n(\tilde{\mathcal{X}}_n) $ consists of $m$ many disjoint loops.

Recall that $\tilde{\mathcal{X}}_n$ is an $m$-sheeted covering space of $\tilde{\mathcal{X}}_{\mathrm{flex}}$, and so is $\tilde{s}_n(\tilde{\mathcal{X}}_n)$ since $\tilde{s}_n$ is injective. Therefore, $ \pi_1(\tilde{s}_n(\tilde{\mathcal{X}}_n)) \cong \pi_1( \tilde{\mathcal{X}}_n) $ is a subgroup of $ \pi_1( \tilde{\mathcal{X}}_{\mathrm{flex}} ) $. Again, let $(F_0,p_0)$ be the base point of $\tilde{\mathcal{X}}_{\mathrm{flex}}$. There are $m$ many points, $(F_0,V_1), \dots,  (F_0,V_m),$ that project to $(F_0,p_0)$ through the covering map.
For any $i=1, \dots, m $, the loop $\gamma$ can be uniquely lifted to a path in $\tilde{s}_n(\tilde{\mathcal{X}}_n)$ that starts from $(F_0,x_i)$. To prove that $ \gamma^* \tilde{s}_n(\tilde{\mathcal{X}}_n)$ consists of $m$ many loops, we only need to show that every lift of $\gamma$ into $\tilde{s}_n(\tilde{\mathcal{X}}_n)$ is also a loop.

The loop $\alpha$ which starts from $\tilde{s}_n (F_0,p_0)$ is lifted to a loop if and only if $[\alpha] \in \pi_1( \tilde{\mathcal{X}}_n )$, if and only if $\rho([\alpha]) \in \Gamma_n$. The lift of two loops $\alpha_1, \alpha_2$ has the same ending point if and only if $\alpha_1 \cdot \alpha_2^{-1} $ is lifted to a loop, if and only if $\Gamma_n \rho( [\alpha_1] )$ and $\Gamma_n \rho( [\alpha_2] )$ are the same coset of $\Gamma_n \subset \Gamma_{\mathrm{flex}}$. Therefore, each point $(F_0,V_i)$ uniquely corresponds to a coset. Because the monodromy action of the loop $\gamma$ is $-I$, the monodromy action of the loop $\gamma$ on $\{(F,V_i)| i=1,2, \dots, m\}$ maps a coset $ \Gamma_n \rho([\alpha]) $ to $ ( \Gamma_n  \rho( [\alpha] ) ) \cdot (-I)$. Since $-I$ commutes with any $\rho([\alpha])$ and $-I \in \Gamma_n$, two cosets $ \Gamma_n \rho([\alpha]) $ and $ ( \Gamma_n \rho( [\alpha] ) ) \cdot (-I)$ are the same. Therefore, any lift of $\gamma$ into $\tilde{s}_n(\tilde{\mathcal{X}}_n )$ is a loop. This shows that $b(\gamma,\mathcal{T}' )$ is a pure braid.

The proof of (d) in Lemma \ref{trivial_braid, only sextatic} is almost the same, where we only need to replace all $\tilde{\mathcal{X}}_{\mathrm{flex}}$ and $\gamma_{\mathrm{flex}}$ with $\tilde{\mathcal{X}}_{\mathrm{sext}}$ and $\gamma_{\mathrm{sext}}$ respectively.

\end{proof}

Now that we have proved Lemma \ref{trivial_braid} and \ref{trivial_braid, only sextatic}, we have completed the proof of Proposition \ref{m=1}.

\section{Theorems on nonconnected virtual sections}

In this section, we will extend Proposition \ref{m=1} to the general case, where we do not require virtual sections to be connected. We will show that:

\begin{proposition} \label{one_copy}
Suppose $\tilde{\mathcal{X}}_n$ is a possibly nonconnected virtual section. $\tilde{\mathcal{X}}_n$ cannot contain more than one connected component that is homotopic to  $\tilde{\mathcal{X}}_{\mathrm{flex}}$, or more than one connected component that is homotopic to  $\tilde{\mathcal{X}}_{\mathrm{sext}}$.
\end{proposition}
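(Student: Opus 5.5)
The plan is to reduce to the braid argument of Section 4, applied to the union of the relevant components rather than to a single component. Suppose $\tilde{\mathcal{X}}_n$ contains two connected components $Y_1,Y_2$, each homotopic to $\tilde{\mathcal{X}}_{\mathrm{flex}}$; the sextatic case is identical with $\mathrm{flex}$ replaced by $\mathrm{sext}$. Let $Y=Y_1\sqcup Y_2$ with the restricted maps $p$ and $s$. Then $(Y,p,s)$ is an injective virtual section of degree $18$ (respectively $54$).

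Each $Y_j$ is homotopic to $\tilde{\mathcal{X}}_{\mathrm{flex}}$, so there are isomorphisms of covers $f_j : Y_j \to \tilde{\mathcal{X}}_{\mathrm{flex}}$ and homotopies from $s|_{Y_j}$ to $s_{\mathrm{flex}}\circ f_j$. Together these give a $2$-sheeted trivial covering $f : Y \to \tilde{\mathcal{X}}_{\mathrm{flex}}$ with $p_{\mathrm{flex}}\circ f = p$, and a homotopy from $s$ to $s_{\mathrm{flex}}\circ f$. This is exactly the input used in Lemma \ref{braid}. Connectedness of $\tilde{\mathcal{X}}_n$ was only used there to produce $f$ and the homotopy through Proposition \ref{homotopic_sectionmap}.

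I will then rerun Steps 1--4 of the proof of Lemma \ref{braid}. Over each $(F,p)$ the two points of $f^{-1}(F,p)$ are sent by $s$ to distinct points $x_1,x_2$ of $C_F$. The homotopy gives paths from $p$ to each $x_i$, and geodesic straightening gives tangent vectors $V_1,V_2 \in T_pC_F$. These vectors are distinct because they encode distinct endpoints $x_1\neq x_2$: the geodesic from $p$ determined by $V_i$ ends at $x_i$, and the $x_i$ differ. This yields braids $b(\gamma,\mathcal{T}) \in Br_2$ with properties (1)--(3). Lemma \ref{trivial_braid} (a)--(c) goes through verbatim, since those parts only concern $\tilde{\mathcal{X}}_{\mathrm{flex}}$ and trivializations. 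Hence $b(\gamma,\mathcal{T}')^2=\Delta^2$ in $Br_2$, and by \cite{Braid_root} $b(\gamma,\mathcal{T}')$ is conjugate to $\Delta=\sigma_1$, which is not pure.

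The main obstacle is (d): showing that $b(\gamma,\mathcal{T}')$ is pure, i.e.\ that the lift of $\gamma$ starting at a point of $Y_1$ closes up. This is actually easier than in the connected case. Because $f$ restricted to each $Y_j$ is a homeomorphism onto $\tilde{\mathcal{X}}_{\mathrm{flex}}$, every loop in $\tilde{\mathcal{X}}_{\mathrm{flex}}$, in particular $\gamma$, lifts to a loop in each $Y_j$. So the two strands return to themselves and the braid is pure, which contradicts the previous paragraph.

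The step I would check most carefully is whether $V_1 \ne V_2$ really holds when the two homotopies are chosen independently on $Y_1$ and $Y_2$. Distinct endpoints $x_1\neq x_2$ force distinct straightened geodesics, so this should be fine. If instead distinctness were needed along the whole homotopy, I would only use the endpoint data, which is all that Step 2 requires.
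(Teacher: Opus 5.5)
Your argument is correct, but it reaches the contradiction by a different route than the paper.

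The paper also passes to the union of the two components and obtains the distinct vectors $V_1\neq V_2$. It then forms $V_1-V_2$ and restricts it to one component. This gives a nonvanishing section of $\xi_{\mathrm{flex}}$ (resp.\ $\xi_{\mathrm{sext}}$), which contradicts $e(\xi_{\mathrm{flex}})\neq 0$ (Banerjee--Chen) or $e(\xi_{\mathrm{sext}})\neq 0$. The latter needs all of Subsection 5.2: factoring the classifying map through $B\Gamma_{\mathrm{sext}}$, the Serre spectral sequence for injectivity of $P_{\mathrm{sext}}^*$, surjectivity of $\iota^*$, and a transfer argument.

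You instead rerun the Section 4 braid machinery on the degree $18$ (resp.\ $54$) union. You correctly note that connectedness entered Lemma \ref{braid} only through the existence of $f$ and the homotopy. You also note that parts (a)--(c) of Lemma \ref{trivial_braid} use only based-homotopy invariance and the trivializations. Since the double cover $Y\to\tilde{\mathcal{X}}_{\mathrm{flex}}$ is trivial, purity is immediate. In $Br_2\cong\mathbb{Z}$ the relation $b^2=\Delta^2=\sigma_1^2$ forces $b=\sigma_1$, which is not pure. Here $Br_2$ is torsion-free, so you do not need the root-uniqueness theorem. Your justification of $V_1\neq V_2$ via $\exp_p(V_i)=x_i$ is the right one.

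What your route buys is uniformity: both cases are handled identically and the Euler class computations become unnecessary for this proposition. In effect you show directly that $\xi_{\mathrm{flex}}$ and $\xi_{\mathrm{sext}}$ admit no nonvanishing section. To see this, apply the same argument to $\{W,-W\}$ for a putative nonvanishing section $W$. This uses only one loop $\gamma$ with $\rho(\gamma)=-I$ and a nullhomotopy of $\gamma\cdot\gamma$.

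The paper's route is more conceptual and avoids the trivialization bookkeeping of Section 4. It also yields extra information, for instance that $e(\xi_{\mathrm{sext}})$ has order $4$ or $12$.
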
 

The idea of the proof of Proposition \ref{one_copy} is as follows: If there are two connected components, both of which are homotopic to $\tilde{\mathcal{X}}_{\mathrm{flex}}$ or $\tilde{\mathcal{X}}_{\mathrm{sext}}$, then we can construct a nonvanishing section of $\xi_{\mathrm{flex}}$ or $\xi_{\mathrm{sext}}$. The existence of such a section implies that the Euler class of $\xi_{\mathrm{flex}}$ or $\xi_{\mathrm{sext}}$ is zero, yet we can compute that neither $\xi_{\mathrm{flex}}$ nor $\xi_{\mathrm{sext}}$ has zero Euler class, which leads to a contradiction. This method was first used in \cite{B_Chen}. 

In 5.1, we explain how to turn Proposition \ref{one_copy} into a problem of calculating the Euler class of $\xi_{\mathrm{flex}}$ and $\xi_{\mathrm{sext}}$. In 5.2, we show that the Euler class of $\xi_{\mathrm{flex}}$ and $\xi_{\mathrm{sext}}$ are nonzero.

\subsection{Proof of Proposition \ref{one_copy}}

We will prove Proposition \ref{one_copy} using a method that Banerjee-Chen used in \cite{B_Chen} to prove that  virtual sections of degree 18 do not exist. Before the proof of Proposition \ref{one_copy},  we need the following propositions:

\begin{proposition}[Proposition 5.4, \cite{B_Chen}] \label{nonzero_euler_class_flex}
Denote the Euler class of $\xi_{\mathrm{flex}}$ by $e(\xi_{\mathrm{flex}})$. Then $e(\xi_{\mathrm{flex}}) \neq 0$.
\end{proposition}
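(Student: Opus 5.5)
The plan is to find a single loop over which $\xi_{\mathrm{flex}}$ is forced to be twisted, and to show that this twisting is detected by $e(\xi_{\mathrm{flex}})$. Work rationally. Since $\pi_1(\tilde{\mathcal{X}}_{\mathrm{flex}}) \cong \mathrm{SL}_2(\mathbb{Z}) \times Z(K)$, and $\mathrm{SL}_2(\mathbb{Z})$ has finite abelianization $\mathbb{Z}/12$, we get $H_1(\tilde{\mathcal{X}}_{\mathrm{flex}};\mathbb{Q}) = 0$. So the first Chern class cannot be detected through degree-one data. It is better to work with torsion: $c_1(\xi_{\mathrm{flex}}) \in H^2(\tilde{\mathcal{X}}_{\mathrm{flex}};\mathbb{Z})$ may be a torsion class.

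The torsion of $H^2(\,\cdot\,;\mathbb{Z})$ is $\mathrm{Ext}(H_1,\mathbb{Z})$, and a torsion line bundle is classified by its holonomy character $\pi_1 \to U(1)$, whose image lands in roots of unity. The concrete target is therefore: show that $\xi_{\mathrm{flex}}$ admits no nonvanishing section by exhibiting the element $-I \in \Gamma_{\mathrm{flex}}$ as an obstruction. This reuses the mechanism of Lemma~\ref{sigma_section} and Lemma~\ref{trivial_braid}.

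The key steps, in order, are as follows.

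\textbf{Step 1.} Suppose, for contradiction, that $\xi_{\mathrm{flex}}$ has a nonvanishing section $\tau$. Let $\gamma$ be the loop from Lemma~\ref{trivial_braid}(b), with monodromy $-I$, and let $H$ be a nullhomotopy of $\gamma\cdot\gamma$.

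\textbf{Step 2.} By Lemma~\ref{sigma_section}, there is a nonvanishing section $\sigma$ of $H^*\xi_{\mathrm{flex}}$ with $\sigma(0,t+\tfrac12) = -\sigma(0,t)$. Compare $\sigma$ with $H^*\tau$ through the ratio $w = H^*\tau/\sigma \colon [0,1]\times S^1 \to \mathbb{C}^*$. Its winding number along $\{0\}\times S^1$ equals its winding number along $\{1\}\times S^1$. The latter vanishes, because both sections are constant-ish over the constant loop: $\sigma(1,\cdot)$ is constant by construction, and $\tau$ restricted to a constant loop is a constant vector.

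\textbf{Step 3.} On $\{0\}\times S^1$, the section $H^*\tau$ is pulled back along $\gamma\cdot\gamma$, so it is $\tfrac12$-periodic. Meanwhile $\sigma$ is $\tfrac12$-antiperiodic. Hence $w(0,t+\tfrac12) = -w(0,t)$, which forces the winding number of $w$ along $\{0\}\times S^1$ to be odd, in particular nonzero. This contradicts Step 2, so $\xi_{\mathrm{flex}}$ has no nonvanishing section and $e(\xi_{\mathrm{flex}}) \neq 0$.

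The main obstacle is the winding-number bookkeeping in Steps 2 and 3. The homotopy $H$ is based, and $\sigma(s,0)$ is constant in $s$ only after the normalization of Lemma~4.6. That normalization was not built into Lemma~\ref{sigma_section}, so I must check that one can normalize $\sigma$ along the edges $([0,1]\times\{0\}) \cup (\{1\}\times S^1)$ without destroying the antiperiodicity on $\{0\}\times S^1$. I would try first to renormalize by a complex function $Z$ that is $\tfrac12$-periodic on the bottom edge. If that fails, I would instead compute directly on the annulus that the total winding is the obstruction in $H^2([0,1]\times S^1, \partial;\mathbb{Z})$ pulled back from $H$.

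For a sanity check, note that this argument is equivalent to the fact that $-I$, the hyperelliptic involution at a flex, acts on $T_pC_F$ by $-1$. Hence the holonomy character of $\xi_{\mathrm{flex}}$ is nontrivial on $\pi_1$, so $\xi_{\mathrm{flex}}$ is a nontrivial flat torsion bundle and $e(\xi_{\mathrm{flex}}) \neq 0$ as a torsion class.
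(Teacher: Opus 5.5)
Your argument is correct, and it takes a genuinely different route from the one behind this statement. The paper cites \cite{B_Chen} for it. The proof there, which the paper mirrors for $\xi_{\mathrm{sext}}$ in Proposition~\ref{nonzero_euler_class}, is global and cohomological:
\begin{itemize}
\item it factors the classifying map of $\xi_{\mathrm{flex}}$ up to homotopy as $\tilde{\mathcal{X}}_{\mathrm{flex}} \xrightarrow{P} B\mathrm{SL}_2(\mathbb{Z}) \xrightarrow{\iota} B\mathrm{SL}_2(\mathbb{R})$;
\item it shows $P^*$ is injective on $H^2$ by a Serre spectral sequence for the cover with fundamental group $Z(K)$;
\item it uses that $\iota^*$ hits a generator of $H^2(B\mathrm{SL}_2(\mathbb{Z});\mathbb{Z})\cong\mathbb{Z}/12$.
\end{itemize}
Hence $e(\xi_{\mathrm{flex}})$ has order exactly $12$.

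You instead test $\xi_{\mathrm{flex}}$ on a single $2$-complex. Collapsing $\{1\}\times S^1$ and identifying $t\sim t+\tfrac12$ on $\{0\}\times S^1$ turns the annulus into $\mathbb{RP}^2$. The nullhomotopy $H$ then descends to a map $\mathbb{RP}^2\to\tilde{\mathcal{X}}_{\mathrm{flex}}$ realizing $[\gamma]$ on $\pi_1$. Your parity-of-winding computation amounts to showing that the pullback of $\xi_{\mathrm{flex}}$ along this map has $c_1$ equal to the generator of $H^2(\mathbb{RP}^2;\mathbb{Z})\cong\mathbb{Z}/2$.

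Your route has three advantages:
\begin{itemize}
\item It is elementary and self-contained given Lemma~\ref{trivial_braid}(b) and Lemma~\ref{sigma_section}: no classifying spaces, no spectral sequence, no knowledge of $H^2(B\mathrm{SL}_2(\mathbb{Z}))$.
\item It proves directly the ``no nonvanishing section'' form that Proposition~\ref{one_copy} actually uses.
\item Both lemmas have sextatic versions, so the same argument proves Proposition~\ref{nonzero_euler_class} verbatim, with no transfer.
\end{itemize}
What it gives up is quantitative information. You only detect the image of $e$ in $H^2(\mathbb{RP}^2;\mathbb{Z})$, not that its order is $12$ (resp.\ $4$ or $12$ for $\xi_{\mathrm{sext}}$).

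The obstacle you flag at the end is not one. Equality of the winding numbers of $w(0,\cdot)$ and $w(1,\cdot)$ is just homotopy invariance of degree for $w\colon[0,1]\times S^1\to\mathbb{C}^*$. It needs no control of $\sigma$ along $[0,1]\times\{0\}$; that normalization matters only for based homotopies of trivializations in the braid argument. Also, $\sigma(1,\cdot)$ is constant even without the normalization of Lemma~\ref{torus_trivial}. It is a continuous family of period vectors of the fixed curve $C_{F_0}$, i.e.\ of points of a discrete lattice.

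Your closing sanity check should not be read as an independent proof. The natural flat structure on $\xi_{\mathrm{flex}}$ has $\mathrm{SL}_2(\mathbb{Z})$ monodromy, not a $U(1)$ holonomy character, and nontrivial monodromy alone does not force a nonzero Euler class. What makes the argument work is that $-I$ is realized by an element of order $2$ in $\pi_1(\tilde{\mathcal{X}}_{\mathrm{flex}})$, which is exactly what Steps 1--3 use.
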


\begin{proposition} \label{nonzero_euler_class}
Denote the Euler class of $\xi_{\mathrm{sext}}$ by $e(\xi_{\mathrm{sext}})$. Then  $e(\xi_{\mathrm{sext}}) \neq 0$.  
\end{proposition}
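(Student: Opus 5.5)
The plan is to reduce to the flex case, Proposition \ref{nonzero_euler_class_flex}, through the covering map $f:\tilde{\mathcal{X}}_\mathrm{sext}\to\tilde{\mathcal{X}}_\mathrm{flex}$ of Proposition \ref{covering_map}. We may use it because $-I\in\Gamma_\mathrm{sext}$. The key point is that the vertical tangent bundle $T_\pi E$ of $\pi:E\to\mathcal{X}$ is canonically trivial along each fiber. On a genus one curve the tangent bundle is trivial, and translation by the group structure identifies $T_pC_F$ with $T_qC_F$ canonically. This identification is independent of the chosen origin and is invariant under the full monodromy, since automorphisms act by $\pm1$ or roots of unity on $H^0(K_{C_F})$. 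More precisely, I will use the line bundle $\mathcal{L}$ over $\mathcal{X}$ whose fiber at $F$ is $H^0(C_F,T C_F)=H^0(K_{C_F})^\vee$. Then $T_\pi E\cong \pi^*\mathcal{L}$, and hence
\[
\xi_\mathrm{flex}=s_\mathrm{flex}^*T_\pi E\cong p_\mathrm{flex}^*\mathcal{L},\qquad \xi_\mathrm{sext}=s_\mathrm{sext}^*T_\pi E\cong p_\mathrm{sext}^*\mathcal{L}=f^*p_\mathrm{flex}^*\mathcal{L}\cong f^*\xi_\mathrm{flex}.
\]
It follows that $e(\xi_\mathrm{sext})=f^*e(\xi_\mathrm{flex})$.

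The remaining task is to show that $f^*:H^2(\tilde{\mathcal{X}}_\mathrm{flex};\mathbb{Z})\to H^2(\tilde{\mathcal{X}}_\mathrm{sext};\mathbb{Z})$ does not kill $e(\xi_\mathrm{flex})$. The standard transfer argument gives $\mathrm{tr}\circ f^*=\deg f=3$. Therefore, if $e(\xi_\mathrm{flex})$ is nontorsion, or has order prime to $3$, we are done. So the step I expect to be hardest is to identify the order of $e(\xi_\mathrm{flex})$, and to recall from the proof of Proposition 5.4 in \cite{B_Chen} how nonvanishing was established. My recollection is that they use a $\mathbb{C}^*$-orbit, or a specific family, detecting $e$ through a degree computation: $\mathcal{L}$ is a root of the line bundle $\mathcal{O}(1)$ coming from the scaling $F\mapsto\lambda F$, and $\mathcal{X}$ is homotopic to $(\mathrm{SL}_3\text{-orbit})$ structures.

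Concretely, I would follow these steps.
\begin{enumerate}
\item Take the loop $\lambda\mapsto e^{2\pi i\lambda}F$, or more generally the fibers of $\mathrm{GL}_3(\mathbb{C})$ acting on $\mathcal{X}$, and compute $e$ evaluated on classes in $H_2$. Alternatively, compute via the rationally known $H^*(\mathcal{X};\mathbb{Q})$ and the residue formula for the holomorphic differential $\omega_F=\mathrm{Res}\,\frac{\Omega}{F}$. This section of $K$ has weight $-1$ under scaling of $F$ and weight $\det$ under $\mathrm{GL}_3$, so $\mathcal{L}$ is the line bundle associated to a nontrivial character.
\item Show that the class of this character in $H^2$ of the finite cover $\tilde{\mathcal{X}}_\mathrm{sext}$ is still nonzero. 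I would do this by exhibiting an explicit torus $T^2\to\tilde{\mathcal{X}}_\mathrm{sext}$, or a map from a compact surface, on which $\langle e(\xi_\mathrm{sext}),[\Sigma]\rangle\neq0$.
\end{enumerate}
The natural choice is a lift of the class used by Banerjee--Chen to $\tilde{\mathcal{X}}_\mathrm{sext}$. A cycle in $\tilde{\mathcal{X}}_\mathrm{flex}$ pushes forward from $\tilde{\mathcal{X}}_\mathrm{sext}$ at least as $3$ times itself under $f_*\circ(\text{transfer})$, which gives
\[
\langle e(\xi_\mathrm{sext}),\mathrm{tr}[\Sigma]\rangle=\langle e(\xi_\mathrm{flex}),f_*\mathrm{tr}[\Sigma]\rangle=3\langle e(\xi_\mathrm{flex}),[\Sigma]\rangle\neq0.
\]
Here I will use the \emph{homology} transfer. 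This pairing argument works as long as Banerjee--Chen detected $e(\xi_\mathrm{flex})$ by pairing with an integral homology class with nonzero result in $\mathbb{Z}$, rather than as a torsion class.

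If instead their class was torsion, with order $3$ being the bad case, I would argue by contradiction in the way they used it. If $e(\xi_\mathrm{sext})=0$, there is a nonvanishing section of $\xi_\mathrm{sext}$. Pushing it forward, for instance by taking the product over the three sheets of $f$ in the line bundle $\xi_\mathrm{flex}^{\otimes3}$, gives $3e(\xi_\mathrm{flex})=0$. I would then check against their computed order of $e(\xi_\mathrm{flex})$ that this is impossible. I expect their class to have order divisible by a prime other than $3$, or to be nontorsion, which would settle the statement.
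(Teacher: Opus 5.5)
Your reduction is correct. Evaluating global vector fields gives a canonical isomorphism $T_\pi E\cong\pi^*\mathcal{L}$; no monodromy argument is needed for this. So $\xi_{\mathrm{sext}}\cong p_{\mathrm{sext}}^*\mathcal{L}\cong f^*\xi_{\mathrm{flex}}$, and the transfer (or your norm map into $\xi_{\mathrm{flex}}^{\otimes 3}$) shows that $e(\xi_{\mathrm{sext}})=0$ would force $3e(\xi_{\mathrm{flex}})=0$.

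There is a genuine gap, though: the argument stops exactly where the content lies. You need $3e(\xi_{\mathrm{flex}})\neq 0$. Proposition \ref{nonzero_euler_class_flex} as stated gives only $e(\xi_{\mathrm{flex}})\neq 0$, which does not rule out order $3$, and you leave the order as an expectation. Moreover, your main plan for determining it cannot work. The class $e(\xi_{\mathrm{flex}})$ is pulled back along the classifying map from $H^2(B\mathrm{SL}_2(\mathbb{Z});\mathbb{Z})\cong\mathbb{Z}/12\mathbb{Z}$, so it is torsion. It therefore vanishes rationally and pairs to $0$ with every integral $2$-cycle. Your displayed identity $\langle e(\xi_{\mathrm{sext}}),\mathrm{tr}[\Sigma]\rangle=3\langle e(\xi_{\mathrm{flex}}),[\Sigma]\rangle$ reads $0=0$ for every $\Sigma$. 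No evaluation on integral $H_2$ classes, whether from surfaces or from $\mathbb{C}^*$- or $\mathrm{GL}_3$-orbits, can detect it.

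The missing input is the exact order of $e(\xi_{\mathrm{flex}})$, and it comes from the same classifying-space argument the paper uses. One has $e(\xi_{\mathrm{flex}})=P_{\mathrm{flex}}^*\iota^*(e)$, where $\iota^*(e)$ generates $\mathbb{Z}/12\mathbb{Z}$. The map $P_{\mathrm{flex}}^*$ is injective on $H^2$ by the Serre spectral sequence of $Y\to\tilde{\mathcal{X}}_{\mathrm{flex}}\to B\mathrm{SL}_2(\mathbb{Z})$, since $H^1(Y;\mathbb{Z})=\mathrm{Hom}(\mathbb{Z}/3\mathbb{Z},\mathbb{Z})=0$. So the order is exactly $12$, hence $3e(\xi_{\mathrm{flex}})\neq 0$, and your argument closes. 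It even gives order $4$ or $12$ for $e(\xi_{\mathrm{sext}})$, as in the paper.

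With this supplied, your route is a legitimate variant of the paper's:
\begin{itemize}
\item The paper transfers along the index-$3$ map $B\Gamma_{\mathrm{sext}}\to B\mathrm{SL}_2(\mathbb{Z})$, and so must reprove injectivity of $P_{\mathrm{sext}}^*$.
\item You transfer along $f$ and reuse the flex computation, at the cost of the identification $\xi_{\mathrm{sext}}\cong f^*\xi_{\mathrm{flex}}$.
\end{itemize}

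Your $\mathcal{O}(1)$ remark can also be made to work, but through torsion rather than pairings. The residue form $\omega_F$ identifies $\mathcal{L}\cong\mathcal{O}(-1)|_{\mathcal{X}}$. This class has order exactly $12$ in $H^2(\mathcal{X};\mathbb{Z})$, because the discriminant is an irreducible hypersurface of degree $12$. Transfer along the $27$-sheeted cover $p_{\mathrm{sext}}$ then gives $e(\xi_{\mathrm{sext}})\neq 0$, since $12\nmid 27$.
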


Proposition \ref{nonzero_euler_class}  will be proved in Subsection 5.2. Let us first use Proposition \ref{nonzero_euler_class_flex} and \ref{nonzero_euler_class}  to prove Proposition \ref{one_copy}.

\begin{proof}[Proof of Proposition \ref{one_copy}]

Assume that there exist two components in $\tilde{\mathcal{X}}_n$ that are both homotopic to $\tilde{\mathcal{X}}_{\mathrm{flex}}$. Let $\tilde{\mathcal{X}}_{18}$ be the disjoint union of the two components, $f_{18} = f_n |_{\tilde{\mathcal{X}}_{18}} $, and $s_{18} = s_n |_{\tilde{\mathcal{X}}_{18}}$. Let $f: \tilde{\mathcal{X}}_{18} \rightarrow \tilde{\mathcal{X}}_{\mathrm{flex}}$ be the covering map. Then $s_{18}$ restricted to each component is homotopic to $s_{\mathrm{flex}}$, and in general $s_{18}$ is homotopic to $s_{\mathrm{flex}} \circ f$.

For each point $(F,p) \in \tilde{\mathcal{X}}_\mathrm{flex}$, the preimage $f^{-1}(F,p)$ is a set of 2 distinct points $X_1$ and $X_2$. Since $s_{18}$ is injective, $s_{18}$ maps $X_1$ and $X_2$ to two distinct points in $E$, denoted by $(F,x_1) $ and $ (F,x_2)$.  We have $(s_\mathrm{flex} \circ f)(X_i) = (F,p)$, and $s_n(X_i) =(F, x_i) \in C_F$. Let $s_t$ be the homotopy such that $s_0 = s_\mathrm{flex} \circ f$ and $s_1 = s_{18}$. Then $\gamma_i(t) = s_t(F,x_i)$ defines a path from $p$ to $x_i$ inside $C_F$.

Equip $C_F$ with its unique flat Riemannian metric compatible with its complex structure, then every path $\gamma_i$ is homotopic to a unique geodesic segment $\theta_i(t)$ with $t \in [0,1]$. Take 
$$V_i = \frac{\mathrm{d}}{\mathrm{d}\tau} \theta_i(\tau) |_{\tau=0}, i=1,2, $$
then $V_1 \neq V_2$. Define $\tilde{s}_{18} : \tilde{\mathcal{X}}_{18} \rightarrow \xi_\mathrm{flex}$ by $\tilde{s}_n(X_i)= (F, V_i)$, and

$$V' : \mathcal{\tilde{\mathcal{X}}}_{18} \rightarrow f^*\xi_\mathrm{flex}, \ \ X_1 \mapsto V_1 - V_2, \, X_2 \mapsto V_2 - V_1,  $$
then $V'$ is a nonvanishing section of $f^*\xi_\mathrm{flex}$.
Because $\tilde{\mathcal{X}}_{18}$ is a disjoint union of two connected components, we can restrict $V'$ to one component and get a section over $\xi_\mathrm{flex}$. Therefore, the Euler class of $\xi_\mathrm{flex}$ must be zero. However, by Proposition \ref{nonzero_euler_class_flex} we have $e(\xi_{\mathrm{flex}}) \neq 0$, so any virtual section cannot contain two components homotopic to $\tilde{\mathcal{X}}_{\mathrm{flex}}$.

With similar arguments, we can prove that any virtual section cannot contain two components homotopic to $\tilde{\mathcal{X}}_{\mathrm{sext}}$, using $e(\xi_{\mathrm{sext}}) \neq 0$, which is Proposition \ref{nonzero_euler_class}. We only need to replace $\tilde{\mathcal{X}}_{18}$ with $\tilde{\mathcal{X}}_{54}$, a disjoint union of two components homotopic to $\tilde{\mathcal{X}}_{\mathrm{sext}}$, and then replace $p_{18}, s_{18}, \tilde{s}_{18}, \tilde{\mathcal{X}}_{\mathrm{flex}}, \xi_{\mathrm{flex}}$ with $p_{54}, s_{54}, \tilde{s}_{54}, \tilde{\mathcal{X}}_{\mathrm{sext}}, \xi_{\mathrm{sext}}$ respectively.

\end{proof}

In the next section, we will calculate the Euler class of $\xi_\mathrm{sext}$.

\subsection{Proof of  Proposition \ref{nonzero_euler_class}}

We will calculate $\xi_\mathrm{sext}$ using the similar method that Banerjee-Chen calculated $\xi_\mathrm{flex}$ in \cite{B_Chen}.

\vspace{2mm}

Since $\xi_\mathrm{sext}$ is an oriented bundle with real dimension 2, it is a pullback of the universal bundle over $B\mathrm{SL}_2 (\mathbb{R})$ by a classifying map $\phi_\mathrm{sext}$. The universal Euler class is a generator $e$ of $H^2(B \mathrm{SL}_2(\mathbb{R}) ; \mathbb{Z})$, and $e(\xi_{\mathrm{sext}}) = \phi_\mathrm{sext}^* (e)$. To prove $e(\xi_{\mathrm{sext}}) \neq 0$, we will prove a sequence of lemmas that investigate the behavior of $\phi_\mathrm{sext}$.

\begin{lemma} \label{can_decompose}
The classifying map $\phi_\mathrm{sext}$ is homotopic to $\iota \circ i \circ P_{\mathrm{sext}}$, where:

$P_{\mathrm{sext}}:  \tilde{\mathcal{X}}_{\mathrm{sext}} \rightarrow B \Gamma_{\mathrm{sext}}$ satisfies that $(P_{\mathrm{sext}})_* : \pi_1( \tilde{\mathcal{X}}_{\mathrm{sext}}) \rightarrow \Gamma_{\mathrm{sext}}$ coincides with the monodromy representation $\rho: \pi_1(\tilde{\mathcal{X}}_{\mathrm{sext}}) \rightarrow Aut \, H_1(T^2 ; \mathbb{Z}) $;

$i : B\Gamma_{\mathrm{sext}} \rightarrow B \mathrm{SL}_2(\mathbb{Z})$ is induced by the inclusion $\Gamma_{\mathrm{sext}} \rightarrow \mathrm{SL}_2(\mathbb{Z})$;

$\iota : B \mathrm{SL}_2(\mathbb{Z}) \rightarrow B \mathrm{SL}_2(\mathbb{R})$ is induced by the standard inclusion $\mathrm{SL}_2(\mathbb{Z}) \rightarrow  \mathrm{SL}_2(\mathbb{R})$.

\end{lemma}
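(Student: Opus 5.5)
The approach is to show that $\xi_{\mathrm{sext}}$, viewed as a real oriented plane bundle, is isomorphic to a flat $\mathrm{SL}_2(\mathbb{R})$-bundle. Its holonomy should be the monodromy representation $\rho$ composed with $\mathrm{SL}_2(\mathbb{Z})\subset \mathrm{SL}_2(\mathbb{R})$. A flat bundle with holonomy $\psi:\pi_1\to G$ (with $G$ given the discrete topology) has classifying map $B\psi$ into $BG^\delta$, composed with $BG^\delta\to BG$. Since $\psi=\iota\circ i\circ\rho$ factors through $\Gamma_{\mathrm{sext}}$, the classifying map factors as $\iota\circ i\circ P_{\mathrm{sext}}$. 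Here $P_{\mathrm{sext}}$ is the map to the $K(\Gamma_{\mathrm{sext}},1)$ inducing $\rho$ on $\pi_1$, which exists and is unique up to homotopy because $B\Gamma_{\mathrm{sext}}$ is aspherical.

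The key step is to identify $T_qC_F$ with $H_1(C_F;\mathbb{R})$ canonically and continuously in $(F,q)$. We use the construction from Lemma \ref{geodesic}. Equip $C_F$ with its flat metric. Each class in $\pi_1(C_F,q)=H_1(C_F;\mathbb{Z})$ is represented by a unique closed geodesic through $q$, and we send the class to its initial velocity. This map $P:H_1(C_F;\mathbb{Z})\to T_qC_F$ is additive: lifting to the universal cover $\mathbb{C}$, it is just the period map $\lambda\mapsto\lambda$ into the lattice, up to the identification $T_q\mathbb{C}=\mathbb{C}$. It therefore extends to an $\mathbb{R}$-linear isomorphism $H_1(C_F;\mathbb{R})\to T_qC_F$, which is exactly the period isomorphism $\mathbb{C}/\Lambda_F\cong C_F$. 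The isomorphism depends continuously on $(F,q)$, because the flat metric and the periods vary continuously. Hence, as real vector bundles over $\tilde{\mathcal{X}}_{\mathrm{sext}}$, $\xi_{\mathrm{sext}}\cong p_{\mathrm{sext}}^*\mathcal{H}_\mathbb{R}$, where $\mathcal{H}_\mathbb{R}$ is the bundle with fibers $H_1(C_F;\mathbb{R})$. The bundle $\mathcal{H}_\mathbb{R}$ is flat, with a lattice $H_1(C_F;\mathbb{Z})$ preserved by parallel transport, and its holonomy is by definition $\rho$.

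Next we check orientations, so that the structure group really lies in $\mathrm{SL}_2$ and the two Euler classes agree with the right sign. The complex orientation on $T_qC_F$ corresponds, under the period isomorphism, to the orientation on $H_1(C_F;\mathbb{R})$ given by the intersection form. This orientation is preserved by monodromy, since $\rho$ lands in $\mathrm{SL}_2(\mathbb{Z})$. Therefore $\xi_{\mathrm{sext}}$ is isomorphic, as an oriented bundle, to the flat bundle $\widetilde{\tilde{\mathcal{X}}}_{\mathrm{sext}}\times_{\pi_1}\mathbb{R}^2$ associated to $\iota\circ i\circ\rho$. The standard fact that a flat bundle's classifying map factors through $B$ of its holonomy then gives the result.

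The main obstacle is the continuity and linearity claim for the geodesic-velocity identification. Continuity in $(F,q)$ is essentially the content already used in the proof of Lemma \ref{sigma_section}, so I would lean on that argument. Linearity needs the observation that geodesics from $q$ in the flat torus lift to straight segments, so velocity equals displacement. A secondary point is that $Z(K)$ acts trivially. This holds because $Z(K)\subset\ker\rho$, which is consistent with the factorization through $\Gamma_{\mathrm{sext}}$ rather than $\pi_1(\tilde{\mathcal{X}}_{\mathrm{sext}})$.
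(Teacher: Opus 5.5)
Your proof is correct, but it takes a different route from the paper.

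The paper regards $\xi_{\mathrm{sext}}$ as the vertical tangent bundle along the sextatic section of the pointed torus bundle $p_{\mathrm{sext}}^*E$. Its classifying map therefore factors through $B\mathrm{Diffeo}^+(T^2,q)$ via ``derivative at the marked point''. The paper then uses Earle--Eells (contractibility of $\mathrm{Diffeo}_0^+(T^2,q)$) and $\pi_0\,\mathrm{Diffeo}^+(T^2,q)\cong \mathrm{SL}_2(\mathbb{Z})$ to replace $B\mathrm{Diffeo}^+(T^2,q)$ by $B\mathrm{SL}_2(\mathbb{Z})$. Finally it checks that on linear representatives the derivative map is the standard inclusion.

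You instead use the translation-invariant (holomorphic) structure of $C_F$ to build an actual isomorphism of oriented plane bundles $\xi_{\mathrm{sext}}\cong p_{\mathrm{sext}}^*\mathcal{H}_{\mathbb{R}}$, a flat bundle with integral holonomy $\rho$. You then quote the standard factorization of classifying maps of flat bundles through $B$ of the holonomy group.

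The paper's argument is purely topological and would apply to any smooth oriented torus bundle with a section, at the cost of the Earle--Eells theorem. Yours avoids that input entirely and proves more: $\xi_{\mathrm{sext}}$ is genuinely flat. Since the same identification works at inflection points, you also get $\xi_{\mathrm{sext}}\cong f^*\xi_{\mathrm{flex}}$ for the covering $f:\tilde{\mathcal{X}}_{\mathrm{sext}}\to\tilde{\mathcal{X}}_{\mathrm{flex}}$, both being pulled back from $\mathcal{H}_{\mathbb{R}}$ on $\mathcal{X}$.

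For the continuity step you flag as the main obstacle, there is a cleaner justification than citing the proof of Lemma \ref{sigma_section}. Use the formula $P(\lambda)=\omega_q^{-1}\bigl(\int_\lambda \omega\bigr)$ for any nonzero holomorphic $1$-form $\omega$ on $C_F$. This is independent of the scaling of $\omega$, is visibly additive, and extends $\mathbb{R}$-linearly. Choosing $\omega$ to vary continuously with $F$ locally then makes continuity in $(F,q)$ immediate.
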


This lemma is similar to Lemma 5.5 in \cite{B_Chen}.

\begin{proof}
Let $p_{\mathrm{sext} }: \tilde{\mathcal{X}}_\mathrm{sext} \rightarrow \mathcal{X}$ be the covering map of $\tilde{\mathcal{X}}_\mathrm{sext}$, then $p_{\mathrm{sext}}^*E$ is a bundle over $\tilde{\mathcal{X}}_\mathrm{sext}$ whose fiber at point $(F,q)$ is $C_F$.

We first claim that $\phi_{\mathrm{sext}}$ can be decomposed up to homotopy as follows: 

$$ \tilde{\mathcal{X}}_\mathrm{sext} \xrightarrow{\phi_E}  B\mathrm{Diffeo}^+( T^2,q ) \xrightarrow{\tau} B \mathrm{SL}_2(\mathbb{R}), $$
where $B \mathrm{Diffeo}^+( T^2,q )$ is the group of orientation-preserving diffeomorphisms of the torus $T^2$ that fix a point $q \in T^2$. For two maps, $\phi_E$ is the classifying map of the smooth orientable torus bundle $p_{\mathrm{sext}}^*E$, and $\tau$ is induced by the map $ \mathrm{Diffeo}^+( T^2,q ) \rightarrow \mathrm{SL}_2(\mathbb{R})$ obtained by taking the derivative of a diffeomorphism at point $q$. Indeed, we have this decomposition because while the fiber of $p_{\mathrm{sext}}^*E$ at point $(F,q)$ is $C_F$, the fiber of $\xi_{\mathrm{sext}}$ at point $(F,q)$ is exactly the tangent space $T_qC_F$.

By Theorem 1 in \cite{EE}, Diffeo$_0^+ ( T^2,q ) $, the connected component of Diffeo$^+ (T^2, q)$ that contains identity, is contractible. The mapping class group of the torus, which is defined as Diffeo$^+( T^2,q )/$Diffeo$_0^+( T^2,q )$, is $\mathrm{SL}_2(\mathbb{Z})$ (see \textit{e.g.} Theorem 2.5, \cite{Farbbook} for a proof). So we have a homotopy equivalence between $B$Diffeo$^+( T^2,q )$ and $B\mathrm{SL}_2(\mathbb{Z})$. Now the decomposition becomes

$$ \tilde{\mathcal{X}}_\mathrm{sext} \xrightarrow{\psi_{\mathrm{sext}}}  B\mathrm{SL}_2(\mathbb{Z}) \xrightarrow{\iota} B \mathrm{SL}_2(\mathbb{R}). $$

Again by Theorem 2.5 in \cite{Farbbook}, two diffeomorphisms are isotopic if and only if their actions on $H_1(T^2; \mathbb{Z})$ are the same, so the map that maps a diffeomorphism to its action on $H_1(T^2; \mathbb{Z})$ is a homotopy equivalence between $B\mathrm{Diffeo}^+( T^2,q ) $ and $B\mathrm{SL}_2(\mathbb{Z})$. As a result, $(\psi_\mathrm{sext})_* : \pi_1( \tilde{\mathcal{X}}_\mathrm{sext} ) \rightarrow \mathrm{SL}_2(\mathbb{Z}) $ conicides with the monodromy representation $\rho$ of $\pi_1( \tilde{\mathcal{X}}_\mathrm{sext} )$. Moreover, since $\Gamma_{\mathrm{sext}}$ is not the whole $\mathrm{SL}_2(\mathbb{Z})$, we can further decompose $\psi_\mathrm{sext}$ as follows:

$$ \tilde{\mathcal{X}}_\mathrm{sext} \xrightarrow{P_{\mathrm{sext}}} B\Gamma_{\mathrm{sext}} \xrightarrow{i} B\mathrm{SL}_2(\mathbb{Z}),  $$
such that $(P_{\mathrm{sext}})_*: \pi_1(\tilde{\mathcal{X}}_\mathrm{sext}) \rightarrow \Gamma_{\mathrm{sext}}$ coincides with $\rho$, and $i$ is induced by the inclusion $\Gamma_{\mathrm{sext}} \rightarrow \mathrm{SL}_2(\mathbb{Z})$.

Finally, we claim that the composition $\mathrm{SL}_2(\mathbb{Z}) \rightarrow \mathrm{Diffeo}^+ (T^2, q) \rightarrow \mathrm{SL}_2(\mathbb{R})$ is homotopic to the standard inclusion. Indeed, every connected component in $\mathrm{Diffeo}^+ (T^2, q)$ contains a unique element in the form
$$ T^2 \rightarrow T^2, \ \ \begin{bmatrix}
x \\ y
\end{bmatrix} \mapsto G  \begin{bmatrix}
x \\ y
\end{bmatrix},$$ 
where $x,y \in S^1 \cong \mathbb{R}/\mathbb{Z}$ and $G \in \mathrm{SL}_2(\mathbb{Z})$. Its derivative at any point is $G$. Hence, the composition $\mathrm{SL}_2(\mathbb{Z}) \rightarrow \mathrm{Diffeo}^+ (T^2, q) \rightarrow \mathrm{SL}_2(\mathbb{R})$ is the standard inclusion up to isotopy, and $\iota : B\mathrm{SL}_2(\mathbb{Z}) \rightarrow B\mathrm{SL}_2(\mathbb{R})$ is induced by the standard inclusion.

\end{proof}

\begin{lemma}
$P_{\mathrm{sext}}$ induces an injective map $P_{\mathrm{sext}}^* : H^2(B\Gamma_{\mathrm{sext}} ; \mathbb{Z}) \rightarrow H^2(\tilde{\mathcal{X}}_\mathrm{sext}; \mathbb{Z})$.

\end{lemma}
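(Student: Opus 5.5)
We want $P_{\mathrm{sext}}^*$ injective on $H^2(-;\mathbb{Z})$. The plan follows Banerjee--Chen's treatment of $\tilde{\mathcal{X}}_{\mathrm{flex}}$: factor $P_{\mathrm{sext}}$ through $B\pi_1(\tilde{\mathcal{X}}_{\mathrm{sext}})$ and treat the two factors separately. Write $\pi_1(\tilde{\mathcal{X}}_{\mathrm{sext}}) = \Gamma_{\mathrm{sext}} \times Z(K)$ with $Z(K)\cong \mathbb{Z}/3\mathbb{Z}$, as established in the proof of (b) of Lemma \ref{trivial_braid}. Let $c:\tilde{\mathcal{X}}_{\mathrm{sext}} \to B\pi_1(\tilde{\mathcal{X}}_{\mathrm{sext}})$ be the classifying map of the universal cover. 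Since $(P_{\mathrm{sext}})_* = \rho$ and $\rho$ kills $Z(K)$, we get, up to homotopy,
\[ P_{\mathrm{sext}} = Bq \circ c , \qquad q: \Gamma_{\mathrm{sext}} \times Z(K) \to \Gamma_{\mathrm{sext}} \]
the projection. It then suffices to show that $(Bq)^*$ and $c^*$ are both injective on $H^2(-;\mathbb{Z})$.

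For $(Bq)^*$ this is formal. The inclusion $\Gamma_{\mathrm{sext}} \to \Gamma_{\mathrm{sext}} \times Z(K)$, $g\mapsto (g,0)$, is a section of $q$. Hence $Bq$ has a right homotopy inverse, and $(Bq)^*$ is split injective on all cohomology.

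For $c^*$, the standard fact is that for any connected CW complex $Y$, the map $c: Y \to B\pi_1(Y)$ induces an isomorphism on $H^1$ and an injection on $H^2$. This holds for any coefficients, since $B\pi_1(Y)$ is obtained from $Y$ by attaching cells of dimension $\geq 3$. Applying this with $Y = \tilde{\mathcal{X}}_{\mathrm{sext}}$ gives $c^*$ injective on $H^2(-;\mathbb{Z})$. We need $Y$ to have the homotopy type of a CW complex; this is true because $\tilde{\mathcal{X}}_{\mathrm{sext}}$ is a finite cover of the smooth quasi-projective variety $\mathcal{X}$. Composing the two injections gives the lemma.

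The main obstacle is making the factorization precise. Any two maps $\tilde{\mathcal{X}}_{\mathrm{sext}} \to B\Gamma_{\mathrm{sext}}$ inducing the same homomorphism on $\pi_1$ are homotopic, because $B\Gamma_{\mathrm{sext}}$ is aspherical. I would check that the basepoint and conjugacy ambiguity does not matter: free homotopy classes correspond to conjugacy classes of homomorphisms, and injectivity is unaffected by conjugation. If one wants to avoid the CW-attachment argument, a fallback is the Hopf exact sequence
\[ H_2(Y) \to H_2(B\pi_1 Y) \to 0 , \]
that is, surjectivity of $H_2(c)$, together with the universal coefficient theorem. Here one must check compatibility of the $\mathrm{Ext}(H_1,\mathbb{Z})$ terms, which works because $c$ is an isomorphism on $H_1$.
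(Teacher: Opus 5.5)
Your proof is correct, but it takes a different route from the paper.

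\textbf{The paper's route.} The paper takes the cover $Y$ of $\tilde{\mathcal{X}}_{\mathrm{sext}}$ associated to $Z(K)$. It regards $Y \to \tilde{\mathcal{X}}_{\mathrm{sext}} \to B\Gamma_{\mathrm{sext}}$ as a homotopy fibration whose projection is $P_{\mathrm{sext}}$, and reads injectivity off the Serre spectral sequence. The only differential into $E_2^{2,0}$ comes from $E_2^{0,1}=H^0(B\Gamma_{\mathrm{sext}};H^1(Y;\mathbb{Z}))$. This group vanishes because $H^1(Y;\mathbb{Z})=\mathrm{Hom}(\mathbb{Z}/3\mathbb{Z},\mathbb{Z})=0$.

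\textbf{Your route.} You factor $P_{\mathrm{sext}}$ through $B\pi_1(\tilde{\mathcal{X}}_{\mathrm{sext}})$ and combine two facts. The first is the Hopf-type fact that $H^2(B\pi_1 Y)\hookrightarrow H^2(Y)$, since only cells of dimension $\geq 3$ are attached. The second is split injectivity of $(Bq)^*$. This avoids spectral sequences and works for arbitrary coefficients.

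\textbf{What each approach needs.} Your argument rests on the splitting $\pi_1(\tilde{\mathcal{X}}_{\mathrm{sext}})\cong\Gamma_{\mathrm{sext}}\times Z(K)$. That splitting is true given the split exact sequence of Dolgachev--Libgober together with $H^1(\mathrm{SL}_2(\mathbb{Z});(\mathbb{Z}/3\mathbb{Z})^2)=0$. Centrality of $Z(K)$ by itself, which is all the paper explicitly cites at that point, would not give a direct product. The paper's argument needs no splitting at all, only that the kernel $Z(K)$ of $\rho$ is finite.

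\textbf{Removing the dependence on the product.} If you want to drop the product decomposition, replace your splitting step by the five-term inflation--restriction sequence
$H^1(Z(K);\mathbb{Z})^{\Gamma_{\mathrm{sext}}}\to H^2(\Gamma_{\mathrm{sext}};\mathbb{Z})\to H^2(\pi_1(\tilde{\mathcal{X}}_{\mathrm{sext}});\mathbb{Z})$.
Its first term is $0$, so inflation is injective. This is exactly the group-level counterpart of the paper's vanishing $E_2^{0,1}=0$.
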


This lemma is similar to Lemma 5.6 in \cite{B_Chen}.

\begin{proof}
Recall that by Corollary 3.13 in \cite{B_Chen} (or see Proposition \ref{sext} in this paper), $$\pi_1(\tilde{\mathcal{X}}_\mathrm{sext}) / Z(K) \cong \Gamma_{\mathrm{sext}},$$
where the isomorphism is the monodromy action $\rho$. 

Let $Y$ be the cover of $\tilde{\mathcal{X}}_\mathrm{sext}$ associated to $Z(K) \subset \pi_1(\tilde{\mathcal{X}}_\mathrm{sext})$. Then $\pi_1(Y) = Z(K)$ and the deck group is isomorphic to $\Gamma_{\mathrm{sext}}$. This cover induces the following bundle: 
$$ Y \rightarrow Y \times_{\Gamma_\mathrm{sext}} E\Gamma_\mathrm{sext} \rightarrow B\Gamma_\mathrm{sext}$$
where the projection map in the bundle is a covering map, and the projection map induces a map on homotopy classes, from $ 
\pi_1( Y \times_{\Gamma_\mathrm{sext}} E\Gamma_\mathrm{sext} ) \rightarrow \pi_1( B\Gamma_\mathrm{sext} )$, which coincides with the monodromy action $\rho$. Moreover, since $E\Gamma_\mathrm{sext}$ is contracible, $Y \times_{\Gamma_\mathrm{sext}} E\Gamma_\mathrm{sext}$ is homotopic equivalent to $Y/\Gamma_\mathrm{sext} = \tilde{\mathcal{X}}_\mathrm{sext}$. Therefore, we have the following homotopy fibration:

$$ Y \rightarrow  \tilde{\mathcal{X}}_\mathrm{sext} \rightarrow B\Gamma_\mathrm{sext} $$

The projection map in this homotopy fibration is homotopic to $P_{\mathrm{sext}}$, since they induce the same map on fundamental groups. Now consider the Serre spectral sequence of this homotopy fibration:

$$ E_2^{p,q} = H^p ( B\mathrm{SL}_2(\mathbb{Z}) ; H^q (Y; \mathbb{Z}) ) \Rightarrow  H^{p+q}  (\tilde{\mathcal{X}}_\mathrm{sext} ; \mathbb{Z})$$

In particular,  $P_{\mathrm{sext}}^* : H^2(B\Gamma_{\mathrm{sext}} ; \mathbb{Z}) \rightarrow H^2(\tilde{\mathcal{X}}_\mathrm{sext}; \mathbb{Z})$ can be decomposed as follows:

$$ H^2 (B\mathrm{SL}_2(\mathbb{Z}) ; \mathbb{Z}) = E_2^{2,0} \twoheadrightarrow E_{\infty}^{2,0} \hookrightarrow H^2(\tilde{\mathcal{X}}_\mathrm{sext}; \mathbb{Z}) $$

Here $E_2^{2,0} \twoheadrightarrow E_{\infty}^{2,0}$ is the edge morphism, and $E_{\infty}^{2,0} \hookrightarrow H^2(\tilde{\mathcal{X}}_\mathrm{sext}; \mathbb{Z})$ is the inclusion. To show that $P_{\mathrm{sext}}^*$ is injective, we only need to show that the edge morphism $E_2^{2,0} \twoheadrightarrow E_{\infty}^{2,0}$ is an isomorphism.

The only possibly nontrivial differential into the $(2,0)$ entry is $d_2 : E_2^{0,1} \rightarrow  E_2^{2,0}$. We have $E_2^{0,1} = H^0(B \mathrm{SL}_2(\mathbb{Z}) ; H^1(Y ; \mathbb{Z}))$, where 
$$ H^1(Y ; \mathbb{Z}) = \mathrm{Hom}(\pi_1(Y); \mathbb{Z}) =  \mathrm{Hom}(\mathbb{Z}/3\mathbb{Z}; \mathbb{Z}) =0,$$ 
so $E_2^{0,1}=0$. Therefore, the edge morphism $E_2^{2,0} \twoheadrightarrow E_{\infty}^{2,0}$ is indeed an isomorphism.

\end{proof}

\begin{lemma}[Lemma 5.7, \cite{B_Chen}]
$\iota$ induces a surjective map $\iota^* : H^2(B\mathrm{SL}_2(\mathbb{R}); \mathbb{Z}) \rightarrow H^2( B\mathrm{SL}_2(\mathbb{Z}); \mathbb{Z})$.
\end{lemma}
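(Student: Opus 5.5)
We would prove that $\iota^*$ is surjective by computing both groups and checking that the Euler class restricts to a generator. Since $\mathrm{SL}_2(\mathbb{R})$ deformation retracts onto $\mathrm{SO}(2)\cong S^1$, we have $B\mathrm{SL}_2(\mathbb{R})\simeq B\mathrm{SO}(2)\simeq\mathbb{CP}^\infty$. Hence $H^2(B\mathrm{SL}_2(\mathbb{R});\mathbb{Z})\cong\mathbb{Z}$, generated by the universal Euler class $e$ (equivalently, the first Chern class of the tautological line bundle). It therefore suffices to show that $\iota^*(e)$ generates $H^2(B\mathrm{SL}_2(\mathbb{Z});\mathbb{Z})=H^2(\mathrm{SL}_2(\mathbb{Z});\mathbb{Z})$.

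For the target we use the amalgam decomposition $\mathrm{SL}_2(\mathbb{Z})\cong \mathbb{Z}/4 *_{\mathbb{Z}/2}\mathbb{Z}/6$. Here $\mathbb{Z}/4$ is generated by $S=\begin{bmatrix}0&-1\\1&0\end{bmatrix}$, $\mathbb{Z}/6$ is generated by $ST$ (an element of order $6$), and $\mathbb{Z}/2=\{\pm I\}$. This decomposition gives a Mayer--Vietoris sequence in group cohomology:
\[ H^1(\mathbb{Z}/2;\mathbb{Z}) \to H^2(\mathrm{SL}_2(\mathbb{Z});\mathbb{Z}) \to H^2(\mathbb{Z}/4;\mathbb{Z})\oplus H^2(\mathbb{Z}/6;\mathbb{Z}) \to H^2(\mathbb{Z}/2;\mathbb{Z}). \]
For a finite group $G$ we have $H^1(G;\mathbb{Z})=\mathrm{Hom}(G,\mathbb{Z})=0$, so the restriction map $H^2(\mathrm{SL}_2(\mathbb{Z});\mathbb{Z})\to \mathbb{Z}/4\oplus\mathbb{Z}/6$ is injective. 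Its image is the set of pairs with equal restriction to $\mathbb{Z}/2$, i.e.\ the kernel of the difference map $\mathbb{Z}/4\oplus\mathbb{Z}/6\to\mathbb{Z}/2$, which is a group of order $12$ and is cyclic. This recovers the known answer $H^2(\mathrm{SL}_2(\mathbb{Z});\mathbb{Z})\cong\mathbb{Z}/12$, which also follows from $H_1=\mathbb{Z}/12$, $H_2=0$ and the universal coefficient theorem.

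Next we compute the two restrictions of $\iota^*(e)$. Each finite cyclic subgroup $\mathbb{Z}/k\subset \mathrm{SL}_2(\mathbb{Z})$ with $k=4,6$ is conjugate in $\mathrm{SL}_2(\mathbb{R})$ into $\mathrm{SO}(2)$. Conjugation induces maps on classifying spaces homotopic to the identity, so the restriction of $e$ to $\mathbb{Z}/k$ equals the first Chern class of a faithful character $\mathbb{Z}/k\to U(1)$. Under $H^2(\mathbb{Z}/k;\mathbb{Z})\cong\mathrm{Hom}(\mathbb{Z}/k,U(1))\cong \mathbb{Z}/k$ (via the Bockstein), a faithful character corresponds to a generator. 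So $\iota^*(e)$ restricts to generators of both $\mathbb{Z}/4$ and $\mathbb{Z}/6$, and its image under the injection above is a pair $(u,v)$ of generators.

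Such a pair has order $\mathrm{lcm}(4,6)=12$, so $\iota^*(e)$ generates the cyclic group $H^2(\mathrm{SL}_2(\mathbb{Z});\mathbb{Z})\cong\mathbb{Z}/12$, and $\iota^*$ is surjective. The main care point is the identification of the restricted Euler class with a faithful character's Chern class, including signs and orientation. The sign does not matter, however, since any generator of each cyclic factor gives a pair of order $12$.
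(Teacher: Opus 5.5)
Your proof is correct. The paper gives no argument of its own for this statement: it quotes it from \cite{B_Chen} and uses it only to know that $\iota^*(e)$ generates $H^2(B\mathrm{SL}_2(\mathbb{Z});\mathbb{Z})\cong\mathbb{Z}/12$, in the proof of Proposition~\ref{nonzero_euler_class}. Your proposal therefore replaces a citation with a self-contained argument built from standard inputs:
\begin{itemize}
\item $B\mathrm{SL}_2(\mathbb{R})\simeq B\mathrm{SO}(2)$;
\item the amalgam $\mathbb{Z}/4*_{\mathbb{Z}/2}\mathbb{Z}/6$, or alternatively $H_1=\mathbb{Z}/12$ and $H_2=0$;
\item conjugacy of finite subgroups of $\mathrm{SL}_2(\mathbb{R})$ into $\mathrm{SO}(2)$, together with $Bc_g\simeq\mathrm{id}$;
\item $H^2(\mathbb{Z}/k;\mathbb{Z})\cong\mathrm{Hom}(\mathbb{Z}/k,U(1))$ via $c_1$.
\end{itemize}
Each step is sound, and you are right that the sign ambiguity is harmless.

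First remark: you only assert that the kernel of $\mathbb{Z}/4\oplus\mathbb{Z}/6\to\mathbb{Z}/2$ has order $12$ and is cyclic. Justifying this needs two facts. Both restrictions to $H^2(\{\pm I\};\mathbb{Z})$ are onto, because a faithful character of $\langle S\rangle$ or $\langle ST\rangle$ sends $-I=S^2=(ST)^3$ to $-1$. And $(1,1)$ has order $12$.

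In fact the Mayer--Vietoris injectivity is not needed at all. Restriction is a homomorphism, and $\iota^*(e)$ restricts to elements of orders $4$ and $6$. So its order is divisible by $\mathrm{lcm}(4,6)=12$. Once you know $H^2(\mathrm{SL}_2(\mathbb{Z});\mathbb{Z})\cong\mathbb{Z}/12$ from the universal coefficient theorem, this already forces $\iota^*(e)$ to be a generator.

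Second remark: the local viewpoint also handles the paper's next step without the transfer. $\Gamma_{\mathrm{sext}}$ contains the order-$4$ element $\begin{bmatrix}1&-1\\2&-1\end{bmatrix}$, which is $\equiv\begin{bmatrix}1&1\\0&1\end{bmatrix}$ mod $2$. Your argument then shows that $i^*\iota^*(e)$ restricts to a generator of $H^2(\mathbb{Z}/4;\mathbb{Z})$. Hence its order is divisible by $4$, which is exactly the conclusion the paper reaches via the transfer in proving $e(\xi_{\mathrm{sext}})\neq 0$.
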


Finally, we prove  Proposition \ref{nonzero_euler_class}.

\begin{proof}[Proof of Proposition \ref{nonzero_euler_class}]

By Lemma \ref{can_decompose}, the classifying map $\phi_\mathrm{sext}$ is homotopic to $\iota \circ i \circ P_{\mathrm{sext}}$, and $e(\xi_\mathrm{sext}) = P_\mathrm{sext}^*  \circ  i^*  \circ \iota^*(e) $, in which $\iota^*$ is surjective by Lemma 5.4, and $ P_\mathrm{sext}^* $ is injective by Lemma 5.5. 

Consider $i^*: H^2(B\mathrm{SL}_2(\mathbb{Z}) ; \mathbb{Z} ) \rightarrow H^2(B\Gamma_\mathrm{sext} ; \mathbb{Z})$. Since $\Gamma_\mathrm{sext}$ is a index 3 subgroup of $\mathrm{SL}_2(\mathbb{Z})$ (see Propsition \ref{sext}),  $B\Gamma_\mathrm{sext}$ is a 3-fold covering map of $B\mathrm{SL}_2(\mathbb{Z})$  and we have a transfer homomorphism $i^!: H^2(B\Gamma_\mathrm{sext} ; \mathbb{Z}) \rightarrow H^2(B\mathrm{SL}_2(\mathbb{Z}) ; \mathbb{Z} ) $, which satisfies that $i^! \circ i^*$ is a multiplication by 3. 

Now we compute $e(\xi_\mathrm{sext}) = P_\mathrm{sext}^*  \circ  i^*  \circ \iota^*(e)$, in which $e$ is a generator of $H^2( B\mathrm{SL}_2(\mathbb{R}) ; \mathbb{Z}) \cong \mathbb{Z}$. Because $\iota^*$ is surjective, $\iota^*(e)$ is a generator of \newline $H^2(B\mathrm{SL}_2(\mathbb{Z}) ; \mathbb{Z} ) \cong \mathbb{Z}/12\mathbb{Z}$, and is a 12-torsion. Because $i^! \circ i^*$ is a multiplication of 3, $i^*$ maps a 12-torsion to either a 12-torsion or a 4-torsion, so $i^* \iota^*(e)$ has order 4 or 12. Because $P_\mathrm{sext}^*$ is injective, $e(\xi_\mathrm{sext})$ has order 4 or 12, and is nonzero.

\end{proof}

Now that we have proved Proposition \ref{nonzero_euler_class}, we have completed the proof of Proposition \ref{one_copy}.

\section{General theorems and list of lower degree virtual sections}

In 6.1, we prove Theorem 1.2, 1.3 and 1.4. In 6.2, we make a summary of the work in this paper, including a list of virtual sections up to degree 99.

For readers who have skipped some sections, it is recommended to read the statements of Proposition \ref{m=1} and \ref{one_copy}, since these two propositions will be used several times in the proofs of Theorem 1.2, 1.3 and 1.4. One can find these two propositions at the beginning of Section 4 and 5, respectively. 

\subsection{Proofs of Theorem 1.2, 1.3, 1.4}

Before we prove the theorems listed in the introduction, let us start with a lemma that helps us with the cases where $\Gamma_n$ does not contain $-I$.

\begin{lemma}
Suppose $\Gamma_n$ is an index $m$ subgroup of $\mathrm{SL}_2(\mathbb{Z})$  that does not contain $-I$. Then 4 divides $m$.
\end{lemma}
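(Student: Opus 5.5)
The plan is to compare $\Gamma_n$ with its image in $\mathrm{PSL}_2(\mathbb{Z})$ and then count finite-order elements via the action of $\mathrm{PSL}_2(\mathbb{Z}) \cong \mathbb{Z}/2 * \mathbb{Z}/3$ on cosets.

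First I will reduce to $\mathrm{PSL}_2(\mathbb{Z})$. Since $-I \notin \Gamma_n$, the projection $\mathrm{SL}_2(\mathbb{Z}) \to \mathrm{PSL}_2(\mathbb{Z})$ restricts to an injection on $\Gamma_n$. Let $\bar\Gamma$ be the image. The preimage of $\bar\Gamma$ is $\Gamma_n \times \{\pm I\}$, which has index $m/2$ in $\mathrm{SL}_2(\mathbb{Z})$. So $m = 2m'$ with $m' = [\mathrm{PSL}_2(\mathbb{Z}):\bar\Gamma]$, and it remains to show that $m'$ is even.

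Next I will use that $\Gamma_n$ contains no element of order $4$. Every order-$4$ element $g$ of $\mathrm{SL}_2(\mathbb{Z})$ satisfies $g^2 = -I$, since its eigenvalues are $\pm i$. Hence $\Gamma_n$ has no element of order $4$. Consequently $\bar\Gamma$ contains no element of order $2$: if $\bar g$ had order $2$, its lift $g \in \Gamma_n$ would satisfy $g^2 = \pm I$. Now $g^2 = -I$ is excluded, and $g^2 = I$ with $g \neq \pm I$ is impossible in $\mathrm{SL}_2(\mathbb{Z})$, because such $g$ would have eigenvalues $\pm 1$ with determinant $1$, forcing $g = \pm I$.

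The key step is to let the order-$2$ generator $S$ of $\mathrm{PSL}_2(\mathbb{Z})$ act on the coset space $\mathrm{PSL}_2(\mathbb{Z})/\bar\Gamma$, which has $m'$ elements. A fixed coset $x\bar\Gamma$ would give $x^{-1}Sx \in \bar\Gamma$, an element of order $2$, which is impossible. So $S$ acts freely as an involution, and therefore $m'$ is even. It follows that $4 \mid m$.

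This argument is short. The only point needing care is the claim that all order-$2$ elements of $\mathrm{PSL}_2(\mathbb{Z})$ are conjugate to $S$. I would take this from the free product structure, or equivalently from the classification of finite-order elements of $\mathrm{SL}_2(\mathbb{Z})$: every element of order $4$ is conjugate to $\pm\begin{bmatrix} 0 & -1 \\ 1 & 0 \end{bmatrix}$. With that in hand, freeness of the action shows that no conjugate of $S$ lies in $\bar\Gamma$.
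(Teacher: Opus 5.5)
Your proof is correct and is essentially the paper's argument. The paper lets an order-$4$ element $a$ (so $a^2=-I$) act by left multiplication on $\mathrm{SL}_2(\mathbb{Z})/\Gamma_n$ and shows every orbit has exactly $4$ elements; you split this into the free action of $a^2=-I$ (your passage to $\mathrm{PSL}_2(\mathbb{Z})$, giving $m=2m'$) and then the free action of $\bar a=S$ on $\mathrm{PSL}_2(\mathbb{Z})/\bar\Gamma$. Your closing concern is unnecessary: conjugation preserves order and you have already shown $\bar\Gamma$ has no elements of order $2$, so no conjugate of $S$ lies in $\bar\Gamma$, and neither the classification of finite-order elements nor the free product decomposition is needed.
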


\begin{proof}
Since the only 2-torsion in $\mathrm{SL}_2(\mathbb{Z})$ is $-I$, any 4-torsion $a$ in $\mathrm{SL}_2(\mathbb{Z})$ has $a^2 = -I$. Therefore, any 4-torsion $a \notin \Gamma_n$ since $-I \notin \Gamma_n$. 

Consider the action of $\mathrm{SL}_2(\mathbb{Z})$ on the cosets of $\Gamma_n$ obtained by left multiplication, then $x \cdot \Gamma_n \rightarrow ax \cdot \Gamma_n$ is a permutation on the cosets. We have $ (a^4x)  \Gamma_n =  x  \Gamma_n$ for any coset $x \Gamma_n$. Furthermore, if $(a^i x)  \Gamma_n =  (a^j x) \Gamma_n$, then $x^{-1}a^{i-j}x \in \Gamma_n$. Since $x^{-1}ax$ is also a 4-torsion, if $x^{-1}a^{i-j}x \in \Gamma_n$, then $i-j$ is divisible by 4. Therefore, each orbit of the permutation on cosets $x  \Gamma_n \mapsto  (ax) \Gamma_n$ is a 4-cycle. We then know that the number of cosets is divisible by 4.
\end{proof}

We also have the following proposition:

\begin{proposition}[Classification of components in $\tilde{\mathcal{X}}_n$] \label{classify}
Any virtual section $\tilde{\mathcal{X}}_n$ is a disjoint union of three virtual sections $\tilde{\mathcal{X}}^N,\tilde{\mathcal{X}}^F,\tilde{\mathcal{X}}^S$, where:

\begin{enumerate}
    \item Every connected component of $\tilde{\mathcal{X}}^N$, denoted by $\tilde{\mathcal{X}}_m$, has $\Gamma_m $ not containing $-I$. The degree of $\tilde{\mathcal{X}}^N$ can be divided by $36$.
    
    \item $\tilde{\mathcal{X}}^F$ is an empty set or a connected component homotopic to $\tilde{\mathcal{X}}_\mathrm{flex}$. The degree of $\tilde{\mathcal{X}}^F$ is $0$ or $9$.
    
    \item $\tilde{\mathcal{X}}^S$ is an empty set or a connected component homotopic to $\tilde{\mathcal{X}}_\mathrm{sext}$. The degree of $\tilde{\mathcal{X}}^S$ is $0$ or $27$.
\end{enumerate}

\end{proposition}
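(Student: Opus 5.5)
We decompose $\tilde{\mathcal{X}}_n$ into its connected components and sort them according to whether the monodromy group contains $-I$. By Section 2, each connected component $\tilde{\mathcal{X}}_{m}$ of $\tilde{\mathcal{X}}_n$, equipped with the restrictions of $p_n$ and $s_n$, is itself a connected (injective) virtual section. Its monodromy group $\Gamma_{m}=\rho(\pi_1(\tilde{\mathcal{X}}_m)/Z(K))$ is a finite index subgroup of $\mathrm{SL}_2(\mathbb{Z})$ of index $m/9$, by Corollary 3.12 of \cite{B_Chen}. We let $\tilde{\mathcal{X}}^N$ be the union of the components with $-I\notin\Gamma_m$. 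The remaining components, those with $-I\in\Gamma_m$, are handled by Proposition \ref{m=1}: each of them is homotopic to $\tilde{\mathcal{X}}_\mathrm{flex}$ or to $\tilde{\mathcal{X}}_\mathrm{sext}$. We let $\tilde{\mathcal{X}}^F$ (resp. $\tilde{\mathcal{X}}^S$) be the union of those homotopic to $\tilde{\mathcal{X}}_\mathrm{flex}$ (resp. $\tilde{\mathcal{X}}_\mathrm{sext}$).

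These two classes are disjoint. The degrees differ, $9\neq 27$, and a homotopy of virtual sections includes an isomorphism of covers, which preserves degree. A disjoint union of components, with the restricted maps, is again a virtual section, so all three pieces are virtual sections whose union is $\tilde{\mathcal{X}}_n$.

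Next we bound the number of components of each type. Proposition \ref{one_copy} says that $\tilde{\mathcal{X}}_n$ contains at most one component homotopic to $\tilde{\mathcal{X}}_\mathrm{flex}$ and at most one homotopic to $\tilde{\mathcal{X}}_\mathrm{sext}$. Hence $\tilde{\mathcal{X}}^F$ is empty or a single component of degree $9$, and $\tilde{\mathcal{X}}^S$ is empty or a single component of degree $27$. The degree here is the number of sheets, which is invariant under isomorphism of covers.

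For $\tilde{\mathcal{X}}^N$, consider a component $\tilde{\mathcal{X}}_m$ with $-I\notin\Gamma_m$. The index $[\mathrm{SL}_2(\mathbb{Z}):\Gamma_m]=m/9$, and the preceding lemma (index of a subgroup not containing $-I$) gives $4\mid m/9$. Hence $36\mid m$. Summing over the components gives $36\mid\deg\tilde{\mathcal{X}}^N$; an empty $\tilde{\mathcal{X}}^N$ has degree $0$, which is also divisible by $36$.

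The main point needing care is the identification of the index of $\Gamma_m$ with $m/9$ for each component. We take this directly from Corollary 3.12 of \cite{B_Chen}, applied to the component regarded as a connected virtual section. Everything else is bookkeeping on top of Propositions \ref{m=1} and \ref{one_copy}.
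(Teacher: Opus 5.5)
Your proposal is correct and follows the paper's proof essentially step for step. You sort components by whether $-I\in\Gamma_m$, apply Proposition \ref{m=1} and Proposition \ref{one_copy} to those containing $-I$, and use the index lemma, with index $m/9$ from Banerjee--Chen, for $\tilde{\mathcal{X}}^N$. The only cosmetic difference is that the paper splits the $-I\in\Gamma_m$ components by whether $[s_m]$ is zero (inflection-cased versus sextatic-cased), rather than by the conclusion of Proposition \ref{m=1} plus a degree count, which amounts to the same thing.
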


\begin{proof}
Let $\tilde{\mathcal{X}}^N$ be the union of all connected components $\tilde{\mathcal{X}}_{9k}$ of $\tilde{\mathcal{X}}_n$ whose $\Gamma_{9k}$ does not contain $-I$;

$\tilde{\mathcal{X}}^F$ be the union of all connected components $\tilde{\mathcal{X}}_{9k}$ of $\tilde{\mathcal{X}}_n$ whose $\Gamma_{9k}$ contains $-I$, and $[s_{9k}] = 0 \in H^1(\tilde{\mathcal{X}}_{9k}; \mathbb{Z}^2)$;

$\tilde{\mathcal{X}}^S$ be the union of all connected components $\tilde{\mathcal{X}}_{9k}$ of $\tilde{\mathcal{X}}_n$ whose $\Gamma_{9k}$ contains $-I$, and $[s_{9k}] \neq 0 \in H^1(\tilde{\mathcal{X}}_{9k}; \mathbb{Z}^2)$.

The degree of $\tilde{\mathcal{X}}^N$ is divisible by 36 from Lemma 6.1.

$\tilde{\mathcal{X}}^F$ is a union of inflection-cased virtual sections. By Proposition \ref{m=1}, each component is homotopic to $\tilde{\mathcal{X}}_{\mathrm{flex}}$. By Proposition \ref{one_copy}, $\tilde{\mathcal{X}}^F$ can contain either 0 or 1 component. Hence, $\tilde{\mathcal{X}}^F$ is either an empty set or homotopic to $\tilde{\mathcal{X}}_{\mathrm{flex}}$.

$\tilde{\mathcal{X}}^S$ is a union of sextatic-cased virtual sections. By Proposition \ref{m=1}, each component is homotopic to $\tilde{\mathcal{X}}_{\mathrm{sext}}$. By Proposition \ref{one_copy}, $\tilde{\mathcal{X}}^S$ can contain either 0 or 1 component. Hence, $\tilde{\mathcal{X}}^S$ is either an empty set or homotopic to $\tilde{\mathcal{X}}_{\mathrm{sext}}$.

\end{proof}

Now we are ready to prove the theorems listed in the introduction. 

\vspace{2mm}

\begin{proof}[Proof of Theorem 1.2]
By Proposition \ref{classify} we know that any virtual section with degree 27 is equal to its $\tilde{\mathcal{X}}^S$, so it must be homotopic to $\tilde{\mathcal{X}}_{\mathrm{sext}}$.
\end{proof}

\begin{proof}[Proof of Theorem 1.3]
If $\Gamma_n$ contains $-I$, then by Proposition \ref{m=1}, we know that the virtual section could only be $\tilde{\mathcal{X}}_\mathrm{flex}$ or $\tilde{\mathcal{X}}_\mathrm{sext}$. If $\Gamma_n$ does not contain $-I$, then by Lemma 6.1, $m$ is divisible by 4. and since $n=9m$, we know that 36 divides $n$.
\end{proof}

\begin{proof}[Proof of Theorem 1.4]
By Proposition \ref{classify}, any virtual section is a disjoint union of $\tilde{\mathcal{X}}^N$, $\tilde{\mathcal{X}}^F$ and $\tilde{\mathcal{X}}^S$, where 
the degree of $\tilde{\mathcal{X}}^N$ can be divided by $36$, the degree of $\tilde{\mathcal{X}}^F$ is 0 or 9, and the degree of $\tilde{\mathcal{X}}^S$ is 0 or 27. Therefore, any virtual section can only have degree 36$n$+9, 36$n$+27, or 36$n$. 
\end{proof}

\subsection{Summary: list of virtual sections of degree $\leq 99$}

We end this paper with a summary of answers to Question 1.1. We will make a list of all the known answers and yet unknown cases for virtual sections with degree less than 99.

Firstly, consider the connected virtual sections $ \tilde{\mathcal{X}}_n$ of degree 36 or 72, satisfying $-I \notin \Gamma_n$. The monodromy group $\Gamma_n$ has index 4 or 8. We know that there exist a degree 72 virtual section, which we will denote by $\tilde{\mathcal{X}}_{72}$, that consists of all ``points of type 9" (see our explanations at the introduction for a more explicit definition: notice that $J_2(3)=8 $). To see the all unknown cases, since an index $4k$ subgroup of $\mathrm{SL}_2(\mathbb{Z})$ that does not contain $-I$ uniquely corresponds to an index $2k$ subgroup of $\mathrm{PSL}_2(\mathbb{Z})$, using the Table 6.2 in \cite{Uruburu}, which listed the conjugacy classes of finite index subgroups of $\mathrm{PSL}_2(\mathbb{Z})$ up to index 7, we have that:

\begin{enumerate}
    \item There is one conjugacy class of index 4 subgroups of $\mathrm{SL}_2(\mathbb{Z})$ not containing $-I$, and this conjugacy class contains 
    $$ \Gamma_{36}^?  =  \Bigl \langle \begin{bmatrix}
-1 & -1 \\
1  & 0  
\end{bmatrix}, \begin{bmatrix}
0 & -1 \\
1  & -1  
\end{bmatrix} \Bigr \rangle.$$

    \item There are two conjugacy classes of index 8 subgroups of $\mathrm{SL}_2(\mathbb{Z})$ that do not contain $-I$, one of which contains $\Gamma_{72}$ for the already known virtual section $\tilde{\mathcal{X}}_{72}$, and the other contains 
    $$ \Gamma_{72}^?  =  \Bigl \langle \begin{bmatrix}
-1 & -1 \\
1  & 0  
\end{bmatrix}, \begin{bmatrix}
1 & -1 \\
2  & -1  
\end{bmatrix},  \begin{bmatrix}
1 & -2 \\
1  & -1  
\end{bmatrix} \Bigr \rangle.$$
    
\end{enumerate}

With these preliminaries we can list all the existing virtual sections and unknown cases of virtual sections up to homotopy. In the list,  ``(exist)" means the virtual section is known to exist, and ``(unknown)" means the virtual section may possibly exist, but we do not know specifically whether it exists or not. 

\vspace{1mm}

In the list below, 

$\tilde{\mathcal{X}}_\mathrm{flex}$ means the virtual section consists of all inflection points.

$\tilde{\mathcal{X}}_\mathrm{sext}$ means the virtual section consists of all sextatic points.

$\tilde{\mathcal{X}}_{72}$ means the virtual section consists of all ``points of type 9". 

$\tilde{\mathcal{X}}_{36}^?$ is a virtual section that we do not know whether it exist or not. If it exists, its corresponding monodromy group is the index 4 subgroup $\Gamma_{36}^?$.

$\tilde{\mathcal{X}}_{72}^?$ is a virtual section that we do not know whether it exist or not. If it exists, its corresponding monodromy group is the index 8 subgroup $\Gamma_{72}^?$.

A disjoint union of two connected components, each homotopic to $A$ and $B$ respectively, is denoted as $A \amalg B$.

\vspace{3mm}

$n=9: \tilde{\mathcal{X}}_\mathrm{flex} $ (exist)

$n=18$: nonexist

$n=27: \tilde{\mathcal{X}}_\mathrm{sext} $ (exist)

$n=36: \tilde{\mathcal{X}}_\mathrm{flex} \amalg \tilde{\mathcal{X}}_\mathrm{sext}$ (exist), $\tilde{\mathcal{X}}_{36}^?$ (unknown)

$n=45: \tilde{\mathcal{X}}_\mathrm{flex} \amalg \tilde{\mathcal{X}}_{36}^? $ (unknown)

$n=54:$ nonexist

$n=63: \tilde{\mathcal{X}}_\mathrm{flex} \amalg \tilde{\mathcal{X}}_{36}^?$ (unknown)

$n=72: \tilde{\mathcal{X}}_{72}$ (exist), $\tilde{\mathcal{X}}_\mathrm{flex} \amalg \tilde{\mathcal{X}}_\mathrm{sext} \amalg \tilde{\mathcal{X}}_{36}^?$ (unknown), $\tilde{\mathcal{X}}_{36}^? \amalg \tilde{\mathcal{X}}_{36}^?$ (unknown), $\tilde{\mathcal{X}}_{72}^?$ (unknown)

$n=81: \tilde{\mathcal{X}}_\mathrm{flex} \amalg \tilde{\mathcal{X}}_{72}$ (exist), $\tilde{\mathcal{X}}_\mathrm{flex} \amalg \tilde{\mathcal{X}}_{36}^? \amalg \tilde{\mathcal{X}}_{36}^?$ (unknown), $\tilde{\mathcal{X}}_\mathrm{flex} \amalg \tilde{\mathcal{X}}_{72}^?$ unknown)

$n=90:$ nonexist

$n=99: \tilde{\mathcal{X}}_\mathrm{sext} \amalg \tilde{\mathcal{X}}_{72}$ (exist), $\tilde{\mathcal{X}}_\mathrm{sext} \amalg \tilde{\mathcal{X}}_{36}^? \amalg \tilde{\mathcal{X}}_{36}^?$ (unknown), $\tilde{\mathcal{X}}_\mathrm{sext} \amalg \tilde{\mathcal{X}}_{72}^?$ (unknown)

\vspace{2mm}

The cases $n=9$ and $n=18$ are first proved by Theorem 1.4 and in \cite{B_Chen}. The case $n=27$ is proved by Theorem 1.2 in this paper, and the cases $n=54$ and $n=90$ are proved by Theorem 1.4 in this paper.

By Proposition \ref{classify}, if the connected virtual section $\tilde{\mathcal{X}}_n$ has degree less than 99, its corresponding $\tilde{\mathcal{X}}^N$ of has degree 36 or 72. Also,
\begin{enumerate}
    \item if the degree of $\tilde{\mathcal{X}}^N$ is 36 then $\tilde{\mathcal{X}}^N$ must be homotopic to  $\tilde{\mathcal{X}}_{36}^?$.
    \item if the degree of $\tilde{\mathcal{X}}^N$ is 72 and $\tilde{\mathcal{X}}^N$ is connected, then $\tilde{\mathcal{X}}^N$ is homotopic to either  $\tilde{\mathcal{X}}_{72}$ or $\tilde{\mathcal{X}}_{72}^?$.
    \item if the degree of $\tilde{\mathcal{X}}^N$ is 72 and $\tilde{\mathcal{X}}^N$ is not connected, then $\tilde{\mathcal{X}}^N$ is a disjoint union of two components that are both homotopic to  $\tilde{\mathcal{X}}_{36}^?$.
\end{enumerate}

In addition, $\tilde{\mathcal{X}}^F$ is either an empty set or homotopic to $\tilde{\mathcal{X}}_{\mathrm{flex}}$, and $\tilde{\mathcal{X}}^S$ is either an empty set or homotopic to $\tilde{\mathcal{X}}_{\mathrm{sext}}$. Merging the possibilities for $\tilde{\mathcal{X}}^N, \tilde{\mathcal{X}}^F$ and $\tilde{\mathcal{X}}^S$, we can get the list of all possible cases for $n=36, 45, 63, 72, 81, 99$.

We now explain why Question 1.5 is important. The answer to Question 1.5 is strongly relative to the answer of Question 1.1 for $n = 108k+45$ or $108k+63$, which are the only two cases that remain unknown. If the answer is true, one only needs to show that the section map $s_{36}^?$ of $\tilde{\mathcal{X}}_{36}^?$ can be homotoped off  $s_{\mathrm{flex}}$ or $s_{\mathrm{sext}}$. If the answer for Question 1.5 is false, then we know that there are no virtual sections of degree 45 or 63.

Our method in this paper reaches the limit  when answering Question 1.5. This is because, without the condition $-I \in \Gamma_n$, the cohomology group $H^1( \Gamma_n ; \mathbb{Z}^2 )$ might be very complicated, if not completely incomputable. Moreover, even  $H^1( \Gamma_n ; (\mathbb{Z}/2\mathbb{Z})^2 )$ could be complicated, which means that $\tilde{\mathcal{X}}_n$ is possibly not a covering space of $\tilde{\mathcal{X}}_{\mathrm{flex}}$. We are looking forward to a different method to answer Question 1.5, and furthermore, to classify all the possible connected virtual sections with $-I \notin \Gamma_n$.

\printbibliography

\end{document}